\definecolor{myblue}{HTML}{1E3A5F} \usepackage[
\crefname{equation}{}{}
\Crefname{equation}{Equation}{Equations}
\newtheorem{theorem}{Theorem}[section]
\newtheorem{lemma}[theorem]{Lemma}
\newtheorem{example}{Example}[section]
\newtheorem{assumption}{Assumption}
\Crefname{assumption}{Assumption}{Assumptions}   
\newtheorem{remark}{Remark}
\newtheorem{corollary}{Corollary}
\DeclareMathOperator*{\argmin}{arg\,min}
\DeclareFontFamily{U}{matha}{\hyphenchar\font45}
\DeclareFontShape{U}{matha}{m}{n}{
<-6> matha5 <6-7> matha6 <7-8> matha7
<8-9> matha8 <9-10> matha9
<10-12> matha10 <12-> matha12
}{}
\DeclareSymbolFont{matha}{U}{matha}{m}{n}
\DeclareFontFamily{U}{mathx}{\hyphenchar\font45}
\DeclareFontShape{U}{mathx}{m}{n}{
<-6> mathx5 <6-7> mathx6 <7-8> mathx7
<8-9> mathx8 <9-10> mathx9
<10-12> mathx10 <12-> mathx12
}{}
\DeclareSymbolFont{mathx}{U}{mathx}{m}{n}
\DeclareMathDelimiter{\vvvert} {0}{matha}{"7E}{mathx}{"17}
\DeclarePairedDelimiterX{\normiii}[1]
{\vvvert}
{\vvvert}
{\ifblank{#1}{\:\cdot\:}{#1}}
\newcommand{\rami}[1]{\textcolor{black}{#1}}
\newcommand{\R}{\mathbb{R}}
\newcommand{\fa}{\text{ for all }}
\newcommand{\dd}{\operatorname{d}\!}
\title{A priori error analysis 
of the proximal Galerkin method}
\date{\today}
\author{Brendan Keith$^1$}
\address{$^1$Division of Applied Mathematics, Brown University, Providence, RI 02912}
\email{brendan\_keith@brown.edu,   
 rami\_masri@brown.edu}
\author{Rami Masri$^1$} 
\author{Marius Zeinhofer$^2$}
\address{$^2$Seminar for Applied Mathematics, ETH Z\"urich, Switzerland} 
\email{marius.zeinhofer@math.ethz.ch}
\begin{document}

\begin{abstract}
    The proximal Galerkin (PG) method is a finite element method for solving variational problems with inequality constraints.
    It has several advantages, including constraint-preserving approximations and mesh independence. This paper presents the first abstract \textit{a priori} error analysis of the PG method, providing a general framework to establish convergence and error estimates. As applications of the framework, we demonstrate optimal convergence rates for both the obstacle and Signorini problems using various finite element subspaces.
    
\vspace{1em}
 \smallskip
  \noindent \textit{Key words}. 
 Proximal Galerkin, finite element method, a priori error analysis, pointwise inequality constraint, obstacle problem, Signorini problem, Bregman proximal point.  

  \smallskip 
   
  \noindent \textit{MSC codes.} 35J86, 35R35, 49J40, 65K15, 65N30. 
\end{abstract}
\maketitle

\section{Introduction}

The proximal Galerkin (PG) method \cite{keith2023proximal} plays a dual role, acting both as an algorithm and a discretization scheme for variational problems with pointwise inequality constraints.
It is an algorithm in the sense that it comprises a sequence of operations (i.e., subproblems to be solved) leading to an approximate solution of a variational problem.
It is a discretization scheme as it yields approximations that depend explicitly on chosen finite-dimensional subspaces, thereby providing a broad selection of discretization choices for the target solution.

This paper presents general \textit{a priori} error analysis of the PG method, which has demonstrated competitive efficacy across a range of problems in applied mathematics, including classical obstacle, contact, and elastoplasticity problems \cite{dokken2025latent}.
Prior analyses have focused on specific discretization choices for particular problems or examined the convergence properties of the PG subproblems only after linearization \cite{keith2023proximal,fu2024locally}.
Instead, the present work provides a foundational advancement, dispensing with the analysis of linearized subproblems to arrive at a general analytical framework for PG methods applied to quadratic optimization problems in Sobolev Hilbert spaces with pointwise inequality constraints.

\rami{
To illustrate the flexibility of the proposed framework, we focus on two canonical applications: the obstacle and Signorini problems. Established approaches for these problems include the quadratic penalty method \cite{babuvska1973finite} and ``first-discretize-then-optimize" strategies such as the primal-dual active set \cite{hintermuller2002primal} and augmented Lagrangian methods \cite{glowinski1989augmented}. Each of these techniques faces its own distinct challenges: penalty methods require scaling the penalty parameter inversely with the mesh size to maintain accuracy \cite{scholz1984numerical}, while active set and augmented Lagrangian solvers can exhibit mesh-dependent behavior, which can be mitigated via multigrid techniques \cite{graser2009multigrid,bueler2024full}. There is a vast literature on the error analysis of the aforementioned approaches with various spatial discretization choices.
However, we note that the optimization error is often excluded from the analysis when ``first-discretize-then-optimize" approaches are employed.
For an overview, we refer the interested reader to the textbooks \cite[Chapter 5]{bartels2015numerical} and \cite[Chapters 5 and 6]{Chouly2023} for the obstacle and Signorini problems, respectively.
}

\rami{
In this work, we provide general convergence guarantees for the PG framework with respect to both the number of iterations and the mesh size, as well as the first result establishing its mesh-independent iteration complexity. 
In particular, we emphasize that PG requires no parameters to be sent to singular limits and is asymptotically mesh-independent without multigrid.
We refer the interested reader to \cite{dokken2025latent,papadopoulos2024hierarchical} for numerical experiments comparing PG to primal-dual active set and other popular alternatives.
}
\subsection{Outline}
The remainder of the introduction establishes the basic notation and problem setup, defines the PG method, summarizes the main results, and illustrates example problems.
\Cref{sec:Preliminaries} defines key concepts appearing in the analytical framework, including Legendre functions and Bregman divergence.
\Cref{sec:main_results} rigorously presents the main theoretical results of the paper, including checkable conditions for existence and uniqueness of solutions to the PG subproblems, a rigorous guarantee that the objective function value will decrease monotonically with each iteration, best approximation properties for the PG solution variables, convergence rates, and asymptotic mesh-independence.
\Cref{sec:error_rates,sec:error_rates_sig} are devoted to applications of the theory to the obstacle and Signorini problems, respectively, leading to optimal error convergence rates in each case.
Finally, the paper concludes with \Cref{sec:Conclusion}, where we summarize our findings.

\subsection{Notation} Throughout the article, we let $\Omega \subset \R^n$ ($n = 1,2,3$) be an open bounded Lipschitz domain.  For a given Banach space $V$, we denote by $V'$ its topological dual space with duality pairing $\langle \cdot, \cdot\rangle$.
In particular, a member $F \in V'$ is a continuous linear functional mapping $V$ into $\R$,  $F(v) = \langle F, v \rangle \in \R$. The norm $\|\cdot\|_{V'}$ denotes the usual operator norm.  

We use the standard notation for the Sobolev Hilbert spaces $H^m(\Omega)$ and their vector-valued counterparts $H^m(\Omega; \R^n)$. The space $H^{1/2}(\partial \Omega)$ denotes the canonical trace space of $H^1(\Omega)$ functions onto the boundary $\partial \Omega$ with the quotient norm  $\|\cdot\|_{H^{1/2} (\partial \Omega)}$:  
$$\| \hat v \|_{ H^{1/2}(\partial \Omega)} = \inf_{ \substack{v \in H^1(\Omega) \\ \mathrm{tr} \, v = \hat v} } \|v\|_{H^1(\Omega)}.$$ Here, $\mathrm{tr}$ denotes the trace operator. When the setting is unambiguous, we write $v \vert_{\partial \Omega}$ instead of $\mathrm{tr}(v)$.  We also require the definition of  the Lions--Magenes space on measurable $\Gamma \subset \partial \Omega$ \cite{tartar2007introduction}:  
\begin{equation}
\widetilde{H}^{1/2}(\Gamma) = H^{1/2}_{00}(\Gamma) 
 = \{w \in H^{1/2}(\Gamma) \mid \tilde w \in H^{1/2}(\partial \Omega) \}, \label{eq:LM_space}
\end{equation}
where $\tilde w$ is the extension by zero of $w$ outside of $\Gamma$; i.e., $\tilde w = 0 $ on $\partial \Omega \backslash \overline{\Gamma}$ and $\tilde w = w$ on $\Gamma$.
We note that this space is normed by $\|w\|_{\widetilde{H}^{1/2}(\Gamma)} := \|\tilde w\|_{H^{1/2}(\partial \Omega)}$ and that ${H}^{-1/2}(\Gamma) = (\widetilde{H}^{1/2}(\Gamma))^\prime$.
For non-integer $s$, the notation $H^{s}(\Omega)$ denotes the Sobolev--Slobodeckij spaces \cite[Chapter 2]{ern2017finite}.  We use the notation $(\cdot, \cdot)_{\omega}$ to denote the $L^2(\omega)$-inner product over measurable $\omega \subset \overline{\Omega}$. If $\omega = \Omega$, we drop the subscript and denote the $L^2$-inner product over $\Omega$ by $(\cdot,\cdot)$.

The essential domain of a proper function $f\colon\R^n \rightarrow \R \cup \{+\infty\}$ is given by $\operatorname{dom} f : = \{ x \in \R^n : f(x) < \infty\}$. The Fr\'echet derivative of a mapping $F$ between normed vector spaces $X$ and $Y$ at a point $x$ is denoted by $F'(x)$.
For a linear continuous operator $B \in \mathcal{L}(U,V)$ where $U,V$ are normed vector spaces, the topological transpose (adjoint) operator $B' \in \mathcal{L}(V',U')$ is defined as 
\begin{equation}
\label{eq:DualOperator}
\langle B' v' , u \rangle = \langle v' , B u \rangle  \text{ for all } u \in U, v'  \in V'. 
\end{equation}
  
We consider a conforming affine shape regular simplicial mesh $\mathcal{T}_h$ of $\Omega$ with mesh size $h = \max_{T \in \mathcal{T}_h} h_T$ where $h_T = \operatorname{diam}(T)$. 
Define 
\begin{equation}
\mathbb{P}_q(\mathcal{T}_h) = \{v \in L^\infty(\Omega) \mid v \vert_T \in \mathbb{P}_q(T) \quad ~\fa T \in \mathcal{T}_h \}, 
\end{equation}
where $\mathbb{P}_q(T)$ denotes the space of polynomials of total degree less than or equal to $q$ on $T$.
Denote by $\mathcal{N}_h$ the set of element vertices (nodes) in $\mathcal{T}_h$  and by $\{\varphi_z\}_{z \in \mathcal{N}_h} $ the associated nodal basis functions of polynomial degree 1. The collection of $n+1$ vertices of an element $T \in \mathcal{T}_h$ is denoted by $\mathcal{V}_T$.  For each $z \in \mathcal{N}_h$, let $\omega_z$ denote the union of elements sharing the node $z$.  

For constants $a$ and $b$, we use the standard notation $a \lesssim b$ whenever there exists a constant $c$ that depends on neither the mesh size $h$ nor on the proximal point parameters such that $a \leq cb$. 
\subsection{General setup}\label{sec:general_setup}
The PG method is versatile and has been successfully applied to a diverse set of variational problems with inequality constraints \cite{dokken2025latent}.
In this work, we consider constrained optimization problems of the following form
\begin{equation}
\min_{v \in K} E(v), \label{eq:general_problem}
\end{equation}
where $E\colon V \rightarrow \R$ is an energy function and $K$ is a closed, convex, and non-empty set taking the general form
\begin{equation}
K = \{v \in V \mid  B v(x) \in C(x)  \text{ for almost every } x \in \Omega_d \subset \overline \Omega \}.  \label{eq:constraint_set_general}
\end{equation}
Here, $V$ is a given (affine) Hilbert space, $\Omega_d$ is a Hausdorff-measurable set with Hausdorff dimension $d \leq n$ and Hausdorff measure $\dd \mathcal{H}_d$, $B\colon V \rightarrow Q$ is a surjective bounded linear map, whose image $Q = \operatorname{im} B$ is continuously and densely embedded in $L^2(\Omega_d;\mathbb{R}^m)$, and $C(x) \subset \R^m$, which may vary with $x$, is a closed convex set with a nonempty interior.

 In the definition of the feasible set $K$, the map $B$ can be understood to define observables $o \in Q$  that are restricted pointwise a.e.\ to $C(x)$.
  For example, in obstacle problems, where $V\subset H^1(\Omega)$ and functions $v \in K$ satisfy $v \geq \phi$ a.e.\ in $\Omega$, we have $\Omega_d = \Omega$, $B = \mathrm{id}$ (the identity operator), and $C(x) = [\phi(x), \infty)$ for a given obstacle $\phi \in L^{\infty}(\Omega)$.
 In this case, the observables are simply the unknown functions $v \in V$.
 For contact problems, $\Omega_d = \Gamma$ is a Hausdorff-measurable subset of $\partial \Omega$, and the observables are the normal traces of the functions $v \in V$ restricted to $\Gamma$; namely, $o = v|_{\Gamma} \cdot n$.
 Refer to \cite{dokken2025latent} as well as \Cref{example:obstacle,example:signorini}, given below, for more details.
 For notational convenience later on, we introduce specific notation for the set of constrained observables:
 \begin{equation}
    \mathcal{O}
    =
    \{ o \in L^2(\Omega_d;\mathbb{R}^m) \mid o(x) \in C(x) \text{ for almost every } x \in \Omega_d \subset \overline \Omega \}
                    .
 \label{eq:ConstrainedObservables}
 \end{equation}

Throughout this work, we consider only quadratic energies,
\begin{equation}
E(u) = \frac{1}{2} a(u,u) - F(u),   
\end{equation}
where $a$ is a symmetric, continuous, and coercive bilinear form over the space $V$, satisfying
\begin{equation}\label{eq:coercivity_a}
    a(u,u) \geq \nu \|u\|_V^2 , \quad a(u,v) \leq M \|u\|_V \|v\|_V ~\fa u , v \in V,
\end{equation}
for positive constants $\nu$ and $M$, and $F$ is a bounded linear functional on $V$.
The convexity of $K$ implies that the model problem \cref{eq:general_problem} is equivalent to the following variational inequality \cite[Theorem 6.1-2]{ciarlet2013linear}: find $u^* \in K$, such that 
\begin{align} \label{eq:VI}
    a(u^*, v-u^*) \geq F (v-u^*) ~\fa v \in K.
\end{align}
Owing to the coercivity of $a$, \cref{eq:VI} admits a unique solution \cite[Theorem 5.6]{brezis2010functional}.
Moreover, $E(u^\ast) \leq E(v)$ for all $v \in K$.

\subsection{The proximal Galerkin method} \label{sec:pG_first}We present the method here and refer to \Cref{sec:main_results} for more details on its derivation. 
 Consider two conforming discrete subspaces $V_h \subset V$ and $ W_h \subset W := L^{\infty}(\Omega_d;\mathbb{R}^m)$.
The PG method, given in \Cref{alg:main_alg_discrete}, consists of iteratively solving for primal solutions $u_h^k \in V_h$ and latent solutions $\psi_h^k \in W_h$. 
\rami{For the derivation of \Cref{alg:main_alg_discrete} and precise definition of $ \mathcal{R}^*$, we refer to \Cref{sec:Legendre} and \Cref{sec:derivation}, respectively.}
Note that some form of Newton's method is usually used to solve each subproblem in practice; see, e.g., the implementations in \cite{zenodo:proximalgalerkin}.
\begin{algorithm}[htb]
\caption{The Proximal Galerkin Method} 
\begin{algorithmic}[1]\label{alg:main_alg_discrete}
    \State \textbf{input:} Initial latent solution guess $\psi_h^0  \in W_h$, a sequence of positive proximity parameters $\{\alpha_k\}$, and a functional $\mathcal{R}^*$ with $\nabla \mathcal{R}^*: W \rightarrow \mathcal{O}$.
    \State Initialize \(k = 1\). 
    \State \textbf{repeat}
    \State \quad Find $u_h^{k} \in  V_{h}$
and $\psi_h^{k} \in W_h$ such that 
   \begin{subequations} \label{eq:discrete_lvpp}
\begin{alignat}{2}  
\alpha_{k} \, a(u^{k}_h, v_h )  + b(v_h,\psi^{k}_h - \psi_h^{k-1}) & = \alpha_k \, F(v_h)  && ~\fa v_h \in V_h, \label{eq:lvpp_g_0}\\ 
b(u^{k}_h, w_h) - ( \nabla \mathcal{R}^{*} (\psi^{k}_h), w_h)_{\Omega_d}& = 0 && ~\fa  w_h \in W_h.  \label{eq:lvpp_g_1}
\end{alignat}
\end{subequations} 
\State \quad Assign  \(k \gets k + 1\).
    \State \textbf{until} a convergence test is satisfied.
\end{algorithmic}
\end{algorithm}

In \Cref{alg:main_alg_discrete}, $b: V  \times W \rightarrow \R$ is a bilinear form corresponding to the operator $B$ in the feasible set~\cref{eq:constraint_set_general}. Namely,  
\begin{align}
b(v,w) = (B v, w)_{\Omega_d} ~\fa v \in V , \,  w \in W.
\end{align} 
The map $\nabla \mathcal{R}^*$ is the inverse of the Fr\'echet derivative of a suitably chosen Legendre function $\mathcal{R}$; see \Cref{sec:Legendre} and \Cref{example:shannon_entropy} for more details. We refer to \Cref{alg:alg_obstacle,alg:sign}, given in the sections below, for applications of this algorithm to the obstacle and Signorini problems, respectively, with particular choices of $\mathcal{R}$.

The saddle point system \cref{eq:discrete_lvpp} also produces a non-polynomial approximation of the observable $o^\ast = Bu^\ast$: 
\begin{equation} \label{eq:tilde_u} 
 o_h^k = \nabla \mathcal{R}^*(\psi_h^{k}), \quad k \geq 0. 
\end{equation}
This variable is always constraint-preserving because $\operatorname{im} \nabla \mathcal{R}^* \subset \mathcal{O}$.
Likewise, $o_h^k(x) \in C(x)$ for all $x \in \Omega_d$. 
In addition, we define the dual variables
\begin{equation}
    \lambda^k_h = (\psi_h^{k-1}- \psi_h^{k})/\alpha_{k} , \quad k \geq 1, \label{eq:discrete_Lagrange}
\end{equation}
which are viewed as $\lambda_h^k \in Q'$ via $\langle \lambda_h^k, q\rangle = (\lambda_h^k, q)_{\Omega_d}$. As we show below, these dual variables converge to the unique $\lambda^* \in Q'$ satisfying 
\begin{equation} \label{eq:def_lambda*}
B' \lambda^* = E'(u^*) \text{ in } V'. 
\end{equation}
\rami{We end this Section with two remarks on the computational aspects of \Cref{alg:main_alg_discrete}.}
\rami{
    \begin{remark}[Equivalent formulation]
    We note that \eqref{eq:discrete_lvpp} can be equivalently reformulated with the variables $(u_h^k, \lambda_h^k) \in V_h \times W_h$ satisfying:
\begin{subequations} \label{eq:discrete_lvpp_lambda}
\begin{alignat}{2}
a(u^{k}_h, v_h )  - b(v_h,\lambda_h^k) & =  F(v_h)  && ~\fa v_h \in V_h, \label{eq:lvpp_lambda_0}\\ 
b(u^{k}_h, w_h) - \left( \nabla \mathcal{R}^{*} (-\alpha^k\lambda_h^k + \psi_h^
{k-1}), w_h \right)_{\Omega_d}& = 0 && ~\fa  w_h \in W_h, \label{eq:lvpp_lambda_1}
\end{alignat}
where $\psi_h^{k}$ is updated via the formula 
\begin{align}
\psi_h^{k} = - \alpha^k \lambda_h^k +\psi_h^{k-1}. 
\end{align}
The formulation \eqref{eq:discrete_lvpp_lambda} is more favorable from a computational perspective since, as we show in \Cref{thm:BAEP,thm:stability}, the variable $\lambda_h^k$ converges to $\lambda^*$ and remains bounded with $k$ and $h$ in the $Q'$ norm. This is in contrast to the variable $\psi_h^k$; see \Cref{thm:stability}. Having mentioned this, we note that the $(u_h^k,\psi_h^k)$ formulation of \eqref{eq:discrete_lvpp} has been successfully implemented in many previous works; see the extensive selection of numerical experiments in \cite{keith2023proximal,dokken2025latent,papadopoulos2024hierarchical,fu2024locally}.
Recognizing that the two formulations are mathematically equivalent, we present the analysis of \eqref{eq:discrete_lvpp} because it appears to yield simpler arguments. 
\end{subequations} 
    \end{remark}
}
\rami{
\begin{remark}[Solving the nonlinear subproblems] 
The nonlinear subproblems, \cref{eq:discrete_lvpp} or, equivalently, \cref{eq:discrete_lvpp_lambda}, can be solved with Newton's method initialized with the solution of the previous proximal subproblem. In this case, we note that an aggressive update strategy, $\alpha^k \gg \alpha^{k-1}$, $k = 1,2,\ldots$, may lead to numerical instabilities and, therefore, a large number of linear solves (i.e., Newton steps).
In turn, efficiently solving the nonlinear subproblems requires balancing the order of convergence \cref{eq:MainInequality} of the proximal (outer) loop with the local stability of Newton's method (inner loop).
In previous work \cite{keith2023proximal,dokken2025latent}, taking $\alpha^k = \min(c\alpha^{k-1}, C)$, with $c \geq 1,  C \gg 1$, has been found to be an effective choice leading to superlinear convergence in most model problems.
To increase stability, one can always adaptively decrease (i.e., backtrack) $\alpha^k$ whenever the Newton solver struggles.
Developing optimal reconditioners for the linear systems arising from Newton's method is part of ongoing work and is not the focus of this paper. However, we note that block-diagonal preconditioners have been applied efficiently in \cite{papadopoulos2024hierarchical}. Furthermore, reformulating \cref{eq:discrete_lvpp} as a first-order system of equations and hybridizing it via facet variables yields symmetric positive-definite stiffness matrices at each Newton step \cite{fu2024locally}, enabling the use of many well-known and efficient preconditioning strategies. 
\end{remark}}
\subsection{Main results}
\label{sub:MainResults}
\begin{itemize}[leftmargin=*]
    \item We prove that every PG subproblem is well-defined for the general setup of \Cref{sec:general_setup,sec:pG_first}. More precisely, \Cref{thm:existence_uniqueness} establishes the existence and uniqueness of solutions to the discrete nonlinear subproblems \cref{eq:discrete_lvpp} provided certain compatibility conditions between the subspaces $V_h$ and $W_h$ are satisfied. Additionally, we prove important new energy dissipation and stability estimates for \cref{eq:discrete_lvpp}.  
    \item We provide a general framework for the error analysis of \Cref{alg:main_alg_discrete}. This framework shows that the existence and optimality of certain reconstruction and Fortin operators, defined below,  are sufficient to derive error estimates and mesh-independence results; see \Cref{thm:BAEP},  \Cref{cor:err_rate_general}, and \Cref{cor:WAMI}.
    \item We demonstrate applications of this framework to the analysis of obstacle and Signorini problems; see \Cref{sec:error_rates} and \Cref{sec:error_rates_sig}. In particular, we construct and prove error rates for the Fortin and reconstruction operators used in the proposed framework of \Cref{sec:main_results}.
    \item
    Finally, one of the main results of this paper may be summarized by the following estimate. If $V \subset H^1(\Omega)$ and the solution $u^* \in H^{1+s}(\Omega)$ with $E'(u^*) \in \rami{H^{r-1}(\Omega)}$ for some $ s,r \in (0,1]$, then 
    \begin{align}
        \|u^* - u_h^\ell\|^2_{H^1(\Omega)} + \|\lambda^* - \lambda_h^\ell\|^2_{Q'} \leq \frac{C_{\rm{stab}}}{ \sum_{k=1}^\ell \alpha_k} + C_{\mathrm{reg}} h^{2\cdot\min\{r,s\}}
    \label{eq:MainInequality}
    \end{align}
    for all $\ell \geq 1$ and $h > 0$, where the constants $C_{\rm{stab}}$ and $C_{\rm{reg}}$ are independent of $h$, $\{\alpha_k\}$, and $\ell$.
            We first state~\cref{eq:MainInequality} in \Cref{cor:err_rate_general}, where we prove it under general assumptions.
    We then prove~\cref{eq:MainInequality} again in \Cref{cor:Obstacle,cor:sig_error} for the obstacle and Signorini problems, respectively, by verifying the general assumptions for specific choices of $V_h$ and $W_h$.
  \end{itemize}
\subsection{Example problems} 
We provide three examples illustrating the setup.
The forthcoming sections expand on the first two examples.
The PG method has not yet been applied to the third example, and so further analysis is reserved for follow-up work.
Together, these examples illustrate the need for a theory that comprises arbitrary Hilbert spaces $V$, observation maps $B$, subsets $\Omega_d$, and convex sets $C$.
Numerical experiments and additional examples can be found in \cite{dokken2025latent}.
\begin{example}[Obstacle problem]\label{example:obstacle}
For the obstacle problem, set the space $V = H^1_0(\Omega)$.
One seeks to minimize the Dirichlet energy
\begin{equation}
\label{eq:DirechletEnergy}
    E(u) = \frac{1}{2}  \|\nabla u \|^2_{L^2(\Omega)} - \int_\Omega f u \dd x,
\end{equation}
over the feasible set
\begin{equation}
K = \{v \in H^1_0(\Omega) \mid v \geq \phi \text{ a.e.\ in  }  \Omega \}, 
\end{equation}
where $f \in L^2(\Omega)$ and $\phi \in L^\infty(\Omega)$ is a given obstacle satisfying $\phi \leq 0  $ a.e.\ on $\partial \Omega$. 
The forms $a: H^1(\Omega) \times H^1(\Omega) \rightarrow \R$ and $F\colon H^1(\Omega) \rightarrow \R$ are  given by 
\begin{equation}
   a(u,v) = \int_{\Omega} \nabla u \cdot \nabla v \dd x,  \quad F(v) = \int_{\Omega} fv  \dd x.  \label{eq:from_a_f_obs}
\end{equation} 
Note that $a$ is coercive over $V$ due to Poincar\'e's inequality. 
To view  $K$ in the general form \cref{eq:constraint_set_general}, set $B$ to be the identity operator on $V = Q$, take $C(x) = [\phi(x), \infty)$, and let $\Omega_d = \Omega$. In this case, $\lambda^* = E'(u^*) \in V'$ with $\langle E'(u^*), \cdot \rangle = a(u^*, \cdot) - F(\cdot)$. 
\end{example}

\begin{example}[Signorini problem] \label{example:signorini}
We consider disjoint boundaries $\Gamma_\mathrm{T}$ and $\Gamma_\mathrm{D}$ which are  measurable subsets of $\partial \Omega$ with $\partial \Omega = \overline{\Gamma_\mathrm{D} \cup \Gamma_\mathrm{T}}$. The Signorini problem consists of finding the displacement $u \in V = (H^1_\mathrm{D}(\Omega))^n,$ where $H^1_{\mathrm{D}}(\Omega) := \{   v \in H^1(\Omega) \mid v|_{\Gamma_\mathrm{D}} = 0 \}$ minimizing the strain energy function 
\begin{equation}
 E(u) = \frac{1}{2} \int_{\Omega} \mathsf{C}\, \epsilon(u) : \epsilon(u) \dd x -  \int_{\Omega} f \cdot u  \dd x, 
\end{equation}
over the convex and closed set 
\begin{equation}
    K = \{ u \in V \mid u \cdot n \leq g \text{ a.e.\ on } \Gamma_{\mathrm{T}}\}. 
\end{equation}
Here, $\epsilon(u) = \frac{1}{2} (\nabla u + \nabla u^\top)$ is the linearized strain tensor, $\mathsf{C} \colon \R^{n\times n}_{\mathrm{sym}} \rightarrow \R^{n \times n}_{\mathrm{sym}}$ is a symmetric positive definite material tensor, $f \in L^2(\Omega;\R^n)$,  $n$ is the unit outward normal vector on $\partial \Omega$, and $g \in  L^{\infty}(\Gamma_{\mathrm{T}})$ with $g \geq 0$ is a prescribed gap function.  

We can write the set $K$ in the form \cref{eq:constraint_set_general} by setting  $B u = - u \vert_{\Gamma_{\mathrm{T}}} \cdot n$, $Q = \widetilde{H}^{1/2}(\Gamma_\mathrm{T})$, $\Omega_d = \Gamma_\mathrm{T}$, and $C(x) = [-g(x) , \infty) $. 
The bilinear form $a:H^1(\Omega; \R^n) \times H^1(\Omega; \R^n) \rightarrow \R $ and linear form $F\colon H^1(\Omega; \R^n) \rightarrow \R$ are given by 
\begin{equation}
a(u,v ) = \int_{\Omega} \mathsf{C}\, \epsilon(u) : \epsilon(v) \dd x, \quad F(v) = \int_{\Omega} f \cdot v \dd x. 
\end{equation}
Korn's inequality guarantees the coercivity of the bilinear form $a$ over $V$; see, e.g., \cite[Theorem 42.9]{ern2021finite2}. In this case, $\lambda^* = (\mathsf{C} \epsilon(u^*) n) \cdot n \in {H}^{-1/2}(\Gamma_{\mathrm T}) = Q^\prime$.\end{example}
 
\begin{example}[Image restoration]\label{example:TVR}
Fix $f \in L^2(\Omega)$ and $\beta > 0$.
It is well-known \cite{kunisch2004total} that the pre-dual of the classical total bounded variation-regularized tracking problem \cite{rudin1992nonlinear},
\[
    \text{minimize }~~
    J(u)
    :=
    |Du|(\Omega) + \frac{\beta}{2}\int_\Omega (u - f)^2 \dd x
    ~\text{ over } u\in BV(\Omega)\cap L^2(\Omega)
    \,,
\]
where $BV(\Omega)$ is the space of functions of bounded variation over $\Omega$ \cite{attouch2014variational,evans2018measure} and
\[
    |Du|(\Omega)
    =
    \sup\bigg\{
        \int_\Omega u \operatorname{div}\phi \dd x \mid
        \phi \in C^1_c(\Omega;\mathbb{R}^n)
        ,~
        |\phi|_{\ell^\infty} \leq 1
        \text{ a.e.\ in } \Omega
    \bigg\}
\]
denotes the $BV(\Omega)$-seminorm, can be expressed as a bilaterally constrained optimization problem.
More specifically, we are interested in finding a unique $u^\ast \in BV(\Omega)\cap L^2(\Omega)$ such that
\(
    J(u^\ast) \leq J(v)
\)
for all $v \in BV(\Omega)\cap L^2(\Omega)$.
This problem is well-posed, and in \cite{kunisch2004total} it is shown that its solution satisfies the following identity:
\[
    u^\ast = f + \beta^{-1} \operatorname{div} p^\ast
    ,
\]
where
\(
    p^\ast \in 
    H_0(\operatorname{div},\Omega)
    =
    \{
        p \in L^2(\Omega;\mathbb{R}^n)
        \mid
        \operatorname{div} p \in L^2(\Omega)
                        ,~
        p|_{\partial\Omega}\cdot n = 0
    \}
\)
is the unique minimizer of the energy function
\begin{equation}
\label{eq:DualFunctional}
    E(p) =
    \frac{1}{2}\int_\Omega (\operatorname{div} p)^2 \dd x
    +
    \frac{\gamma}{2}\int_\Omega (\operatorname{proj} p)^2 \dd x
    +
    \beta \int_\Omega f \operatorname{div} p \dd x
    \end{equation}
over the convex set
\[
    K
    =
    \big\{ p \in H_0(\operatorname{div},\Omega)
    \mid
    |p_i| \leq 1
    \text{ a.e.\ in } \Omega
    \text{ for each } i = 1,\ldots,n
    \big\}
    .
\]
In \cref{eq:DualFunctional}, $\gamma > 0$ is a fixed parameter and $\operatorname{proj}$ is the orthogonal projection $H_0(\operatorname{div},\Omega) \to \{ p \in H_0(\operatorname{div},\Omega) \mid \operatorname{div} p = 0 \}$.

We can write the set $K$ in the form of \cref{eq:constraint_set_general} by taking $B$ to be the identity on $V = Q = H_0(\operatorname{div},\Omega)$, $\Omega_d = \Omega$, and $C = [-1 , 1]^n$. 
In this case, the bilinear form $a\colon V \times V \rightarrow \R $ and linear form $F\colon V \rightarrow \R$ are given by 
\begin{equation}
    a(p,q) = \int_{\Omega} \operatorname{div} p \, \operatorname{div} q \, \dd x
    +
    \gamma\int_{\Omega} \operatorname{proj} p \, \operatorname{proj} q \, \dd x,
    \quad F(q) = -\beta\int_{\Omega} f \operatorname{div} q \, \dd x. 
\end{equation}
Note that $a$ is coercive over the Hilbert space $V$ due to a Friedrichs' inequality; see, e.g., \cite[Lemma~2.8]{demkowicz2023mathematical}.
\end{example}

\subsection{Closed observation maps: A conjecture}
\label{sub:ClosedMaps}
    Some important problems with feasible sets of the form~\cref{eq:constraint_set_general} do not fit into the general setup described in \Cref{sec:general_setup} because the observation map $B$ does not map onto a dense subset of $L^2(\Omega_d;\R^m)$.
    For instance, consider the classical elastoplastic torsion problem \cite{TWTing_1969,Brezis1971}, which involves minimizing the Dirichlet energy~\cref{eq:DirechletEnergy} over a feasible set with gradient constraints, such as
    \begin{equation}
    \label{eq:GradientConstraint}
        K =
        \{
            v \in H^1_0(\Omega) \mid |\nabla v| \leq 1 \text{ a.e.\ in } \Omega
        \}
        .
    \end{equation}
    Note that this set is recovered from~\cref{eq:constraint_set_general} by setting $C$ to be the closed unit ball in $\mathbb{R}^n$, $V = H^1_0(\Omega)$, $B = \nabla$, and $\Omega_d = \Omega$.
    For further details, see~\cite[Example 6]{dokken2025latent}.
    If $n \geq 2$, then $\operatorname{im} B$ is a closed proper subspace of $L^2(\Omega;\mathbb{R}^n)$.
    
    Informed by numerical experiments in \cite{dokken2025latent,papadopoulos2024hierarchical}, we conjecture that the PG iterates $u_h^k$ can also be shown to converge to the exact solution of~\cref{eq:general_problem} if $B \colon V \to L^2(\Omega_d;\mathbb{R}^m)$ is a closed operator.
    However, unlike the analysis below, we can not treat each subproblem~\cref{eq:discrete_lvpp} as a \textit{singularly-perturbed} nonlinear saddle-point problem in such a setting, and we must also account for the possibly non-trivial kernel of $B^\prime$.
    In turn, we expect different general results with such observation maps, and we do not consider them further in this work.

\section{Preliminaries}\label{sec:Preliminaries}
We briefly recall two key concepts fundamental to the derivation and analysis of PG methods: Legendre functions and Bregman divergences. We refer the interested reader to \cite{rockafellar1967conjugates,burachik1998generalized,bauschke1997legendre,keith2023proximal} for more details.
We then derive the PG method.

\subsection{Legendre functions}\label{sec:Legendre} \Cref{alg:main_alg_discrete} depends on the specific choice of the functional $\mathcal{R}^*$, which satisfies $\nabla \mathcal{R}^*:L^\infty(\Omega_d, \mathbb{R}^m) \rightarrow \mathcal{O}$. The construction of $\mathcal{R}^*$ relies on the concept of a Legendre function \cite{RTRockafellar_1970}.
In this work, a function $L: \R^m \rightarrow \R \cup \{+\infty\}$ is called a Legendre function if it is proper with $\operatorname{int} (\operatorname{dom} L) \neq \emptyset $, strictly convex and differentiable on $\operatorname{int} (\operatorname{dom} L)$ with a singular gradient on the boundary of  $\operatorname{dom} L$.  This subsection aims to briefly show how Legendre functions are utilized to define $\mathcal{R}^*$ as the convex conjugate of a superposition operator  $\mathcal{R}$. 

Consider a Carath\'eodory function $R: \Omega_d \times \R^m  \rightarrow \R \cup \{+ \infty\}$ where $ R(x, \cdot)$ is a Legendre function with $\operatorname{dom}(R(x,\cdot)) = C(x)$ f.a.e.\ $x \in \Omega_d$. 
Let $$\mathcal{R}(w)(x) = R(x,w(x)), \quad x \in \Omega_d, \,  w \in L^2(\Omega_d),$$
be the corresponding superposition operator with 
$$\nabla \mathcal{R}(u)(x) = \partial_u R(x,u(x)).$$
\begin{assumption}[Continuity]\label{assumption:R}
\label{assum:cont} 
We assume that  $\mathcal{R}:\mathcal{O} \rightarrow L^1(\Omega_d)$ is continuous; i.e., if $\{o_n\},\; o \in \mathcal{O}$, and $ \lim_{n\rightarrow \infty} \|o_n - o\|_{L^2(\Omega_d)} \rightarrow 0$,  then $ \lim_{n \rightarrow \infty} \| \mathcal{R}(o_n) - \mathcal{R}(o) \|_{L^1(\Omega_d)} = 0 $.
                    \end{assumption}
The convex conjugate of $R(x,\cdot)$ and its associated superposition operator are given by
\begin{equation}\label{eq:convex_conj_S}
    R^*(x,z)
    = \sup_{y \in \mathbb{R}}  \big\{ zy - R(x,y) \big\}, 
    \, \; \;
     \mathcal{R}^*(\psi)(x)
    =
    R^*(x,\psi(x))
    \,.
\end{equation} 
In the sequel, we tacitly assume a supercoercivity property of the map $R(x,\cdot)$; namely, $R(x, y)/|y| \rightarrow \infty$ as $|y| \rightarrow \infty$ for a.e.\ $x \in \Omega$.
This ensures that $R^*(x,\cdot)$ is well-defined and continuously differentiable over all of $\R^m$ \cite[Proposition 2.16]{bauschke1997legendre}; see also \cite[Corollary 13.3.1]{RTRockafellar_1970}.
Likewise, we can conclude that $\nabla \mathcal R^*$ is continuous on $L^\infty(\Omega_d;\R^m)$.
We now utilize the following relation, first demonstrated for Legendre functions in \cite{rockafellar1967conjugates}:
\begin{equation} \label{eq:conj_grad_inverse}
 \nabla \mathcal{R}^*  = (\nabla \mathcal{R})^{-1}. 
\end{equation} 
In turn, we conclude that $\operatorname{dom}(\nabla \mathcal R) = \operatorname{im}(\nabla \mathcal R^*) \subset \mathcal{O}$, implying $ \nabla \mathcal R^* (\psi)(x) \in \operatorname{int} C(x)$ f.a.e.\ $x \in \Omega_d$. 
We provide three explicit examples in \Cref{example:shannon_entropy,example:fermi_dirac,example:Hellinger} below, and we refer to \cite[Table 1]{dokken2025latent} for more. 

Identity \cref{eq:conj_grad_inverse} also allows us to define a latent representation of every observable $Bu \in \operatorname{dom}(\nabla \mathcal{R})$; namely,
\begin{equation}
\psi = \nabla \mathcal{R} ( Bu) \iff \nabla \mathcal{R}^*(\psi) = Bu.
\label{eq:latent_primal}
\end{equation}
We refer to such functions $\psi\colon \Omega_d \to \mathbb{R}^m$ as latent variables.

In the subsequent sections, we will make use of the following identity, which can be derived by directly expressing the  convex conjugate $R^*(x,z)$ given in \cref{eq:convex_conj_S} by 
\begin{equation}\label{eq:direct_expression_R*}
 R^*(x,z) = z \left( \partial_u R(x, \cdot) \right)^{-1}(z) - R(x, \left( \partial_u R(x, \cdot) \right)^{-1}(z)).
\end{equation}
Likewise, we deduce that
\begin{equation} \label{eq:explicit_R*}
\mathcal R^*(\psi) = \psi Bu - \mathcal{R}(Bu)
\end{equation}
for any $u$ and $\psi$ satisfying  \cref{eq:latent_primal}.
 
\begin{example}[Shannon entropy] \label{example:shannon_entropy}
Consider \Cref{example:obstacle} and define
\[ 
R(x,y) = (y - \phi(x)) \ln (y - \phi(x)) - (y -\phi(x)) ,
\]
if $y \geq \phi(x)$ and $R(x,y) = + \infty$ otherwise.  The corresponding superposition operator is 
\begin{equation*}
\mathcal{R}(u) = (u -\phi) \ln (u -\phi) - (u - \phi). 
\end{equation*}
We deduce that  $\nabla \mathcal{R}(u) = \ln (u - \phi)$ whenever $u \in \operatorname{dom}(\nabla \mathcal R)$ where
\[ 
\operatorname{dom}(\nabla \mathcal{R}) = \{ u \in L^{\infty}(\Omega) \mid \operatorname{ess} \operatorname{inf} (u - \phi) > 0 \}. 
\]
The continuity of $\mathcal{R}$ on $\mathcal{O}$ (\Cref{assum:cont}) follows from \cite[Theorem 4.1]{keith2023proximal}. 
A simple computation shows that $R^*(x,z) = \exp(z) + \phi(x) z$ with 
\begin{equation}
\mathcal{R}^*(\psi) = \exp(\psi) + \phi \psi, \quad \quad \nabla \mathcal{R}^* (\psi) = \exp(\psi) + \phi.   \label{eq:shannon}
\end{equation}
Note that $\nabla \mathcal{R}^*$ is well defined for all of $L^{\infty}(\Omega)$ and $\nabla \mathcal{R}^*(\psi) > \phi$ a.e. in $\Omega$. 
\end{example} 

\begin{example}[Fermi--Dirac binary entropy] \label{example:fermi_dirac} For bilateral constraints,  $\underline{u}(x) \leq Bu(x) \leq \overline{u}(x)$ in $\Omega_d$, we define 
\begin{equation}
R(x,y) = (y - \underline{u}(x)) \ln (y - \underline{u}(x)) + (\overline{u}(x) - y) \ln(\overline{u}(x) - y), 
\end{equation}
if $\underline{u}(x) \leq y \leq \overline{u}(x)$ and $R(x,y) = +\infty$ otherwise.
The corresponding superposition operator $\mathcal{R}$ is continuous on $\mathcal{O}$; i.e., it satisfies \Cref{assum:cont} \cite[Lemma 3.2]{keith2023proximal}.  Here, $\nabla \mathcal{R}(o) = \ln(o- \underline{u})  - \ln (\overline{u} - o)$ with 
\[ 
\operatorname{dom}(\nabla \mathcal{R}) = \{ o \in L^\infty(\Omega_d) \mid \operatorname{ess} \operatorname{inf} (o - \underline{u}) > 0 \text{ and } \operatorname{ess} \operatorname{sup} (\overline{u} - o ) <  0 \}.
\]
With \cref{eq:conj_grad_inverse}, we derive that
\begin{equation}
\nabla \mathcal{R}^*(\psi) = \frac{\underline{u} + \overline{u} \exp(\psi)}{1+ \exp(\psi)}. 
\end{equation}
Observe that $ \underline{u} < \nabla \mathcal{R}^*(\psi) < \overline{u}$. This entropy functional is suitable for \Cref{example:TVR} with $\underline{u} = -1$ and $\overline{u}= 1$ where the latent variable space is vector-valued, $W = L^{\infty}(\Omega; \R^n)$. 
\end{example}

\begin{example}[Hellinger entropy]
\label{example:Hellinger}
For Euclidean norm constrains $|Bu| \leq \gamma$, cf.\ \cref{eq:GradientConstraint},  one can select the Hellinger entropy 
\[ 
\mathcal{R}(u) = - \sqrt{\gamma^2 - |u|^2},
 \text{ with } \nabla \mathcal{R}^*(\psi) = \frac{\gamma}{\sqrt{1 + |\psi|^2}} \psi. 
\]
In this case, we have that 
$$ 
\nabla \mathcal{R}(o) = \frac{o}{\sqrt{ \gamma^2 - |o|^2}},  \quad \operatorname{dom}(\nabla \mathcal{R}) = \{ o \in L^{\infty}(\Omega_d; \R^m) \mid \operatorname{ess} \operatorname{sup} (|o| - \gamma ) < 0 \}.
$$
This entropy has been used in the PG framework for gradient norm constraints, see \Cref{sub:ClosedMaps} and \cite[Example 6]{dokken2025latent}. 
\end{example}

\subsection{Bregman divergences}\label{sec:Bregman} The Legendre functions defined in \Cref{sec:Legendre} allow one to define the Bregman divergence, which is a key ingredient in the derivation of generalized proximal point methods \cite{burachik1998generalized, keith2023proximal}.  For $u \in \rm{dom}(\mathcal{R})$ and $v \in \mathrm{dom}(\nabla \mathcal{R})$, the Bregman divergence is given by the error in  the first order Taylor expansion of an associated convex function $\mathcal{R}$:
\begin{equation}
    \label{eq:Bregman}
    \mathcal{D}(u,v) = \mathcal{R}(u) - \mathcal{R}(v) - \nabla \mathcal{R}(v)(u - v) . 
\end{equation}
Observe that $\mathcal{D}(u,v) \geq 0 $ and $\mathcal{D}(v,v) = 0$.
We will also use the dual or conjugate divergence  
\begin{equation} \label{eq:dual_divergence}
 \mathcal{D}^*(\eta, \psi) = \mathcal{R}^*(\eta) -\mathcal{R}^*(\psi) - \nabla \mathcal{R}^*(\psi)(\eta - \psi). 
\end{equation}
The Bregman divergence and its conjugate are linked as follows. If $\eta = \nabla \mathcal{R}(v)$ and $\psi = \nabla \mathcal{R}(u)$ for $u, v \in \mathrm{dom}(\nabla \mathcal{R}) $, then 
\begin{equation} \label{eq:linking_D*_D}
 \mathcal{D}^*(\eta, \psi)  = \mathcal{D}(u,v). 
\end{equation} 
The proof can be found in \cite{amari2016information}, see also \cite[Theorem 3.9]{bauschke1997legendre}.
We also recall the three points identity \cite[Lemma 3.1]{chen1993convergence}:
\begin{align}
\mathcal{D}(u,v) - \mathcal{D}(u,w) + \mathcal{D}(v,w) & = (\nabla \mathcal{R}(v) -\nabla \mathcal{R}(w)) ( v- u).   \label{eq:three_point_iden_star}
\end{align}
The same identity holds for $\mathcal{D}^*$ with $\mathcal{R}^*$ replacing $\mathcal{R}$.

\subsection{Deriving the proximal Galerkin method} \label{sec:derivation}
The PG method can be seen as a conforming finite element discretization of a continuous-level algorithm known as the latent variable proximal point algorithm (LVPP) \cite{dokken2025latent}.
However, LVPP is itself just a convenient rewriting of the Bregman proximal point algorithm \cite{chen1993convergence}:
\begin{equation}
\label{eq:generalized_proximal}
    u^k = \argmin_{u \in K} 
    E(u)  
    + \alpha_k^{-1}\int_{\Omega_d}\mathcal D(Bu,Bu^{k-1}) \dd \mathcal{H}_d,
        \qquad
    k = 1,2,\ldots
\end{equation}
The algorithm~\cref{eq:generalized_proximal} leverages the Legendre function $\mathcal{R}$ to adaptively regularize~\cref{eq:general_problem}, resulting in iterates $u^k$ that converge at a controllable speed to the global minimizer $u^\ast$.
The message behind the LVPP reformulation is that the subproblems in~\cref{eq:generalized_proximal} are easy to discretize and solve if the latent variable $\psi$ in~\cref{eq:latent_primal} is incorporated.

Formally, choosing $R(x,\cdot)$ with singular derivatives at $\partial C(x)$ f.a.e.\ $x\in \Omega_d$ (cf.\ \Cref{sec:Legendre}), one expects that $B u^k \in  \operatorname{im} \nabla \mathcal{R}^*$; in particular, $Bu^k(x) \in \mathrm{int} \, C(x) $ f.a.e.\ $x \in  \Omega_d$.
Likewise, the solutions $u^k$ of the regularized subproblems~\cref{eq:generalized_proximal} are characterized by variational equations:
\begin{equation} \label{eq:intro:fooc}
    \text{find } u^k \in K \text{ such that }~ \alpha_k \langle E^\prime(u^k), v \rangle + (\nabla \mathcal{R}(Bu^k),Bv)_{\Omega_d} = (\nabla \mathcal{R}(Bu^{k-1}),Bv)_{\Omega_d}
\end{equation}
for all $v \in V$.
Introducing the latent variables $\psi^k = \nabla \mathcal{R}(Bu^k)$ (i.e., $Bu^k = \nabla \mathcal{R}^*(\psi^k)$ by~\cref{eq:conj_grad_inverse}) to rewrite the resulting equations in saddle-point form yields the LVPP algorithm, and discretizing the resulting saddle-point problems leads to the PG method \cite{dokken2025latent}.
We refer the reader to \cite{keith2023proximal} for a detailed derivation of LVPP for the obstacle problem and to \cite[Section 3]{fu2024locally} for a brief summary.

The LVPP algorithm reads as follows: for some starting point $\psi^0 \in {W}$ and an unsummable sequence of positive parameters $ \{ \alpha_k \}$,  find ($u^k, \psi^k) \in V \times W$ such that 
\begin{subequations} \label{eq:cont_alg}
\begin{alignat}{3}
\label{eq:cont_alg_0}
\alpha_k a(u^k,v) + b(v, \psi^k - \psi^{k-1}) & = \alpha_k F(v) &&  ~\fa v \in V,  \\ 
b(u^k, w) - ( \nabla \mathcal{R}^*(\psi^k), w)_{\Omega_d} & = 0  && ~\fa w \in W,    \label{eq:cont_alg_1}
\end{alignat}
\end{subequations}
and $k = 1,\ldots$   
Note that from \cref{eq:cont_alg_1}, the latent variables $\psi^k$  satisfy a crucial identity: $Bu^k = \nabla \mathcal{R}^*(\psi^k) \in  \mathcal{O}$.
Proving that \cref{eq:cont_alg} is well-posed is generally a challenging task.
To date, it has only been accomplished for the obstacle problem (cf.\ \Cref{example:obstacle}) under suitable regularity assumptions on the problem data \cite{keith2023proximal}. \rami{The main difficulty in proving a general well-posedness result for $\eqref{eq:cont_alg}$ is in identifying the correct functional setting that typically involves a non-reflexive Banach space $W$. For instance, for the obstacle problem (cf.\ \Cref{example:obstacle}), it is known that $W=L^\infty(\Omega)$ is the correct choice under suitable regularity assumptions on the problem data \cite{keith2023proximal}. }
Fortunately, in the \textit{a priori} error analysis that follows, we do not rely on the continuous-level subproblems~\cref{eq:cont_alg} in any way.
Instead, we focus solely on the discretized subproblems~\cref{eq:discrete_lvpp}.

\section{Framework for the analysis of the proximal Galerkin method} \label{sec:main_results}
This section presents our main results in a general framework. We first introduce a compatible subspace condition to demonstrate that the PG method is well-defined.
We then show that PG is endowed with a convenient energy decay property, leading to best approximation results and error convergence rates if a so-called reconstruction operator exists. The assumptions of this section are verified for the obstacle and Signorini problems in \Cref{sec:error_rates} and \Cref{sec:error_rates_sig}, respectively. 
\subsection{Compatible subspaces} \label{sec:compatibility}
A critical condition for our analysis is that the finite-\linebreak{}dimensional subspaces $V_h \subset V$ and $W_h \subset W \subset Q^\prime$ satisfy the discrete inf-sup or Ladyzhenskaya--Babu\v{s}ka--Brezzi (LBB) condition
 \begin{equation} 
    \inf_{w \in W_h} \sup_{v \in V_h} \frac{ b(v,w)}{\|v\|_{V} \|w\|_{Q^\prime}} = \beta_h >  \beta_0,
\label{eq:inf_sup}
\end{equation}
where $\beta_0 > 0$ is a mesh-independent positive constant.
This condition is closely related to the continuous inf-sup condition 
\begin{equation}\label{eq:inf_sup_continuous}
 \inf_{w \in W} \sup_{v \in V} \frac{b(v,w)}{\|v\|_V \|w\|_{Q^\prime}} = \beta > 0.
\end{equation}
The density of $W$ in $Q^\prime$ and the closed range theorem can be used to show that \cref{eq:inf_sup_continuous} implies $B^\prime \colon Q^\prime \to V^\prime$ is bounded from below. 
Likewise,~\cref{eq:inf_sup} ensures that $B^\prime$ remains bounded from below (uniformly in $h$) after discretization.

To prove \cref{eq:inf_sup}, it suffices to exhibit a continuous so-called Fortin operator $\Pi_h \colon V \rightarrow V_h$ satisfying $\|\Pi_h v \|_V\lesssim \|v\|_V$ for all $v \in V$ and   
\begin{equation} \label{eq:fortin_map} 
   b(v - \Pi_h v, w_h) = 0
    ~\fa
    v \in V,\; w_h \in W_h;
\end{equation}  
see, e.g., \cite[Lemma 26.9]{ern2021finite2} and \cite[Section~5.4.3]{boffi2013mixed}.
In what follows, we let $\|\Pi_h\|$ denote the operator norm of $\Pi_h$.

We consider the following examples to further illustrate the inf-sup conditions. 
\begin{example}[The obstacle problem, part 2] Recall that $V = H^1_0(\Omega)$ with norm $\|\cdot\|_V = |\cdot|_{H^{1}(\Omega)}$ and $Q = H^{1}_0(\Omega)$. The norm $\|\cdot\|_{Q^\prime}$ is the $H^{-1}(\Omega)$ norm. For $w \in W = L^\infty(\Omega)$, we have
\begin{equation}
\|w\|_{H^{-1}(\Omega)} = \sup_{v\in H^1_0(\Omega)} \frac{|\int_{\Omega} w v \dd x |}{\|\nabla v\|_{L^2(\Omega)}},
\end{equation}
and the bilinear form $b$ is simply the $L^2(\Omega_d)$-inner product, given by
\begin{equation}
b(v, w) = (v, w)_{\Omega_d} = \int_{\Omega} v w \dd x ~\fa v, w \in L^2(\Omega). 
\end{equation} 
The associated LBB condition \cref{eq:inf_sup_continuous} holds with equality for $ \beta = 1$, which immediately follows from the definition of the $H^{-1}(\Omega)$ norm. In \Cref{sec:error_rates}, we provide two examples of compatible subspaces $V_h \times W_h$ satisfying \cref{eq:inf_sup} with their corresponding Fortin operators. 
\end{example}
\begin{example}[The Signorini problem, part 2] Recall that $V = (H_{\mathrm{D}}^1(\Omega))^n$. 
 The bilinear form $b$ is given by 
\begin{equation}
    b(v, w) =  \int_{\Gamma_{\mathrm{T}}} v \cdot n \,  w \dd s ~\fa v \in (H_{\mathrm{D}}^1(\Omega))^n ,~ w \in L^2(\Gamma_{\mathrm{T}}). 
\end{equation}
The norm $\|\cdot\|_{Q^\prime}$  is defined as follows:
\begin{equation}
\|w\|_{Q^\prime} = \|w\|_{H^{-1/2}(\Gamma_{\mathrm{T}})}
= \sup_{\hat v \in \widetilde{H}^{1/2}(\Gamma_{\rm T}) }  \frac{|\int_{\Gamma_{\mathrm{T}}} w \hat v \dd s| } {\|\hat v\|_{\widetilde{H}^{1/2}(\Gamma_{\rm T})}}
\end{equation}
The following inf-sup condition holds with a constant $\beta > 0$ \cite[Proposition 7.2 and Remark 7.2]{Chouly2023}: 
\begin{equation}
\inf_{w \in L^{\infty}(\Gamma_{\mathrm{T}})} \sup_{v \in (H^1(\Omega_\mathrm{D}))^n} \frac{\int_{\Gamma_\mathrm{T}} v \cdot n w \dd s }{ \|v\|_{H^1(\Omega)} \|w\|_{H^{-1/2}(\Gamma_\mathrm{T})}} \geq \beta .  
\end{equation}
In \Cref{sec:error_rates_sig}, we provide an example of $V_h \times W_h$ satisfying \cref{eq:inf_sup}, and we construct the corresponding Fortin operator.  
\end{example}
\begin{example}[Point-wise divergence constraints] \label{example:divergence_constraint} The present framework also allows one to handle a limited number of cases where $B$ is a differential operator; cf.\ \Cref{sub:ClosedMaps}.
For example, consider 
\[
K = \{ v \in H(\mathrm{div}, \Omega) \mid |\nabla \cdot v |  \leq 1 \text{ a.e.\  in } \Omega \},
\]
and define the energy functional $E(v) = \frac12 \|v\|^2_{H(\mathrm{div}, \Omega)} - (f,v)$ for all $v \in V = H(\mathrm{div},\Omega)$ and some fixed $f \in L^2(\Omega;\R^n)$.
The form $b: V \times Q \rightarrow \R$ then reads 
\[ 
b(v, w) = \int_{\Omega} \nabla \cdot v  \, w \dd x, 
\]
with $Q = L^2(\Omega)$.
For the Legendre function $\mathcal{R}$, one can choose the Fermi--Dirac entropy given in \Cref{example:fermi_dirac} with $\underline{u} = -1$ and $\overline{u} = 1$, although other convenient choices are also appropriate; cf.\ \cite{fu2024locally}. 
The continuous LBB condition \cref{eq:inf_sup_continuous} holds thanks to the surjectivity of the divergence operator from $H(\mathrm{div}, \Omega)$ to $L^2(\Omega)$; see, e.g., \cite[Lemma 51.2]{ern2021finite2}. To ensure that \cref{eq:inf_sup} holds, a natural choice of subspace would be the $H(\mathrm{div})$-conforming Raviart--Thomas space for $V_h$ and the broken polynomial space of the same order for $W_h$; see, e.g., \cite[Lemma 51.10]{ern2021finite2}.
\end{example}

\subsection{The PG method is well-defined} We prove that each nonlinear subproblem \cref{eq:discrete_lvpp} has a unique solution.
\begin{theorem}[Existence and uniqueness of solutions] \label{thm:existence_uniqueness}
Assume we are in the setting outlined in \Cref{sec:general_setup,sec:compatibility}.  Then for every $k \geq 1$, the nonlinear saddle point problem \cref{eq:lvpp_g_0}-\cref{eq:lvpp_g_1} admits a unique solution pair $(u_h^{k}, \psi_h^{k})\in V_h\times W_h$.
                \end{theorem}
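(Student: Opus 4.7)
The plan is to recognize the saddle-point system \cref{eq:discrete_lvpp} as the first-order optimality condition for a strictly convex--concave Lagrangian on the finite-dimensional space $V_h \times W_h$, and to exploit this structure. Concretely, I would introduce
\begin{equation*}
L(u_h, \psi_h) := \alpha_k E(u_h) + b(u_h, \psi_h - \psi_h^{k-1}) - \int_{\Omega_d} \mathcal{R}^*(\psi_h) \dd \mathcal{H}_d
\end{equation*}
and verify that its critical points over $V_h \times W_h$ are precisely the solutions of \cref{eq:lvpp_g_0,eq:lvpp_g_1}. By the coercivity of $a$, the map $u_h \mapsto L(u_h, \psi_h)$ is a strictly convex coercive quadratic; by the strict convexity of $\mathcal{R}^*$ on all of $\mathbb{R}^m$ (inherited from the Legendre property of $\mathcal{R}$ and the supercoercivity assumption in \Cref{sec:Legendre}), the map $\psi_h \mapsto L(u_h, \psi_h)$ is strictly concave. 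Uniqueness of any saddle point follows at once.

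For existence, I would reduce to a minimization on $W_h$. For each fixed $\psi_h \in W_h$, the inner problem $\min_{u_h \in V_h} L(u_h, \psi_h)$ is a coercive linear-quadratic problem with a unique minimizer $u_h(\psi_h) \in V_h$ depending affinely on $\psi_h$. A short computation using the first-order condition yields
\begin{equation*}
\min_{u_h \in V_h} L(u_h, \psi_h) = -\tfrac{\alpha_k}{2} a(u_h(\psi_h), u_h(\psi_h)) - \int_{\Omega_d} \mathcal{R}^*(\psi_h) \dd \mathcal{H}_d,
\end{equation*}
so finding a saddle point reduces to minimizing
\begin{equation*}
J(\psi_h) := \tfrac{\alpha_k}{2} a(u_h(\psi_h), u_h(\psi_h)) + \int_{\Omega_d} \mathcal{R}^*(\psi_h) \dd \mathcal{H}_d
\end{equation*}
over $W_h$. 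The functional $J$ is continuous (using continuity of $\nabla \mathcal{R}^*$ on $L^\infty$), strictly convex (by the strict convexity of $\mathcal{R}^*$), and, as argued below, coercive; so Weierstrass' theorem yields existence and uniqueness of a minimizer $\psi_h^k$, and then $u_h^k := u_h(\psi_h^k)$ completes the saddle pair.

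The key technical step -- and the main obstacle -- is the coercivity of $J$, for which the LBB condition \cref{eq:inf_sup} is essential. Decomposing $u_h(\psi_h) = u_F + T(\psi_h - \psi_h^{k-1})$, where $u_F$ solves $a(u_F, \cdot) = F(\cdot)$ on $V_h$ and $T \colon W_h \to V_h$ is the linear map defined by $\alpha_k a(T\delta, v_h) = -b(v_h, \delta)$ for all $v_h \in V_h$, a standard duality argument using \cref{eq:inf_sup} gives $\|T\delta\|_V \gtrsim \|\delta\|_{Q'}$, so the quadratic form $\delta \mapsto a(T\delta, T\delta)$ is positive definite on $W_h$. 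Consequently $a(u_h(\psi_h), u_h(\psi_h))$ grows quadratically in $\|\psi_h\|_{Q'}$, which is equivalent to $\|\psi_h\|_{W_h}$ in finite dimension. The integral term is bounded below by an affine function of $\psi_h$ (by convexity of $\mathcal{R}^*$), and hence does not interfere with the quadratic growth. Altogether $J(\psi_h) \to +\infty$ as $\|\psi_h\|_{W_h} \to \infty$, and the result follows.
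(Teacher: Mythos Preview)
Your proposal is correct and follows essentially the same route as the paper: introduce the same Lagrangian, eliminate $u_h$ via the inner quadratic minimization, and then establish existence of an optimizer in $\psi_h$ by proving coercivity of the reduced functional through the LBB condition~\cref{eq:inf_sup}. The only cosmetic differences are that the paper argues uniqueness by a direct monotonicity computation (subtracting two solutions and testing with their difference) rather than invoking strict convex--concavity, and it spells out the standard minimax argument that the optimizer of the reduced problem yields a genuine saddle point, which you leave implicit.
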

\begin{proof}
We drop the superscript $k$ to simplify notation and start the proof with the uniqueness assertion. Indeed, if $(\hat u_h, \hat \psi_h)$ and $(u_h, \psi_h)$ solve \cref{eq:discrete_lvpp}, then, assuming $\hat u_h \neq u_h$ and using coercivity of $a$ and the strict monotonicity of $\nabla \mathcal R^*$, we obtain 
\begin{align*}
    \nu \|\hat u_h - u_h\|_V^2 
    \leq
    \alpha a(\hat u_h - u_h, \hat u_h - u_h)
    &= 
    -  b( \hat u_h - u_h, \hat \psi_h - \psi_h ) 
    \\
    &= 
    - (\nabla \mathcal{R}^*(\hat \psi_h) - \nabla \mathcal{R}^*(\psi_h), \hat \psi_h - \psi_h)_{\Omega_d} 
    \\&
    < 
    0,
\end{align*}
which implies $\hat u_h = u_h$.
\Cref{eq:lvpp_g_0} then implies $b(v_h, \hat \psi_h - \psi_h) = 0$ for all $v_h\in V_h$ and the assumed compatibility assumptions on $V_h$ and $W_h$ yield $\hat \psi_h = \psi_h$.
Indeed, a direct consequence of \cref{eq:inf_sup} is that we can estimate
\begin{equation}
    \|b( \cdot, w)\|_{V_h'} \geq \beta_0 \| w \|_{Q'}, \label{eq:injectivity_b'}
\end{equation}
for all $w\in W_h$.
In other words, the map $w\mapsto b(\cdot,w)$ is injective with closed range between the spaces $(W_h, \|\cdot\|_{Q'})\to(V_h', \|\cdot\|_{V_h'})$.

To show the existence of solutions, we consider the following Lagrangian $\mathcal{L}: V_h \times W_h \rightarrow \mathbb R$: 
\begin{equation}
\label{eq:def_Lagrangian}
    \mathcal{L}(v, w) 
    =
    \frac{\alpha}{2}a(v,v) - \alpha F(v)
    +
    b(v,w) 
    -
    b(v,\psi_h^{k-1})
    - 
    ( \mathcal{R}^* (w) , 1)_{\Omega_d}. 
\end{equation}
Clearly, every critical point of $\mathcal L$ is a solution to 
\cref{eq:lvpp_g_0}-\cref{eq:lvpp_g_1}. We will show the existence of a critical point by minimizing in the first variable and maximizing in the second variable of $\mathcal L$. As $a$ is coercive, for fixed $w\in W_h$ we can find a unique $v(w)\in V_h$ satisfying
\begin{equation} 
\label{eq:def_v(w)}
    v(w) 
    =
    \underset{v\in V_h} \argmin  \,  \mathcal L(v,w).
\end{equation}
Note that $v(w)$ solves 
\begin{equation}\label{eq:optimality_within_saddlepoint_lemma}
    \alpha [a(v(w), v) - F(v)] = b(v,\psi_h^{k-1}) - b(v,w) ~\fa v\in V_h,
\end{equation}
or equivalently, setting $A = v\mapsto a(v,\cdot)$, we have
\begin{equation}\label{eq:formula_for_vw}
    v(w)
    =
    A^{-1}[F + \alpha^{-1}b(\cdot,\psi_h^{k-1}-w)].
\end{equation}
Substituting $v = v(w)\in V_h$ into~\cref{eq:def_Lagrangian}, we obtain
\begin{equation*}
    J(w)
    \coloneqq 
    \mathcal L(v(w), w) 
    =
    -\frac12a(v(w), v(w)) - \int_{\Omega_d}\mathcal R^*(w) \dd \mathcal{H}_d.
\end{equation*}
Now, employing formula \cref{eq:formula_for_vw} we bound $\|v(w)\|_V$ from below:
\begin{align*}
    \|v(w)\|_{V}
    &=
    \| A^{-1}[F + \alpha^{-1}b(\cdot,\psi_h^{k-1}-w)] \|_{V}
    \\
    &\geq
    c \| F + \alpha^{-1}b(\cdot,\psi_h^{k-1}-w) \|_{V_h'}
    \\
    &\geq
    c\left[
        \| b(\cdot,w) \|_{V_h'} - \| F + b(\cdot, \psi_h^{k-1}) \|_{V_h'}
    \right]
    \\
    &\geq
    c\left[
        \beta_0 \| w \|_{Q'} - 1
    \right]
\end{align*}
where, in the first step, we used that the isomorphism $A^{-1}:V_h'\to V_h$ is injective with closed range.  In the last estimate, we used \cref{eq:injectivity_b'}. 
Together with the fact that we can lower bound $(\mathcal R^*(w) ,1 )_{\Omega_d}$ by an affine linear function and the equivalence of norms in finite dimensional spaces, we deduce that $-J(w)\to\infty$ as $\|w\|_{Q^\prime}\to\infty$. The map $w\mapsto v(w)$ is continuous as the first term is affine linear and $\mathcal R^*$ is continuous, hence we can guarantee the existence of a maximizer $w^*$ for $J$. 

We now show that the pair $(v(w^*), w^*)$ is the sought-after saddle point of $\mathcal L$. This follows from standard arguments; see e.g., \cite[Exercise 7.16-4]{ciarlet2013linear}. We provide some details for completeness.  First observe that since $\mathcal{L}(v, \cdot)$ is concave, we have that 
\begin{align}
\theta \mathcal{L}(v(\delta_\theta) , w ) + (1-\theta)\mathcal{L}(v(\delta_\theta) , w^* )  \leq \mathcal{L}(v(\delta_\theta) , \delta_\theta ) = J(\delta_\theta) \leq J(w^*),  
\end{align}
for any $\theta \in [0,1]$, $w \in W_h$ and $\delta_{\theta} = \theta w + (1-\theta) w^*$. Thus, 
\begin{align}
\theta \mathcal{L}(v(\delta_\theta), w) \leq J(w^*) + (\theta - 1) \mathcal{L}(v(\delta_\theta) , w^* ) \leq \theta J(w^*) = \theta \mathcal{L}(v(w^*) , w^*), 
\end{align}
where we used that $\mathcal{L}(v(w^*), w^*) \leq \mathcal{L}(v(\delta_\theta) , w^* )$, see \cref{eq:def_v(w)}. Then, we obtain that for any $\theta > 0$, $\mathcal{L}(v(\delta_\theta) , w ) \leq \mathcal{L}(v(w^*), w^*)$. With the continuity of the map $w \mapsto v(w)$ and of $\mathcal{L}(\cdot, w)$, we conclude that 
\begin{equation*}
\mathcal{L}(v(w^*), w)  \leq \mathcal{L}(v(w^*), w^*)  ~\fa w \in W_h.     
\end{equation*}
Hence, 
\begin{equation}
\inf_{v_h \in V_h} \sup_{w \in W_h} \mathcal{L}(v, w )  \leq \sup_{w \in W_h} \mathcal{L}(v(w^*), w ) \leq \mathcal{L}(v(w^*), w^*) = \sup_{w \in W_h} \inf_{v \in V_h} \mathcal{L}(v,w). 
\end{equation}
Since $\sup_{w \in W_h} \inf_{v \in V_h} \mathcal{L}(v,w) \leq \inf_{v_h \in V_h} \sup_{w \in W_h} \mathcal{L}(v, w ) $ always holds, we conclude that 
\[ 
  \sup_{w \in W_h} \mathcal{L}(v(w^*), w ) =  \mathcal{L}(v(w^*), w^*) = \inf_{v \in V_h } \mathcal{L}(v , w^*), 
\] 
which finishes the proof. 
\end{proof} 
In addition to the existence and uniqueness result of \Cref{thm:existence_uniqueness}, we seek stability bounds on the iterates $(u_h^k, \psi_h^k)$, showing that these discrete solutions $u_h^k$ remain uniformly bounded in suitable norms independently of $h$, $\{\alpha_k\}$ and $\ell$. Uniform stability of $\psi_h^\ell$ in weak norms with respect to $h$ is also expected.  Such bounds can be established under additional technical assumptions on $\nabla \mathcal{R}^*$, with details provided in 
\Cref{appendix:stability}; in particular, see \Cref{thm:stability}.

\subsection{Energy dissipation}
We establish an energy dissipation property, which serves as a key tool for proving both stability and convergence of \Cref{alg:main_alg_discrete}.

\begin{lemma}[Energy dissipation]
\label{lemma:energy_dissipation}
The following property holds for all $k \geq 1$:
    \begin{equation}
      E(u_h^{k+1}) + \frac{1}{\alpha_{k+1}}  ( \mathcal{D}^{*}(\psi_h^{k+1}, \psi_h^k) + \mathcal{D}^*(\psi_h^k, \psi_h^{k+1}),  1 )_{\Omega_d} \leq E(u_h^k).            \label{eq:energy_decreasing_discrete}
 \end{equation} 
\end{lemma}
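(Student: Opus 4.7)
The plan is to derive the dissipation inequality by testing the variational system with a well-chosen pair of functions and then converting the resulting bilinear pairing into the symmetric sum of dual Bregman divergences using the definition~\cref{eq:dual_divergence} directly. I will work with two consecutive iterates at levels $k$ and $k+1$, and I will exploit that both quantities appearing in~\cref{eq:energy_decreasing_discrete} are quadratic/convex combinations that can be manipulated by pure algebraic identities.

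First, I will test~\cref{eq:lvpp_g_0} at step $k+1$ with $v_h = u_h^{k+1} - u_h^k \in V_h$ to obtain
\[
\alpha_{k+1}\, a(u_h^{k+1}, u_h^{k+1} - u_h^k) + b(u_h^{k+1} - u_h^k, \psi_h^{k+1} - \psi_h^k) = \alpha_{k+1}\, F(u_h^{k+1} - u_h^k).
\]
Because $E$ is quadratic with $E(v) = \tfrac12 a(v,v) - F(v)$ and $a$ is symmetric, the standard polarization identity gives
\[
a(u_h^{k+1}, u_h^{k+1} - u_h^k) - F(u_h^{k+1} - u_h^k) = E(u_h^{k+1}) - E(u_h^k) + \tfrac{1}{2} a(u_h^{k+1} - u_h^k, u_h^{k+1} - u_h^k).
\]
Combining these two identities and using the coercivity (in particular, nonnegativity) of $a$ from~\cref{eq:coercivity_a}, I obtain
\[
\alpha_{k+1}\bigl[E(u_h^{k+1}) - E(u_h^k)\bigr] + b(u_h^{k+1} - u_h^k, \psi_h^{k+1} - \psi_h^k) \leq 0.
\]

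Next I need to identify $b(u_h^{k+1} - u_h^k, \psi_h^{k+1} - \psi_h^k)$ with the symmetric Bregman sum. I will test~\cref{eq:lvpp_g_1} at both levels $k$ and $k+1$ with the admissible choice $w_h = \psi_h^{k+1} - \psi_h^k \in W_h$ and subtract, yielding
\[
b(u_h^{k+1} - u_h^k, \psi_h^{k+1} - \psi_h^k) = \bigl(\nabla \mathcal{R}^*(\psi_h^{k+1}) - \nabla \mathcal{R}^*(\psi_h^k),\, \psi_h^{k+1} - \psi_h^k\bigr)_{\Omega_d}.
\]
A direct expansion from the definition~\cref{eq:dual_divergence} shows that the symmetrized dual divergence satisfies pointwise
\[
\mathcal{D}^*(\psi_h^{k+1}, \psi_h^k) + \mathcal{D}^*(\psi_h^k, \psi_h^{k+1}) = \bigl(\nabla \mathcal{R}^*(\psi_h^{k+1}) - \nabla \mathcal{R}^*(\psi_h^k)\bigr)\bigl(\psi_h^{k+1} - \psi_h^k\bigr),
\]
so integrating over $\Omega_d$ matches the bilinear term exactly. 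Substituting this back into the preceding inequality and dividing by $\alpha_{k+1} > 0$ produces~\cref{eq:energy_decreasing_discrete}.

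I do not expect any real obstacle: the argument is purely algebraic once the right test functions are plugged in. The only subtle point is making sure that $u_h^{k+1} - u_h^k \in V_h$ and $\psi_h^{k+1} - \psi_h^k \in W_h$ are indeed admissible test functions — which holds because $V_h$ and $W_h$ are linear subspaces — and that the symmetric cancellation of the $\mathcal{R}^*(\psi_h^{k+1}) - \mathcal{R}^*(\psi_h^k)$ terms in the Bregman sum is recognized without needing the three-points identity. Alternatively, one could recover the same formula by applying the three-points identity~\cref{eq:three_point_iden_star} for $\mathcal{D}^*$ twice (with roles of the arguments swapped) and adding.
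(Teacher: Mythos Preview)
Your proof is correct and follows essentially the same route as the paper: test~\cref{eq:lvpp_g_0} with $u_h^{k+1}-u_h^k$, test~\cref{eq:lvpp_g_1} at consecutive steps with $\psi_h^{k+1}-\psi_h^k$, and identify the resulting pairing with the symmetrized dual Bregman divergence. The only cosmetic differences are that the paper invokes convexity of $E$ where you use the (equivalent, for quadratic $E$) polarization identity plus nonnegativity of $a$, and the paper cites the three-points identity~\cref{eq:three_point_iden_star} where you expand $\mathcal{D}^*(\psi_h^{k+1},\psi_h^k)+\mathcal{D}^*(\psi_h^k,\psi_h^{k+1})$ directly from~\cref{eq:dual_divergence}.
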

\rami{\begin{proof}
Utilizing the coercivity of $a$ \eqref{eq:coercivity_a} and \eqref{eq:lvpp_g_0}--\eqref{eq:lvpp_g_1}, we directly observe that 
\begin{align}
  E(u_h^{k+1}) & \leq E(u_h^k) + \langle E'(u_h^{k+1}), u_h^{k+1} - u_h^k \rangle \\ 
  & =  E(u_h^k) + \frac{1}{\alpha_{k+1}}b (u_h^{k+1} - u_h^k, \psi_h^k - \psi_h^{k+1}) \nonumber  \\ 
& =  E(u_h^k)  -  \frac{1}{\alpha_{k+1}} (\nabla \mathcal{R}^*(\psi_h^{k+1}) -\nabla \mathcal{R}^*(\psi_h^{k}) , \psi_h^{k+1} - \psi_h^k)_{\Omega_d} \nonumber. 
\end{align}
The three points identity \eqref{eq:three_point_iden_star} provides the result. 
\end{proof}}
\subsection{Best approximation error estimates}
We now derive a priori best approximation estimates on the error between the discrete iterates $u_h^k$ of \Cref{alg:main_alg_discrete} and the true solution $u^*$ of \cref{eq:general_problem}. In addition, we derive estimates between the dual variables $\lambda_h^k$ and $\lambda^*$ and the observables $o_h^k$ and $o^*$.
These estimates yield convergence rates and a general mesh-independence property, even for low-regularity solutions.

\rami{
\begin{theorem}[Best approximation of the primal iterates $u_h^\ell$]
\label{thm:BAEP}
Let $u^* \in  K$ be the solution to \cref{eq:general_problem},  let $\{ u_h^k\}_{k=1}^\ell $ be defined via \Cref{alg:main_alg_discrete}. Assume we are in the setting outlined in \Cref{sec:general_setup,sec:compatibility}. Then the following estimate is valid for every $\ell \geq 1$:
\begin{multline}\label{eq:main_estimate}
 \frac{\nu}{2} \| u^* - u_h^\ell\|_V^2
        \leq
  \frac{( \mathcal{D}(o^*, o_h^0) , 1 )_{\Omega_d}}{\sum_{k=1}^\ell \alpha_k}
+ \frac{M^2}{2\nu} \|\Pi_h u^* - u^* \|^2_V \\   + |\langle E'(u^*), \Pi_h u^* - u^* \rangle|  +   \inf_{v \in K} |\langle E'(u^*) , v - u_h^\ell \rangle|, 
\end{multline}
where we recall that $o^* = Bu^*$, $o_h^0= \nabla \mathcal{R}^*(\psi_h^0)$, and $\Pi_h$ is the Fortin map given in \eqref{eq:fortin_map}. 
\end{theorem}
\begin{proof}
Observe that with the coercivity property of $a$ \eqref{eq:coercivity_a}, we have 
for any $u,v \in V$
\begin{equation}
E(u) - E(v) =  \langle E'(v), u- v \rangle + \frac{1}{2} a(u - v, u-v) \geq \langle E'(u), u- v \rangle  + \frac{\nu}{2} \|u - v\|^2_V,   \label{eq:strong_convexity_1}
\end{equation}
Setting $v = u_h^{k}$ above, we arrive at the following inequality for any $u \in V$:
\begin{align}
E(u) &  \geq E(u_h^{k}) + \langle E'(u_h^{k}), u -u_h^{k} \rangle  + \frac\nu2 \|u -  u_h^{k}\|_V^2  \nonumber \\ 
& = E(u_h^{k}) + \langle E'(u_h^{k}), \Pi_h u  -u_h^{k} \rangle  +\langle E'(u_h^{k}), u -\Pi_h u \rangle  + \frac\nu2 \|u -  u_h^{k}\|_V^2. \nonumber
\end{align}
We now rewrite the second term above using \eqref{eq:lvpp_g_0},  the definition of the dual variable $\lambda_h^k$ \eqref{eq:discrete_Lagrange}, and the Fortin map \eqref{eq:fortin_map}. We obtain
\begin{equation}
\langle E'(u_h^{k}), \Pi_h u -u_h^{k} \rangle =  b( \Pi_h u - u_h^k,\lambda_h^k) = b( u - u_h^k, \lambda_h^k). 
\end{equation}
Proceeding, we use some ideas from \cite{eckstein1998approximate,fu2025proximal}. Namely, we use \eqref{eq:lvpp_g_1}, the definition of $o_h^k$ \eqref{eq:tilde_u}, the observation that $\psi_h^k = \nabla \mathcal{R} (o_h^k)$ from \eqref{eq:conj_grad_inverse}, and the three points identity \eqref{eq:three_point_iden_star}. This yields
\begin{align} \label{eq:three_points_primal}
 b( u - u_h^k, \lambda_h^k) = (Bu - Bu_h^k, \lambda_h^k)_{\Omega_d}&  = 
 \frac{1}{\alpha_k}(B u - o_h^k,  \nabla \mathcal{R}(o_h^{k-1}) - \nabla \mathcal{R}(o_h^{k}))_{\Omega_d} \\ 
 & = \frac{1}{\alpha_k} ( \mathcal{D}(Bu, o_h^k) - \mathcal{D}(Bu,o_h^{k-1}) + \mathcal{D}(o_h^k,o_h^{k-1}),1)_{\Omega_d}. \nonumber
\end{align}
Collecting the above and using $\mathcal{D}(o_h^k,o_h^{k-1}) \geq 0$, we obtain the following identity: 
\begin{multline}
E(u_h^k) - E(u) + \frac{1}{\alpha_k} ( \mathcal{D}(Bu, o_h^k) - \mathcal{D}(Bu,o_h^{k-1}), 1)_{\Omega_d}  + \frac\nu2 \|u -  u_h^{k}\|_V^2 \\ \leq \langle E'(u_h^k), \Pi_h u -u\rangle. 
\end{multline}
Proceeding, we multiply by $\alpha^k$, sum from $k= 1$ to $k = \ell$, and use the energy dissipation property \Cref{lemma:energy_dissipation}. This yields 
 \begin{multline}
     \label{eq:Step0_1}
 (E(u_h^\ell) - E(u)) \sum_{k=1}^{\ell} \alpha_{k}  + \frac{\nu}{2}\sum_{k=1}^{\ell} \alpha_{k}\| u -  u_h^{k}\|_V^2 + ( \mathcal{D}(Bu, o_h^\ell) ,1 )_{\Omega_d}\\
 \leq
( \mathcal{D}(Bu, o_h^0) , 1 )_{\Omega_d} + \sum_{k=1}^{\ell} \alpha_{k}  \langle E'(u_h^{k}),  \Pi_h u - u \rangle . 
 \end{multline} 
 We now define the weighted average $\overline{u}_h^\ell = \sum_{k=1}^{\ell} \alpha_{k} u_h^{k} / \sum_{k=1}^{\ell} \alpha_{k}$ and use Jensen's inequality
\begin{align}
\| u  -  \overline{u}_h^\ell\|_V^2 
\leq \frac{\sum_{k=1}^{\ell} \alpha_{k} \| u -  u_h^{k}\|_V^2}{\sum_{k=1}^{\ell} \alpha_{k}}.
\end{align}
Thus, upon dividing~\cref{eq:Step0_1} by $\sum_{k=1}^\ell \alpha_k$ then adding and subtracting $\langle E'(u), \Pi_h u -u\rangle$, we obtain 
\begin{multline}
    E(u_h^\ell) - E(u) + \frac12 \nu \| u -  \overline{u}_h^\ell\|_V^2  + \frac{( \mathcal{D}(Bu, o_h^\ell) ,1 )_{\Omega_d}}{\sum_{k=1}^\ell \alpha_k}\\  \leq  \frac{( \mathcal{D}(Bu, o_h^0) , 1 )_{\Omega_d}}{\sum_{k=1}^\ell \alpha_k}
 + \langle E'(\overline u_h^\ell) - E'(u),  \Pi_h u  - u \rangle + \langle E'(u) , \Pi_h u  - u\rangle. 
\end{multline}
 Proceeding, 
 we use the continuity of $a$ \eqref{eq:coercivity_a} and Young's inequality to bound 
\begin{align}
\langle E'(\overline u_h^\ell)  -E'(u),  \Pi_h u  - u \rangle \leq M \| \overline u_h^\ell - u\|_{V}\| \Pi_h u  - u\|_V  \leq \frac\nu2 \| \overline u_h^\ell - u\|_{V}^2 + \frac{M^2}{2\nu}  \| \Pi_h u  - u\|_V^2. \nonumber 
\end{align}
Combining the above estimates yields 
\begin{multline}
\label{eq:Step11}
    E(u_h^\ell) - E(u) + \frac{( \mathcal{D}(Bu, o_h^\ell) ,1 )_{\Omega_d}}{\sum_{k=1}^\ell \alpha_k}\\ \leq  \frac{( \mathcal{D}(Bu, o_h^0) , 1 )_{\Omega_d} }{\sum_{k=1}^\ell \alpha_k}
+ \frac{M^2}{2\nu}  \|\Pi_h u  - u \|^2_V  +   \langle E'(u) ,\Pi_h u - u \rangle. 
\end{multline}
At this stage,  we select $u = u^*$ and note that for any  $v \in K$, \cref{eq:strong_convexity_1,eq:VI} provide the bound 
\begin{align*}
    E(u_h^\ell) - E(u^\ast)
    +
    \langle E'(u^\ast), v - u_h^\ell \rangle 
    &\geq
    \langle E'(u^\ast), v - u^\ast \rangle
    + \frac{\nu}{2} \| u^* -  u_h^\ell\|_V^2
            \geq
    \frac{\nu}{2} \| u^* - u_h^\ell\|_V^2
    \,.
\end{align*}
Thus, adding $\langle E'(u^\ast), v - u_h^\ell \rangle$ to both sides of~\cref{eq:Step11} and recalling that $Bu^* = o^*$, we arrive at
\begin{multline}
 \frac{\nu}{2} \| u^* - u_h^\ell\|_V^2 + \frac{( \mathcal{D}(o^*, o_h^\ell) ,1 )_{\Omega_d}}{\sum_{k=1}^\ell \alpha_k}
    \leq
  \frac{( \mathcal{D}(o^*, o_h^0) , 1 )_{\Omega_d}}{\sum_{k=1}^\ell \alpha_k} + \frac{M^2}{2\nu}  \|\Pi_h u^* - u^* \|^2_V 
 \\  + |\langle E'(u^*) , \Pi_h u^* - u^* \rangle| + |\langle E'(u^*) , v - u_h^\ell \rangle|  . 
\end{multline}
The result follows because the choice $v$ was arbitrary and $\mathcal{D}(o^*, o_h^\ell) \geq 0$. 
\end{proof}
}

The next two results relate the error in $u_h^\ell$ to the errors in the dual variables $\lambda_h^\ell = (\psi_h^{\ell - 1}  - \psi_h^\ell)/\alpha^\ell$ and observables $o_h^\ell = \nabla \mathcal{R}^*(\psi_h^\ell)$.

\begin{lemma}
[Best approximation of the dual variables $\lambda_h^\ell$] \label{thm:conv_multipliers}
There exists a constant $\beta >0$, such that for any $\ell \geq 1$, \begin{equation} \label{eq:bap_multiplier}
\beta \|\lambda^* - \lambda_h^\ell\|_{Q'} \leq \sup_{v \in V} \frac{  | \langle E'(u^*),  v - \Pi_h v \rangle |}{\|v\|_{V}}  + M \|\Pi_h\| \|u^* - u_h^\ell\|_V. 
 \end{equation}
\end{lemma}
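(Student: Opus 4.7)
The plan is to use the continuous inf-sup condition~\cref{eq:inf_sup_continuous}, which (by density of $W$ in $Q'$) yields
\[
    \beta \, \| \lambda^\ast - \lambda_h^\ell \|_{Q'} \;\leq\; \sup_{v \in V} \frac{ |b(v, \lambda^\ast - \lambda_h^\ell)| }{\|v\|_V}\,,
\]
so the task reduces to bounding the right-hand side of this inequality by the claimed quantities. To this end, I will rewrite $b(v,\lambda^\ast)$ and $b(v,\lambda_h^\ell)$ in terms of $E'(u^\ast)$ and $E'(u_h^\ell)$, respectively.

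First, from $B'\lambda^\ast = E'(u^\ast)$ in $V'$ we directly obtain $b(v,\lambda^\ast) = \langle E'(u^\ast), v\rangle$ for every $v \in V$. Second, from the defining relation $\lambda_h^\ell = (\psi_h^{\ell-1} - \psi_h^\ell)/\alpha_\ell$ and equation~\cref{eq:lvpp_g_0} with $v_h = \Pi_h v$, I obtain
\[
    b(\Pi_h v,\lambda_h^\ell) \;=\; a(u_h^\ell,\Pi_h v) - F(\Pi_h v) \;=\; \langle E'(u_h^\ell),\Pi_h v\rangle.
\]
Since $\lambda_h^\ell \in W_h$, the Fortin property~\cref{eq:fortin_map} gives $b(v-\Pi_h v,\lambda_h^\ell) = 0$, so in fact $b(v,\lambda_h^\ell) = \langle E'(u_h^\ell),\Pi_h v\rangle$ for every $v \in V$.

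Subtracting the two identities and adding and subtracting $\langle E'(u^\ast),\Pi_h v\rangle$ yields the splitting
\[
    b(v,\lambda^\ast - \lambda_h^\ell) \;=\; \langle E'(u^\ast), v - \Pi_h v\rangle + \langle E'(u^\ast) - E'(u_h^\ell), \Pi_h v\rangle\,.
\]
The first term matches the first term in~\cref{eq:bap_multiplier} after dividing by $\|v\|_V$. For the second term, I use $\langle E'(u^\ast) - E'(u_h^\ell),\Pi_h v\rangle = a(u^\ast - u_h^\ell,\Pi_h v)$ together with the continuity bound $a(\cdot,\cdot) \leq M \|\cdot\|_V \|\cdot\|_V$ and the operator-norm bound $\|\Pi_h v\|_V \leq \|\Pi_h\| \,\|v\|_V$ to estimate it by $M\|\Pi_h\|\,\|u^\ast - u_h^\ell\|_V \|v\|_V$. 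Dividing by $\|v\|_V$, taking the supremum, and applying the triangle inequality deliver the claim.

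The only subtle step is the use of the Fortin operator to justify $b(v,\lambda_h^\ell) = b(\Pi_h v, \lambda_h^\ell)$, which is what allows the discrete Galerkin identity to be applied to an arbitrary $v \in V$; all remaining estimates are routine applications of continuity and the definitions.
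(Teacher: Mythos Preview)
Your proposal is correct and follows essentially the same approach as the paper's proof: both use the Fortin property to pass from $v$ to $\Pi_h v$ in the discrete multiplier term, invoke~\cref{eq:lvpp_g_0} to rewrite $b(\Pi_h v,\lambda_h^\ell)$ as $\langle E'(u_h^\ell),\Pi_h v\rangle$, split via $\langle E'(u^*),\Pi_h v\rangle$, and then apply the closed range/inf-sup bound $\beta\|\lambda^*-\lambda_h^\ell\|_{Q'}\le\|B'(\lambda^*-\lambda_h^\ell)\|_{V'}$ (which you invoke at the start rather than the end). The only minor notational point is that $b(v,\lambda^*)$ should be read as $\langle B'\lambda^*,v\rangle$, since $\lambda^*\in Q'$ need not lie in $W$; you effectively acknowledge this when you write $b(v,\lambda^*)=\langle E'(u^*),v\rangle$ via $B'\lambda^*=E'(u^*)$.
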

\begin{proof}
We first estimate $\|B'(\lambda^* - \lambda_h^\ell)\|_{V'}$. 
For any $v \in V$, we use the Fortin operator \cref{eq:fortin_map} and equation \cref{eq:lvpp_g_0} to write
\begin{align*} 
\langle B'( \lambda^* - \lambda_h^{\ell}), v \rangle 
& = \langle B'\lambda^*, v - \Pi_h v \rangle + \langle  B'\lambda^*,  \Pi_h v \rangle - \frac{1}{\alpha^\ell } b( v, \psi_h^{\ell - 1}  - \psi_h^\ell)  \\  
& = \langle E'(u^*), v - \Pi_h v \rangle + \langle E'(u^*), \Pi_h v \rangle - \frac{1}{\alpha^\ell } b( \Pi_h v, \psi_h^{\ell - 1}  - \psi_h^\ell) \\ 
& = \langle E'(u^*), v - \Pi_h v \rangle + \langle E'(u^*)  - E'(u_h^\ell), \Pi_h v \rangle. 
\end{align*}
Invoking the continuity of the bilinear form $a$ and the map $\Pi_h$, we obtain that 
\begin{align}\label{eq:B'_estimate}
\| B'( \lambda^* - \lambda_h^{\ell})\|_{V'}
\leq 
\sup_{v \in V} \frac{  | \langle E'(u^*),  v - \Pi_h v \rangle |}{\|v\|_{V}}  + M \|\Pi_h\| \|u^* - u_h^\ell\|_V. 
\end{align}
Since the map $B: V\rightarrow Q$ is surjective, it follows from the closed range Theorem, see e.g. \cite[Lemma A.40]{ern2004theory}, that there exists a constant $\beta >0 $ with 
\begin{equation}\label{eq:result_closed_range}
\beta \|\lambda^* - \lambda_h^\ell\|_{Q
'} \leq \| B'( \lambda^* - \lambda_h^{\ell})\|_{V'}. 
\end{equation}
Combining \cref{eq:B'_estimate} with \cref{eq:result_closed_range} finishes the proof.
\end{proof}

\begin{lemma}[Best approximation of the discrete observables $o_h^\ell$] \label{lemma:nonpoly_approx}
The following estimate holds for all $\ell \geq 1$:
\begin{align} \label{eq:estimate_tilde_general}
\|B u^*  - o_h^\ell \|_{L^2(\Omega_d)} \leq \|B(u^* - u_h^\ell )\|_{L^2(\Omega_d)} + \|(I- P_{h}) (B u_h^\ell - o_h^\ell)\|_{L^2(\Omega_d)}, 
\end{align}
where $P_{h}$ is the $L^2(\Omega_d;\R^m)$-projection operator onto $W_h$, defined by
\begin{equation}
\label{eq:L2Proj}
    (P_{h} o, w_h)_{\Omega_d}
    =
    (o, w_h)_{\Omega_d}
    ~\fa o \in L^2(\Omega_d;\R^m),~ w_h \in W_h
    .
\end{equation} 
\end{lemma}
\begin{proof}
This is a direct consequence of the triangle inequality and the observation that $P_{h} B u_h^\ell = P_{h} o_h^\ell$, which follows from \cref{eq:lvpp_g_1}. 
\end{proof}

\subsection{Convergence rates and mesh-independence}

We now derive abstract error convergence rates from \Cref{thm:BAEP} and \Cref{thm:conv_multipliers,lemma:nonpoly_approx}. 
For simplicity, we focus on the following spaces $V \subset H^1(\Omega)$ and $V \subset H^1(\Omega; \R^n)$. Hereafter, we do not explicitly differentiate between scalar and vector-valued spaces, using the notation for the scalar space while noting that the same results hold, mutatis mutandis, for vector-valued spaces.
\rami{
To derive error rates, we require the construction of a \textit{reconstruction operator} that allows us to control the last term in \eqref{eq:main_estimate}. In turn, we make the following general assumption, which is critical to the definition of this map. First, we motivate this assumption with a remark.  
\begin{remark}[Motivating \Cref{assumption:maptointerior}]
\Cref{eq:lvpp_g_1} allows the PG method to be viewed as a partially-\textit{nonconforming} finite element method in the sense that the approximations $\{u_h^\ell\}$ generally do not belong to the feasible set $K$, where the true solution $u^*$ resides.
Instead, \cref{eq:lvpp_g_1} characterizes an approximate feasible set $K_h \not\subset K$ containing the iterates $\{u_h^\ell\}$.
For example, consider the obstacle problem, \Cref{example:obstacle}, with the Shannon entropy from \Cref{example:shannon_entropy}, and suppose that $W_h = \mathbb{P}_0(\mathcal{T}_h)$.
Then, by choosing $w_h = \chi_T$ (the indicator function of an element $T$), we see that 
\begin{equation}
\label{eq:ObstacleK_h}
 u_h^\ell \in K_h = \left \{ v_h \in V_h \mid \int_{K} (v_h -\phi) \ge 0  \right \} .
\end{equation}
As expected from the analysis of \textit{nonconforming} discretizations of obstacle problems \cite[Section 5.2.1]{bartels2015numerical}, one then requires an operator $\mathcal{E}_h : K_h \rightarrow K$ with suitable approximation properties. 
See \Cref{sec:obstacle_problem} for an explicit construction of the reconstruction operator for $K_h$ defined in~\cref{eq:ObstacleK_h}. 
\end{remark}
\begin{assumption}[Reconstruction operator]\label{assumption:maptointerior}
There exists a continuous map $\mathcal{E}_h: V \rightarrow V$ with the property that
\begin{subequations}
\begin{equation}
\rami{\mathcal{E}_h: K \cup K_h \rightarrow  K}, \end{equation}
where     
\begin{equation}
K_h :=
\{ 
v_h \in V_h \mid
P_{h} B v_h \in P_{h}(\mathcal{O})
\},
\label{eq:general_set_Kh}
\end{equation}
and $P_{h} \colon L^2(\Omega_d;\R^m) \to W_h$ is the $L^2$-projector onto $W_h$ \cref{eq:L2Proj}.
\end{subequations}
\end{assumption}
The set $K_h$ contains functions $u_h\in V_h$ such that the $L^2$-projection of $B u_h \in W_h$ matches the $L^2$-projection of a constrained observable $o \in \mathcal{O}$. 
It is constructed to ensure that $u_h^k \in K_h$ for all iterations $k$; see \cref{eq:lvpp_g_1}.
To derive error estimates, we require the following quasi-interpolation assumption on the stability and approximation of the Fortin and reconstruction operators. The decomposition of the reconstruction operator given in~\cref{eq:EnrichingMapDecomposition} reflects the constructions used in \Cref{sec:error_rates,sec:error_rates_sig}, below. }
\begin{assumption} \label{assumption:err_rates}
Assume that the Fortin operator satisfying \cref{eq:fortin_map} is stable in the sense that  
\begin{subequations}
\label{eq:stability_fortin_general}
\begin{align}
\|\Pi_h v \|_{L^2(\Omega)} & \lesssim \|v\|_{L^2(\Omega)} + h \|\nabla v\|_{L^2(\Omega)}, \label{eq:fortin_stability_assump_L2} \\ 
\|\nabla (\Pi_h v)\|_{L^2(\Omega)} &  \lesssim \|\nabla v\|_{L^2(\Omega)} , \label{eq:fortin_stability_assump_H1}
\end{align} 
\end{subequations}
for all $v \in V$. Further, assume that the reconstruction operator  $\mathcal{E}_h$  of \Cref{assumption:maptointerior} is affine linear:
\begin{equation}
\label{eq:EnrichingMapDecomposition}
    \mathcal{E}_h w = \mathcal{C}_h w + \varepsilon, 
\end{equation}
where $\mathcal{C}_h: V \rightarrow V_h$  is linear and $\varepsilon \in V$. 

For $0 \leq s \leq 1$, $0 \leq t \leq 1$, and $w \in H^{1+s}(\Omega) \cap V$, assume that 
\begin{subequations}
\label{eq:map_rates}
\begin{align}
    \| w - \Pi_h w \|_{L^2(\Omega)} + h \|\nabla (w - \Pi_hw)\|_{L^2(\Omega)} 
    & \lesssim 
    h^{1+s} | w |_{H^{1+s}(\Omega)}, \label{eq:fortin_rates} \\ 
      \| w - \mathcal{C}_h w \|_{L^2(\Omega)} + h  \|\nabla (w - \mathcal{C}_h w)\|_{L^2(\Omega)} 
    &  \lesssim  
    h^{1+s} | w |_{H^{1+s}(\Omega)} ,   \label{eq:map_Ch_rates} \\ 
   \|\varepsilon \|_{H^t(\Omega)} & \lesssim h^{1+s-t} .\label{eq:map_Eh_rates_r}
    \end{align}
\end{subequations}
\end{assumption}

\begin{theorem}[Convergence rates] \label{cor:err_rate_general}
 Assume that $u^* \in H^{1+s}(\Omega)$ and $E'(u^*)\in H^{r-1}(\Omega)$ for some $s,r \in (0,1]$, and let $\|\psi_h^0\|_{L^\infty(\Omega_d;\R^m)} \lesssim 1$. 
 Let \Cref{assumption:R,assumption:maptointerior,assumption:err_rates} hold. Then 
\begin{equation}
\label{eq:GeneralConverge}
\| u^* - u_h^\ell\|_{H^1(\Omega)}^2 + \|\lambda^* - \lambda_h^\ell \|^2_{Q'} \lesssim \frac{C_{\mathrm{stab}}}{\sum_{k=1}^\ell \alpha_k}
+ C_{\mathrm{reg}}\, h^{2\cdot\min\{r,s\}}
\end{equation}
for all $\ell \geq 1$ and $h > 0$, where $C_{\mathrm{stab}}$ and $C_{\mathrm{reg}}$ are positive constants independent of $h$, $\ell$, and the parameters $\alpha_k$. 
\end{theorem}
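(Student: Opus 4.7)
The plan is to specialize the abstract estimates of \Cref{thm:BAEP} and \Cref{thm:conv_multipliers} to the quasi-interpolation setting of \Cref{assumption:err_rates} and combine them. Throughout, I would fix $\psi = \psi^\ast := (\nabla\mathcal{R}\circ B)(\mathcal{E}_h u^\ast) \in \Psi_h$ so that $\mathcal{U}_h(\psi^\ast) = \Pi_h\mathcal{E}_h u^\ast$ is a concrete approximation of $u^\ast$. \Cref{lemma:bounding_initial_dist} immediately handles the optimization contribution, bounding $(\mathcal{D}^\ast(\psi_h^0,\psi^\ast),1)_{\Omega_d}$ by $C_{\mathrm{stab}}$ and producing the first term on the right-hand side of \cref{eq:GeneralConverge}.

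For the discretization contribution $\|u^\ast - \mathcal{U}_h(\psi^\ast)\|_V^2$, I would use the affine decomposition $\mathcal{E}_h w = \mathcal{C}_h w + \varepsilon$ and split
\[
u^\ast - \mathcal{U}_h(\psi^\ast) = (u^\ast - \Pi_h u^\ast) + \Pi_h(u^\ast - \mathcal{C}_h u^\ast) - \Pi_h\varepsilon.
\]
The Fortin stability \cref{eq:stability_fortin_general} together with the approximation estimates \cref{eq:map_rates} yields $\|u^\ast - \mathcal{U}_h(\psi^\ast)\|_V \lesssim h^s(|u^\ast|_{H^{1+s}}+1)$. The hard part is the consistency term $|\langle E'(u^\ast), u_h^\ell - v + u^\ast - \mathcal{U}_h(\psi^\ast)\rangle|$, since the naive choice $v=u^\ast$ leaves $u_h^\ell$ in the bracket and a crude $V'$-pairing only yields an $O(1)$ constant after absorbing $\nu\|u^\ast-u_h^\ell\|_V^2/2$. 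I would instead take $v = \mathcal{E}_h u_h^\ell \in K$ (admissible by \Cref{assumption:maptointerior} since $u_h^\ell \in K_h$), which converts $u_h^\ell - v$ into $u_h^\ell - \mathcal{C}_h u_h^\ell - \varepsilon$. When $\mathcal{C}_h$ reproduces $V_h$ (as in the concrete constructions of \Cref{sec:error_rates,sec:error_rates_sig}), this simplifies to $-\varepsilon$, and the whole bracket collapses to a deterministic sum of interpolation residuals. Pairing with $E'(u^\ast) \in H^{r-1}(\Omega)$ through $H^{r-1}$/$H^{1-r}$ duality together with Sobolev interpolation between $L^2$ and $H^1$ (again invoking \cref{eq:stability_fortin_general,eq:map_rates}) gives a bound of order $h^{s+r}$. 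Since $h^{2s}+h^{s+r} \lesssim h^{2\min(r,s)}$ for $h \leq 1$, the primal half of \cref{eq:GeneralConverge} follows.

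For the dual estimate I would invoke \Cref{thm:conv_multipliers}. The Fortin residual $\sup_{v\in V}|\langle E'(u^\ast), v - \Pi_h v\rangle|/\|v\|_V$ is again controlled via $H^{r-1}$/$H^{1-r}$ duality: Sobolev interpolation yields $\|v-\Pi_hv\|_{H^{1-r}} \lesssim \|v-\Pi_hv\|_{L^2}^{r}\|v-\Pi_hv\|_{H^1}^{1-r} \lesssim h^r\|v\|_{H^1}$, by combining \cref{eq:fortin_stability_assump_H1} with \cref{eq:fortin_rates} at $s=0$, so the supremum is $\lesssim h^r$. Adding the just-proven primal bound and squaring gives $\|\lambda^\ast - \lambda_h^\ell\|_{Q'}^2 \lesssim h^{2r} + \|u^\ast - u_h^\ell\|_V^2$, which absorbs into the right-hand side of \cref{eq:GeneralConverge}. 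The main obstacle is the consistency term of \Cref{thm:BAEP}: extracting a genuine $h^r$ factor out of $E'(u^\ast)$ forces the bracket to reduce to a static interpolation residual, which in turn demands that $v = \mathcal{E}_h u_h^\ell$ cooperate with the Fortin/enriching pair $(\Pi_h,\mathcal{E}_h)$ and implicitly relies on structural properties of $\mathcal{C}_h$ beyond the stability and approximation already listed in \Cref{assumption:err_rates}.
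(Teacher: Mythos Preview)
Your overall strategy --- specialize \Cref{thm:BAEP} with $\psi = \psi^\ast$, invoke \Cref{lemma:bounding_initial_dist}, choose $v = \mathcal{E}_h u_h^\ell$, and finish with \Cref{thm:conv_multipliers} --- is exactly the paper's route, and your treatment of the optimization term, of $\|u^\ast - \mathcal{U}_h(\psi^\ast)\|_{H^1}$, and of the dual estimate is correct.

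The gap is in the consistency term. You reduce $u_h^\ell - v$ to $u_h^\ell - \mathcal{C}_h u_h^\ell - \varepsilon$ and then \emph{assume} $\mathcal{C}_h u_h^\ell = u_h^\ell$, so the bracket becomes a static interpolation residual. This reproduction property is neither part of \Cref{assumption:err_rates} nor true in the paper's concrete constructions: the modified Cl\'ement operators in \cref{eq:clement_case_1}, \cref{eq:clemenet_case_2}, and \cref{eq:clement_sig} are built from local averages and map into $\mathbb{P}_1(\mathcal{T}_h)\cap H^1(\Omega)$, so they reproduce neither bubble functions nor general piecewise-linear functions in $V_h$. You flag this yourself in the last sentence, but as stated the argument does not close under the hypotheses of the theorem.

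The fix, and what the paper does, is to add and subtract $u^\ast$ instead of invoking any projection property:
\[
u_h^\ell - \mathcal{E}_h u_h^\ell
= (I - \mathcal{C}_h)(u_h^\ell - u^\ast) + (u^\ast - \mathcal{E}_h u^\ast).
\]
The second piece is a static interpolation residual, handled exactly as you describe. For the first piece one applies \cref{eq:map_Ch_rates} at regularity $s=0$ (together with space interpolation between $L^2$ and $H^1$) to obtain
\[
\|(I - \mathcal{C}_h)(u_h^\ell - u^\ast)\|_{H^{1-r}(\Omega)} \lesssim h^r\,\|u_h^\ell - u^\ast\|_{H^1(\Omega)}.
\]
After pairing with $E'(u^\ast)\in H^{r-1}(\Omega)$ this produces a term of the form $c\,h^r\|E'(u^\ast)\|_{H^{r-1}}\|u_h^\ell - u^\ast\|_{H^1}$, which is split by Young's inequality: half of $\nu\|u^\ast - u_h^\ell\|_{H^1}^2$ is absorbed back into the left-hand side of \cref{eq:main_estimate}, and the remainder is $O(h^{2r})$. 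No extra structural assumption on $\mathcal{C}_h$ beyond \Cref{assumption:err_rates} is needed.
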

\begin{proof}
Since $E'(u^*) \in H^{r-1}(\Omega)$, we \rami{further bound the last two terms in \cref{eq:main_estimate} by $\|  E'(u^*) \|_{H^{r-1}(\Omega)}  \|  \Pi_h u^* -  u^* \|_{H^{1-r}(\Omega)} $ and by $\|  E'(u^*) \|_{H^{r-1}(\Omega)}  \| v - u_h^\ell \|_{H^{1-r}(\Omega)}$ respectively}. Thus, we obtain 
\rami{
\begin{multline}\label{eq:main_estimate_1}
 \frac{\nu}{2} \| u^* - u_h^\ell\|_V^2
        \leq
  \frac{( \mathcal{D}(o^*, o_h^0) , 1 )_{\Omega_d}}{\sum_{k=1}^\ell \alpha_k}
+ \frac{M^2}{2\nu}  \|\Pi_h u^* - u^* \|^2_{H^1(\Omega)} \\   + \|  E'(u^*) \|_{H^{r-1}(\Omega)}  \|  \Pi_h u^* -  u^* \|_{H^{1-r}(\Omega)} +  \inf_{v \in K} \|  E'(u^*) \|_{H^{r-1}(\Omega)}  \| v - u_h^\ell \|_{H^{1-r}(\Omega)}. 
\end{multline}
}
We now proceed by bounding each term in  \cref{eq:main_estimate_1}. \rami{Recalling  that  $o_h^0 = \nabla \mathcal{R}^*(\psi_h^0)$,  $\|\psi_h^0\|_{L^\infty(\Omega_d;\R^m)} \lesssim 1$, and the definition of $\mathcal{D}$ \eqref{eq:Bregman}, it readily follows that
\begin{equation}
    \mathcal{D}(o^*, o_h^0)   \leq C_{\mathrm{stab}},  \label{eq:bound_initial_distance}
\end{equation}
for some constant $C_{\mathrm{stab}}$ independent of $h, \ell$ and $\alpha_k$ since $\mathcal{R}$ is continuous on $\mathcal{O}$ and $\nabla \mathcal{R}^*$ is continuous on $L^\infty(\Omega_d;\R^m)$.}

\rami{Proceeding, \Cref{assumption:err_rates}  directly  provides the  bound 
\begin{align}
\|u^* - \Pi_h u^* \|_{H^1(\Omega)}^2  \lesssim  h^{2s} |u^*|^2_{H^{1+s}(\Omega)}.  \label{eq:bound_first_term_err}
\end{align}
} 
To handle the \rami{$\|u^* - \Pi_h u^*\|_{H^{1-r}(\Omega)}$} term in~\cref{eq:main_estimate_1}, we first note that from space interpolation between $L^2(\Omega)$ and $H^1(\Omega)$ (see, e.g., \cite[Chapter 14, Proposition 14.1.5]{brenner2008mathematical}) and \cref{eq:fortin_rates}, we obtain that for any $w \in H^{1+s}(\Omega)$, 
\begin{align}
\|w - \Pi_h w\|_{H^{t}(\Omega)} + \|w - \mathcal{C}_h w\|_{H^{t}(\Omega)} \lesssim 
h^{1+s-t } |w|_{H^{1+s}(\Omega)},  \quad s, t \in [0,1].  \label{eq:interpolation_fortin}
\end{align}
\rami{
Therefore, we obtain that 
\begin{equation}  \label{eq:bound_second_term_err}
\|\Pi_h u^* - u^*\|_{H^{1-r}(\Omega)} \lesssim h^{r+s} |u^*|_{H^{1+s}(\Omega)}.  
\end{equation} 
}
 It remains to estimate the term in \cref{eq:main_estimate_1} involving $\|v - u_h^\ell\|_{H^{1-r}(\Omega)}$. To this end, we choose $v = \mathcal{E}_h(u_h^\ell)$. This is a valid choice since $u_h^\ell \in K_h$ by \cref{eq:lvpp_g_1} and $\mathcal E_h u_h^\ell \in K$ by \Cref{assumption:maptointerior}. With the help of \cref{eq:map_rates} \rami{and \eqref{eq:interpolation_fortin}}, we now formulate the following estimate:
\begin{align} \label{eq:bound_third_term_err}
\|u_h^\ell -\mathcal{E}_h u_h^\ell \|_{H^{1-r}(\Omega)}
& =  \|(u_h^\ell - u^*) -\mathcal{C}_h ( u_h^\ell - u^*) + (u^* -\mathcal{E}_h u^*)  \|_{H^{1-r}(\Omega)}  \\
&  \nonumber
\leq \|(u_h^\ell - u^*) -\mathcal{C}_h ( u_h^\ell - u^*)\|_{H^{1-r}(\Omega)}  + \|u^* -\mathcal{E}_h u^*  \|_{H^{1-r}(\Omega)} \\ 
& \nonumber
\leq c h^r \|\nabla (u_h^\ell - u^*)\|_{L^2(\Omega)} + \tilde c h^{r+s} (|u^*|_{H^{1+s}(\Omega)} + 1)\\ 
& \nonumber 
\leq \frac14 \nu   \|E'(u^*)\|_{H^{r-1}(\Omega)}^{-1}   \|\nabla (u_h^\ell - u^*)\|_{L^2(\Omega)}^2 + c^2  \nu^{-1}  \|E'(u^*)\|_{H^{r-1}(\Omega)} h^{2r} \\ \nonumber &  + \tilde c h^{r+s} (|u^*|_{H^{1+s}(\Omega)} + 1), 
\end{align}
where $c$ and $\tilde c$ are mesh-independent constants.  
Incorporating \cref{eq:bound_initial_distance}, \cref{eq:bound_first_term_err}, \cref{eq:bound_second_term_err}, and \cref{eq:bound_third_term_err} into \cref{eq:main_estimate_1} yields the required bound on $\|u_h^\ell - u^*\|_{H^1(\Omega)}$.  

The estimate on $\|\lambda^* - \lambda_h^\ell \|_{H^{-1}(\Omega)}$ follows from \Cref{thm:conv_multipliers}.
In particular, the first term in \cref{eq:bap_multiplier} is bounded by:
$$\sup_{v \in H^1(\Omega)} \frac{  | \langle E'(u^*),  v - \Pi_h v \rangle |}{\|v\|_{H^1(\Omega)}} \leq \sup_{v \in H^1(\Omega)} \frac{  \|  E'(u^*) \|_{H^{r-1}(\Omega)} \|  v - \Pi_h v\|_{H^{1-r}(\Omega)}}{\|v\|_{H^1(\Omega)}} \lesssim  h^{r}, $$
using \cref{eq:interpolation_fortin} for the second inequality.  
\end{proof}

The final result of this section follows from the fact that the approximation error in~\cref{eq:GeneralConverge} is controlled by independent optimization and discretization error terms, each depending only on $\ell$ and $h$, respectively.

\begin{corollary}[Asymptotic mesh-independence]
\label{cor:WAMI}
Let $\epsilon > 0$.
Under the assumptions of~\Cref{cor:err_rate_general}, there exists a critical mesh size $h_\epsilon > 0$ and iteration number $\ell_\epsilon \geq 1$ such that
    \begin{equation} \label{eq:mesh_independ}
        \| u^\ast -  u^\ell_h \|_V
        +
        \| \lambda^\ast -  \lambda^\ell_h \|_{Q'}
        \leq
        \epsilon
        ~\fa
        0 < h \leq h_\epsilon
        \text{ and }
        \ell \geq \ell_\epsilon
        \,.
    \end{equation}
\end{corollary}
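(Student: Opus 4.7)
The plan is to read off the conclusion directly from the error estimate \cref{eq:GeneralConverge} in \Cref{cor:err_rate_general} by exploiting the fact that the right-hand side splits into two terms, one depending only on $\ell$ (through the partial sums $\sum_{k=1}^{\ell}\alpha_k$) and one depending only on $h$. The key structural observation is that this decoupling allows the two sources of error to be controlled independently.

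First, I would invoke \Cref{cor:err_rate_general} to obtain
\[
\| u^* - u_h^\ell\|_{H^1(\Omega)}^2 + \|\lambda^* - \lambda_h^\ell \|^2_{Q'}
\leq
\frac{C_{\mathrm{stab}}}{\sum_{k=1}^\ell \alpha_k} + C_{\mathrm{reg}}\,h^{2\cdot\min\{r,s\}},
\]
where the constants are independent of $h$ and $\ell$. Since $V \subset H^1(\Omega)$ with continuous embedding, the left-hand side controls $\|u^*-u_h^\ell\|_V^2 + \|\lambda^*-\lambda_h^\ell\|_{Q'}^2$ up to a fixed constant.

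Next, given $\epsilon > 0$, I would choose $h_\epsilon > 0$ small enough that $C_{\mathrm{reg}}\,h_\epsilon^{2\min\{r,s\}} \leq \epsilon^2/4$, which is possible because $\min\{r,s\} > 0$. For the optimization term, I would use the standing assumption (introduced in \Cref{sec:Preliminaries} when stating the LVPP scheme) that the sequence $\{\alpha_k\}$ is \emph{unsummable}, i.e.\ $\sum_{k=1}^{\infty} \alpha_k = +\infty$. This ensures that $\sum_{k=1}^\ell \alpha_k \to \infty$ as $\ell\to\infty$, so I can pick $\ell_\epsilon \geq 1$ large enough that $C_{\mathrm{stab}}/\sum_{k=1}^{\ell_\epsilon}\alpha_k \leq \epsilon^2/4$. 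Combining these two choices and using $\sqrt{a+b}\leq \sqrt{a}+\sqrt{b}$ together with the elementary inequality $\sqrt{x}+\sqrt{y}\leq \sqrt{2(x+y)}$ produces the bound~\cref{eq:mesh_independ}, possibly after absorbing the fixed embedding constant into a mild rescaling of $h_\epsilon$ and $\ell_\epsilon$.

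There is no real obstacle here: the proof is essentially a three-line bookkeeping argument once \Cref{cor:err_rate_general} is in hand. The only subtlety worth flagging explicitly is that $h_\epsilon$ and $\ell_\epsilon$ may be chosen \emph{independently} of one another — this is the content of ``mesh-independence,'' and it is exactly the decoupled structure of the right-hand side of~\cref{eq:GeneralConverge} that makes it possible. In particular, the iteration count needed to reach tolerance $\epsilon/\sqrt{2}$ in the optimization error does not deteriorate as $h \to 0$, which is the property that distinguishes PG from discretization-dependent alternatives.
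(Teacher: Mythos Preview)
Your proposal is correct and matches the paper's approach: the paper does not give a formal proof of this corollary at all, stating only that it ``follows from the fact that the approximation error in~\cref{eq:GeneralConverge} is controlled by independent optimization and discretization error terms, each depending only on $\ell$ and $h$, respectively.'' Your write-up simply fills in the obvious details (choosing $h_\epsilon$ and $\ell_\epsilon$ separately, invoking unsummability of $\{\alpha_k\}$), and the only cosmetic point is that in this setting the $V$-norm and the $H^1(\Omega)$-norm are equivalent (via Poincar\'e or Korn), so no embedding constant actually needs to be absorbed.
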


\subsection{Discussion of results}

\Cref{thm:BAEP,cor:err_rate_general} rigorously demonstrate three important features of the PG method, \Cref{alg:main_alg_discrete}, for the first time.
To date, these features have only been observed numerically \cite{keith2023proximal,fu2024locally,dokken2025latent,papadopoulos2024hierarchical}.
\begin{itemize}[leftmargin=*]
    \item 
    First, the PG iterates $u^\ell_h$ converge to the true solution $u^\ast$ as $\ell\to \infty$ and $h \to 0$, even with bounded proximity parameters, $\alpha_k \leq \textrm{const.}$
    This stands in contrast to penalty \cite[Chapter 1.7]{Glowinski1984} and interior point methods \cite[Chapter 19]{nocedal2006numerical} --- and even augmented Lagrangian methods in infinite-dimensional spaces \cite{ito1990augmented,antil2023alesqp} --- which all require taking a relaxation parameter to a singular limit.
            \item 
    Second, the number of iterations is asymptotically independent of the mesh size.
    More specifically, \Cref{cor:WAMI} shows that a user can guarantee convergence to any desired accuracy by independently selecting the mesh size~$h$ and iteration count~$\ell$.
    \rami{Asymptotic mesh-independence is a desirable feature of a numerical method \cite{schwedes2017mesh}, often observed in, e.g., Newton's method \cite{allgower1986mesh,weiser2005asymptotic}.
    Notably, asymptotic mesh-independence is sometimes witnessed in the primal-dual active set method \cite{hintermuller2002primal}, but for a more restrictive class of problems than considered here \cite{hintermuller2004mesh}.}
    \item 
    Finally, PG can be applied without modification to problems with low-regularity solutions or multipliers.
    Indeed, \Cref{cor:err_rate_general} provides sufficient conditions on $\Pi_h$ and $\mathcal{E}_h$ to obtain optimal convergence rates in $h$ depending naturally on the solution $u^\ast$ and Fr\'echet Derivative $E^\prime(u^\ast)$.
    The following two sections show that these abstract conditions are checkable in practice.
    It is well-known that penalty methods can also be applied without modification in the low-regularity setting \cite{hintermuller2006feasible}.
    However, to the best of our knowledge, the existing theory for discretized penalty methods requires $u^* \in H^{1+s}(\Omega)$ and $s \in (1/2,1]$ when applied to the obstacle and Signorini problems.
    See \cite{scholz1984numerical,gustafsson2017finite} for the obstacle problem and \cite[Section 6]{Chouly2023} for the Signorini problem.
    Notably, the PG results hold for any $s \in (0,1]$.
\end{itemize}

\begin{remark}[Higher-order convergence rates] The current estimates deliver at most first-order convergence rates, in which case it is required that $u^\ast \in H^2(\Omega)$ and $E'(u^*)\in L^2(\Omega)$. However, the PG method is not limited to low-degree polynomial subspaces $V_h \times W_h$ and, notably, high-order rates have been numerically observed in several studies with high-degree subspaces \cite{keith2023proximal,fu2024locally,papadopoulos2024hierarchical}.
Thus, a crucial question remains unanswered: If the exact solution is sufficiently smooth, then what are the most general conditions that guarantee high-order convergence rates?
Indeed, a major difficulty in extending the current framework to the examples below lies in constructing high-order positivity-preserving approximations, which are encoded in the map $\mathcal{E}_h$. This is known to be a challenging task, impeded by some impossibility results \cite{nochetto2002positivity}.
\end{remark}

\section{Application I: The obstacle problem} \label{sec:error_rates}
In this section, we apply the general framework developed in \Cref{sec:main_results} to derive error estimates for the following unilateral obstacle problem, see also \Cref{example:obstacle}: 
\begin{align}
\min_{v\in K} E(v),   \quad E(v) = \frac{1}{2} \|\nabla v\|^2_{L^2(\Omega)} - (f,v)_\Omega
,
\label{eq:energy_min_obstacle}
\end{align}
where the closed and convex set $K$ is 
\begin{equation}
K = \{v \in H^1_0(\Omega) \mid v \geq \phi \text{ a.e.\ in  }  \Omega \}, 
\end{equation}
and the obstacle $\phi \in H^1(\Omega) \cap  L^{\infty}(\Omega)$ is fixed. For simplicity, we  assume that $ \rami{ \phi \vert_{\partial \Omega} = - \delta \leq  0}$ for some \rami{non--negative} constant $\delta$.  It is well-known that there exists a unique solution to \cref{eq:energy_min_obstacle}  \cite[Theorem 6.9-1]{ciarlet2013linear}, here denoted by $u^\ast$.  

For completeness, we write the PG  method for the obstacle problem in \Cref{alg:alg_obstacle}.
In this case, we have used
\begin{equation}
    \mathcal{R}(u) = (u-\phi) \ln(u - \phi) - (u - \phi), \label{eq:R_obstacle}
\end{equation}
which implies that
\begin{equation}
    \nabla \mathcal{R^*} (\psi) = \exp(\psi) + \phi.  \label{eq:nablaR*_obstacle}
\end{equation}
\Cref{assum:cont} holds for this choice of Legendre function \cite[Theorem~4.1]{keith2023proximal}; see also \cite[Proposition~A.8]{keith2023proximal}.
Alternative choices for $\mathcal{R}$ are also admissible, without affecting the proceeding error analysis.
\begin{algorithm}[htb]
\caption{The Proximal Galerkin Method for the Obstacle Problem}
\begin{algorithmic}[1]\label{alg:alg_obstacle}
    \State \textbf{input:} Initial latent solution guess $\psi_h^0  \in W_h$, a sequence of positive proximity parameters $\{\alpha_k\}$.
        \State Initialize \(k = 1\). 
    \State \textbf{repeat}
    \State \quad Find $u_h^{k} \in  V_{h}$
and $\psi_h^{k} \in W_h$ such that 
   \begin{subequations} \label{eq:discrete_lvpp_obstacle}
\begin{alignat}{2}  
\alpha_k \, (\nabla u^{k}_h, \nabla v_h )  + (v_h,\psi^{k}_h - \psi_h^{k-1}) & = \alpha_k (f, v_h)  && ~\fa v_h \in V_h, \label{eq:lvpp_obstacle_0}\\ 
(u^{k}_h, w_h)  - (\exp(\psi^{k}_h) + \phi , w_h) & = 0 && ~\fa  w_h \in W_h.  \label{eq:lvpp_obstacle_1}
\end{alignat}
\end{subequations} 
\State \quad Assign  \(k \gets k + 1\).
    \State \textbf{until} a convergence test is satisfied.
\end{algorithmic}
\end{algorithm}

In this section, where $\Omega_d = \Omega$ and $B=\mathrm{id}$, we use the following notation to remain consistent with \cite{keith2023proximal}, where the PG method was first introduced:
\begin{equation}
\label{eq:BoundPreservingSolution}
\tilde u_h^k 
:=
o_h^k = \exp(\psi_h^{k}) + \phi. 
\end{equation}
We also consider two different choices for the finite element subspaces $V_h \times W_h$, both also introduced in \cite{keith2023proximal}. 
\medskip

\noindent\textbf{Main goal}: We derive error estimates in \Cref{cor:Obstacle} for the PG method applied to the obstacle problem (\Cref{alg:alg_obstacle}) for the two choices of $V_h \times W_h$ given in \cref{eq:bubble_pair,eq:continuous_Lagrange}, below.
To this end, we utilize \Cref{cor:err_rate_general} and verify that \Cref{assumption:maptointerior,assumption:err_rates} hold for each pair of spaces, respectively.
We follow with preliminary results for each case. 
\medskip

\textbf{Case I.} $(\mathbb{P}_1\text{\normalfont-bubble}, \mathbb{P}_{0}\text{\normalfont-broken})$   Define the space \begin{equation} 
    \mathbb{B}(T) = \operatorname{span}\{b_T\},
        \end{equation}
where the bubble function $b_T\colon T \to \mathbb{R}$ is the product of the linear nodal basis functions of the element $T$.
The pair $(V_h,W_h)$ is then defined as
\begin{equation} \label{eq:bubble_pair}
\begin{aligned}
    V_h &  = \{ v \in L^\infty(\Omega) \mid  v \vert_T \in \mathbb{P}_1(T)\oplus \mathbb{B}(T) ~\fa T \in \mathcal{T}_h \} \cap H^1_0(\Omega), \\ 
      W_h &= \mathbb{P}_{0}(\mathcal{T}_h).  
    \end{aligned}
    \end{equation}
\smallskip

\textbf{Case II.} $(\mathbb{P}_1,\mathbb{P}_{1})$, i.e., continuous Lagrange elements.
Define
\begin{equation}
 V_h = W_h =  \mathbb{P}_1(\mathcal{T}_h) \cap H^1_0(\Omega)
   .
 \label{eq:continuous_Lagrange}
\end{equation}
Refer to \Cref{rem:Dirichlet} for enforcing Dirichlet boundary conditions with these elements.  

\begin{remark}[Dirichlet boundary conditions]
\label{rem:Dirichlet}
Both of the $V_h$ subspaces in~\cref{eq:bubble_pair,eq:continuous_Lagrange} imply fixing the degrees of freedom of $u_h^k$ on the Dirichlet boundary.
This is a textbook procedure that is simplified in our setting because of the homogeneous boundary conditions specified in~\cref{eq:energy_min_obstacle}.
Likewise, the continuous Lagrange elements defining $W_h$ 
 in~\cref{eq:continuous_Lagrange} also require fixing the degrees of freedom of the latent variable $\psi_h^k$ on the Dirichlet boundary.
In this case, we construct the lift
$$ \psi_{0,h} : =  \sum_{z \in \mathcal{N}_h \cap \partial\Omega } \nabla \mathcal{R} (0) (z)  \varphi_z , $$
where $\mathcal{N}_h \cap \partial\Omega$ are the boundary dofs and $\{\varphi_z\}_{z \in \mathcal{N}_h}$ are the global shape functions corresponding to the (nodal) dofs $\mathcal{N}_h$.
This construction ensures that the bound-preserving discrete solution $\tilde{u}_h^k$~\cref{eq:BoundPreservingSolution} satisfies the same homogeneous Dirichlet boundary conditions as the solution $u^\ast$ it's meant to approximate.
Indeed, observe that
\[
    \tilde{u}_h^k|_{\partial\Omega} = \nabla \mathcal{R}^*(\psi_h^k|_{\partial\Omega} + \psi_{0,h}|_{\partial\Omega})|_{\partial\Omega} = \nabla \mathcal{R}^*(\nabla \mathcal{R} (0) )|_{\partial\Omega} =  0
    ,
\]
with $\nabla \mathcal{R}^*(\psi) = \exp(\psi) + \phi$ and $\nabla \mathcal{R}(u) = \ln(u-\phi)$ for the particular choice of Legendre function in \Cref{alg:alg_obstacle}.
In turn, when using the elements in Case II, we understand \cref{eq:lvpp_obstacle_1} as 
\begin{align}
(u_h^k, w_h) - (\nabla \mathcal{R}^*(\psi_h^k + \psi_{0,h}), w_h)  = 0 ~\fa w_h \in W_h. 
\end{align}
\end{remark}

\subsection{Preliminaries for the $(\mathbb{P}_1\text{\normalfont-bubble}, \mathbb{P}_{0}\text{\normalfont-broken})$-element pair} \label{subsec:obstacle_prelim1}
We begin with the Fortin operator for the subspaces in \cref{eq:bubble_pair}, and establish the operator's approximation properties required in \Cref{assumption:err_rates}. 
\begin{lemma}[Fortin operator]\label{lemma:fortin_bubble}
The subspaces given in \cref{eq:bubble_pair} satisfies the inf-sup condition \cref{eq:inf_sup} and there exists a stable Fortin operator satisfying \cref{eq:fortin_map} and 
\begin{equation} \label{eq:stability_fortin}
|\Pi_h w |_{H^{m}(\Omega)} \lesssim |w |_{H^{m}(\Omega)} \quad ~\fa w \in H^m(\Omega) ,  \; m  \in \{0,1\}. 
\end{equation}
In addition, for every $ 0 \leq s\leq 1$ and for all $ w \in H^{1+s}(\Omega) \cap H^1_0(\Omega), $ it holds that
\begin{alignat}{2}\label{eq:fortin_approximation}
\| w - \Pi_h w \|_{L^2(\Omega)} + h \|\nabla (w - \Pi_h w)\|_{L^2(\Omega)} & \lesssim h^{1+s} | w |_{H^{1+s}(\Omega)}.  \end{alignat}
\end{lemma}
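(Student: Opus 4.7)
The plan is to build $\Pi_h$ by the classical Fortin correction used for MINI-type element pairs. First I would fix a Scott--Zhang quasi-interpolant $I_h \colon H^1_0(\Omega) \to \mathbb{P}_1(\mathcal{T}_h) \cap H^1_0(\Omega)$, which is known to be simultaneously $L^2$- and $H^1$-stable and to satisfy the local approximation estimates $\|v - I_h v\|_{L^2(T)} + h_T |v - I_h v|_{H^1(T)} \lesssim h_T^{1+s} |v|_{H^{1+s}(\omega_T)}$ at the endpoints $s \in \{0,1\}$, where $\omega_T$ denotes the usual element patch. Then I would define
\begin{equation*}
    \Pi_h v \;=\; I_h v \;+\; \sum_{T \in \mathcal{T}_h} c_T\, b_T,
    \qquad
    c_T \;=\; \frac{1}{\int_T b_T \dd x}\int_T (v - I_h v) \dd x.
\end{equation*}
Because $b_T$ vanishes on $\partial T$, the corrected function still lies in $V_h$, and the choice of $c_T$ enforces $\int_T (v - \Pi_h v) \dd x = 0$ for every $T \in \mathcal{T}_h$. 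Since $W_h = \mathbb{P}_0(\mathcal{T}_h)$ and $b(v,w) = (v,w)$, this is exactly the Fortin identity \cref{eq:fortin_map}.

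\textbf{Stability and approximation.} Next I would exploit the standard scalings (from shape regularity) $\int_T b_T \dd x \simeq h_T^n$, $\|b_T\|_{L^2(T)} \simeq h_T^{n/2}$, and $|b_T|_{H^1(T)} \simeq h_T^{n/2-1}$. The Cauchy--Schwarz inequality then gives $|c_T| \lesssim h_T^{-n/2}\|v - I_h v\|_{L^2(T)}$, so that elementwise
\begin{equation*}
\|c_T b_T\|_{L^2(T)} \lesssim \|v - I_h v\|_{L^2(T)}
\quad\text{and}\quad
|c_T b_T|_{H^1(T)} \lesssim h_T^{-1}\|v - I_h v\|_{L^2(T)}.
\end{equation*}
Squaring, summing over $T$, and using the $L^2$- and $H^1$-stability of $I_h$ together with bounded overlap of the patches $\omega_T$ then delivers the stability \cref{eq:stability_fortin} for $m \in \{0,1\}$. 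The approximation estimate \cref{eq:fortin_approximation} at the endpoints $s \in \{0,1\}$ follows by combining the Scott--Zhang bound $\|v - I_h v\|_{L^2(T)} + h_T|v - I_h v|_{H^1(T)} \lesssim h_T^{1+s}|v|_{H^{1+s}(\omega_T)}$ with the bubble estimate above, and for intermediate $s \in (0,1)$ I would invoke real interpolation applied to the linear operator $\operatorname{id} - \Pi_h$ between Sobolev--Slobodeckij scales. Finally, with $\Pi_h$ in hand, Fortin's lemma upgrades the continuous inf-sup condition (which holds trivially with $\beta = 1$) to the discrete one \cref{eq:inf_sup} with $\beta_h \geq \beta/\|\Pi_h\|$, uniformly in $h$.

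\textbf{Main obstacle.} Most of the argument is routine once the Scott--Zhang foundation is set. The step I expect to require the most care is the $m=0$ stability in \cref{eq:stability_fortin}: unlike the weaker version \cref{eq:fortin_stability_assump_L2} of \Cref{assumption:err_rates}, the lemma asks for $\|\Pi_h w\|_{L^2} \lesssim \|w\|_{L^2}$ without the $h|\nabla w|_{L^2}$ surplus, so the bubble contribution must be controlled by $\|v - I_h v\|_{L^2(T)} \lesssim \|v\|_{L^2(\omega_T)}$ rather than by $h_T |v|_{H^1(\omega_T)}$. This forces the use of an $L^2$-stable variant of Scott--Zhang (the standard one, with averaging over $(n-1)$-faces, suffices) and a small bit of bookkeeping across overlapping patches; the remaining fractional-order and inf-sup steps are immediate consequences of interpolation theory and Fortin's lemma.
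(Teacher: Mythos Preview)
Your construction and argument are essentially identical to the paper's: both build $\Pi_h$ as a Scott--Zhang--type quasi-interpolant plus an elementwise bubble correction that enforces $\int_T(v-\Pi_h v)\dd x=0$, then derive stability from the bubble scalings and approximation from the quasi-interpolant's properties; the paper likewise defers the fractional-$s$ estimate to the known approximation bounds of the underlying quasi-interpolant.

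One slip to fix: your parenthetical assertion that ``the standard one, with averaging over $(n-1)$-faces, suffices'' for the $m=0$ stability is not correct. The face-averaged Scott--Zhang operator reads nodal values via integrals over $(n-1)$-faces and therefore requires traces; it is not even well defined on $L^2(\Omega)$, so it cannot furnish the pure $L^2$-bound $\|I_h v\|_{L^2}\lesssim\|v\|_{L^2}$ that you (rightly) say is needed. The paper avoids precisely this pitfall by using the $L^1$-stable quasi-interpolant of Ern--Guermond, which is built from \emph{volume} averages and maps $L^1(\Omega)\to\mathbb{P}_1(\mathcal{T}_h)\cap H^1_0(\Omega)$; a Cl\'ement-type operator would work equally well. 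With that replacement your argument goes through unchanged.
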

\begin{proof}
The proof is based on the arguments in \cite[Appendix B]{keith2023proximal}. The operator $\Pi_h:L^2(\Omega) \rightarrow V_h$ is constructed as follows: 
\begin{align}
\Pi_h  = \tilde{\mathcal{I}}_h  + \tilde \Pi_h (I - \tilde{\mathcal{I}}_h ), \label{eq:structure_fortin}
\end{align}
where $\tilde{\mathcal{I}}_h: L^1(\Omega) \rightarrow \mathbb{P}_1(\mathcal{T}_h) \cap H^1_0(\Omega)$ is the quasi-interpolant introduced in \cite[Section 6]{ern2017finite}, $I$ is the identity operator, and $\tilde \Pi_h: L^2(\Omega) \rightarrow V_h$ is defined element-wise to satisfy $ (\tilde \Pi_h v )\vert_T := b_T v_T \in \mathbb{B}(T)$, where  $v_T \in \mathbb{P}_{0}(T) = \mathbb{R}$ solves 
\begin{equation}
(b_T v_T , \varphi)_T = (v, \varphi)_T ~\fa \varphi \in \mathbb{P}_{0}(T). 
\end{equation}
It is easy to see that $v_T = (v,1)_T/(b_T,1)_T$ is the unique solution to the above.  
Owing to mesh regularity and to the properties of the bubble function \cite[Lemma 4.1]{verfurth1994posteriori}, we have 
\begin{align}
\|v_T\|^2_{L^2(T)} \lesssim (b_T v_T,v_T)_T = (v, v_T)_T \leq \|v\|_{L^2(T)} \|v_T\|_{L^2(T)}.
\end{align}
Therefore, we obtain 
\begin{equation}
\|\tilde \Pi_h v \|_{L^2(T)}= \|b_T v_T\|_{L^2(T)} \leq \|v_T\|_{L^2(T)} \lesssim  \|v\|_{L^2(T)}. \label{eq:l2_stab_fortin_0} 
\end{equation}
Using \cref{eq:l2_stab_fortin_0} in \cref{eq:structure_fortin} along with the triangle inequality and the stability of $\tilde{\mathcal I}_h$, we obtain the stability of $\Pi_h $ in $L^2(\Omega)$. To show stability in the $H^1$-semi norm, we apply triangle inequality, a local inverse estimate, \cref{eq:l2_stab_fortin_0}   and the stability and approximation properties of $\tilde{\mathcal{I}}_h$ \cite[Lemma 6.3 and Theorem 6.4]{ern2017finite}:
\begin{multline} \label{eq:fortin_stab_H1_0}
|\Pi_h w|_{H^1(T)}  \lesssim |\tilde{\mathcal I}_h w|_{H^1(T)} + h_T^{-1}\| \tilde \Pi_h ( I -\tilde{\mathcal I}_h ) w \|_{L^2(T)}  \\  \lesssim |\tilde{\mathcal I}_h w|_{H^1(T)} + h_T^{-1}\| w  -\tilde{\mathcal I}_h  w \|_{L^2(T)} 
\lesssim |w|_{H^1(\Delta_T)}, 
\end{multline}
where $\Delta_T$ is a macro-element. Summing over mesh elements and using mesh regularity shows stability in the $H^1(\Omega)$-seminorm. We conclude that \cref{eq:stability_fortin} holds. This stability estimate and the observation that  
\[ 
(\Pi_h v, w_h) = (\tilde{\mathcal I}_h v , w_h) + (\tilde \Pi_h (v - \tilde{\mathcal I}_h v), w_h) =   (v,w_h) ~\fa w_h \in W_h
\]
shows \cref{eq:inf_sup}. The stated error estimate \cref{eq:fortin_approximation} is proven by applying the triangle inequality, the stability of $\Pi_h$ \cref{eq:stability_fortin}, and the approximation properties of $\mathcal{\tilde{I}}_h$ \cite{ern2017finite}.  We omit the details.
 \end{proof}

Note that the discrete iterates $u_h^k$ have bound-preserving local averages. Indeed, testing \cref{eq:lvpp_obstacle_1} by the indicator function of one element (an admissible test function because $W_h = \mathbb{P}_{0}(\mathcal{T}_h)$) and using that $\nabla \mathcal{R}^*(\psi) > \phi$ shows that $u_h^k \in K_h$ for all $k$, with
\begin{equation} \label{eq:discrete_constraint_set}
    K_h = \left\{ v_h \in V_h \mid  \int_{T} (v_h - \phi ) \dd x \geq 0 \; ~\fa  T \in \mathcal{T}_h  \right\}. 
\end{equation}

To satisfy \Cref{assumption:maptointerior}, we first require a map defined over $V$ and mapping the set $K_h$ to the set $K$. We will later shift this map to construct the reconstruction operator $\mathcal{E}_h$, see \cref{eq:reconstruction_map_obstacle}. The natural choice is the Cl\'ement interpolant \cite{clement1975approximation,bartels2015numerical}. Here, we utilize a specialized variant of this interpolant that employs weighted local averages over vertex patches to maintain second-order accuracy for smooth functions. This specific interpolant was first introduced in \cite{Fuhrer+2024+363+378}, but we slightly modify its definition to incorporate non-homogeneous boundary data.

Denote the centroid of an element $T \in \mathcal{T}_h$ by $s_T$; i.e., $s_T = (n+1)^{-1} \sum_{v\in \mathcal V_T} v, $ where $\mathcal V_T$ is the set of $n+1$ vertices of the element $T$. 
Since every node $z \in \mathcal{N}_h \backslash \partial \Omega$ belongs to the convex hull of the set $\{ s_T \mid T \subset \omega_z\}$, we can write every $z \in \mathcal{N}_h \backslash \partial \Omega$ as a convex combination of local centroids:
\begin{equation}
z = \sum_{T \subset \omega_z} \alpha_{z,T} s_T, \quad  \sum_{T \subset \omega_z} \alpha_{z,T} = 1,  \quad \alpha_{z,T} \geq 0 .
\end{equation}
Note that the choice of $\{\alpha_{z,T}\}_{T \subset \omega_z}$ is not unique if there are more than $n+1$ elements in $\omega_z$.
We now define $\mathcal{C}_h: H^1(\Omega) \rightarrow \mathbb{P}_{1}(\mathcal{T}_h) \cap H^1(\Omega)$ as follows: 
\begin{equation} \label{eq:clement_case_1}
\mathcal{C}_h v = \sum_{z \in \mathcal{N}_h } v_z  \varphi_z,  \quad v_z = \sum_{ T \subset \omega_z } \frac{\alpha_{z,T}}{|T|}\int_{T} v \dd x \, \text{ if } z \in \mathcal{N}_h \backslash \partial \Omega. 
\end{equation}
For $z \in \partial \Omega$, we set $v_z = (\mathcal{SZ}_h v )(z)$ where $\mathcal{SZ}_h: H^1(\Omega) \rightarrow \mathbb{P}_1(\mathcal{T}_h)$ is the canonical Scott--Zhang interpolant defined in \cite{scott1990finite}.  
Note that with this definition, we recover that $(\mathcal{C}_h v - v) \vert_{\partial \Omega} = 0 $ for any $v$ with piecewise-polynomial boundary data. 
\begin{lemma}[Approximation properties of the modified Cl\'ement interpolant]\label{lemma:Clement} For any $T \in \mathcal{T}_h$ and for $ 0 \leq s \leq 1$, it holds that
\begin{alignat}{2}\label{eq:clement_error_estimate}
\|w- \mathcal{C}_h w \|_{L^2(T)} + h_T \|\nabla(w - \mathcal{C}_h w)\|_{L^2(T)}  &\lesssim h_T^{1+s} | w|_{H^{1+s}(\Delta_T)}  && ~\fa w \in H^{1+s}(\Delta_T), 
\end{alignat}
where $\Delta_T$ is the macro-element given by $\Delta_T = \cup_{z \in \mathcal{V}_T} \omega_z$.  
\end{lemma}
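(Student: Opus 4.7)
The plan is to follow the standard Clément-type interpolation argument, where the crucial ingredient specific to this modified interpolant is that $\mathcal{C}_h$ preserves \emph{linear} polynomials (not just constants, as in the original Clément construction). The quadratic local approximation rate in $h_T$ then follows from a Bramble--Hilbert argument, and the fractional rate $h_T^{1+s}$ follows by interpolation.

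First, I would verify the polynomial preservation property: $\mathcal{C}_h p = p$ for every $p \in \mathbb{P}_1(\R^n)$. Indeed, for a linear $p(x) = a + b\cdot x$ and any element $T$, one has $|T|^{-1}\int_T p \dd x = p(s_T)$. Consequently, for an interior node $z \in \mathcal{N}_h \backslash \partial \Omega$,
\begin{equation*}
v_z
=
\sum_{T \subset \omega_z} \alpha_{z,T} \, p(s_T)
=
a + b \cdot \sum_{T \subset \omega_z} \alpha_{z,T} s_T
=
a + b \cdot z
=
p(z),
\end{equation*}
using the barycentric identity defining $\{\alpha_{z,T}\}$. For boundary nodes the Scott--Zhang interpolant also preserves linears, so $v_z = p(z)$. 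Since $\sum_z p(z) \varphi_z = p$ on simplicial meshes, we conclude $\mathcal{C}_h p = p$.

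Second, I would establish local stability in $L^2$ and $H^1$. For each $z\in \mathcal{N}_h$, the nodal value $v_z$ is a convex combination of element means (interior case) or a Scott--Zhang functional (boundary case), so $|v_z| \lesssim |\omega_z|^{-1/2}\|v\|_{L^2(\omega_z)}$. Together with the standard bound $\|\varphi_z\|_{L^2(T)} \lesssim |T|^{1/2}$ and a local inverse estimate $\|\nabla \varphi_z\|_{L^2(T)} \lesssim h_T^{-1}|T|^{1/2}$, summation over the $O(1)$ nodes touching $T$ yields
\begin{equation*}
    \|\mathcal{C}_h w\|_{L^2(T)} \lesssim \|w\|_{L^2(\Delta_T)},
    \qquad
    \|\nabla \mathcal{C}_h w\|_{L^2(T)} \lesssim h_T^{-1}\|w\|_{L^2(\Delta_T)}.
\end{equation*}
By the polynomial preservation property from the previous step and a standard triangle-inequality trick, these bounds upgrade: for any $p \in \mathbb{P}_1(\Delta_T)$,
\begin{equation*}
\|w - \mathcal{C}_h w\|_{L^2(T)}
\leq
\|w - p\|_{L^2(T)} + \|\mathcal{C}_h(w - p)\|_{L^2(T)}
\lesssim
\|w - p\|_{L^2(\Delta_T)},
\end{equation*}
and similarly $\|\nabla(w - \mathcal{C}_h w)\|_{L^2(T)} \lesssim h_T^{-1}\|w - p\|_{L^2(\Delta_T)} + \|\nabla(w - p)\|_{L^2(\Delta_T)}$.

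Third, I would invoke the Bramble--Hilbert lemma on the macro-patch $\Delta_T$ (whose diameter is comparable to $h_T$ by shape-regularity): there exists $p \in \mathbb{P}_1(\Delta_T)$ with
\begin{equation*}
\|w - p\|_{L^2(\Delta_T)} + h_T \|\nabla(w - p)\|_{L^2(\Delta_T)} \lesssim h_T^{2} |w|_{H^{2}(\Delta_T)}.
\end{equation*}
Substituting this into the stability bounds from the previous step yields the desired estimate for the integer endpoint $s = 1$. The endpoint $s = 0$ is immediate from the $L^2$ and $H^1$ stability combined with the triangle inequality. For intermediate $s \in (0,1)$, I would conclude by space interpolation between $H^1(\Delta_T)$ and $H^2(\Delta_T)$, using the mapping property of $\mathrm{id} - \mathcal{C}_h$ on the endpoint spaces (see, e.g., \cite[Chapter~14]{brenner2008mathematical}), and summing over elements using shape regularity and the finite overlap of macro-patches.

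The main obstacle is book-keeping at boundary nodes: the construction uses weighted centroid averaging for interior nodes but Scott--Zhang for boundary nodes, and one must verify that the combined operator still inherits (i) joint linear preservation on macro-patches $\Delta_T$ that touch $\partial\Omega$, and (ii) local $L^2/H^1$ stability with constants independent of $h$. Fortunately both properties are well-established for Scott--Zhang individually, so the argument reduces to a careful patch-by-patch accounting.
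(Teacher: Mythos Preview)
Your overall strategy coincides with the paper's: establish that $\mathcal{C}_h$ reproduces linear polynomials, derive a local stability bound for $\mathcal{C}_h(p-w)$, and conclude via Bramble--Hilbert. One difference is that the paper invokes a fractional Bramble--Hilbert lemma directly on $\Delta_T$ to obtain the $h_T^{1+s}$ rate in a single step, whereas you prove the integer endpoints and then appeal to operator interpolation between $H^1(\Delta_T)$ and $H^2(\Delta_T)$; both routes are standard, though yours requires a little extra care to ensure the interpolation constants are uniform over the shape-regular family of macro-patches.

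There is, however, a genuine gap in your stability step. You assert that for \emph{every} node $|v_z|\lesssim |\omega_z|^{-1/2}\|v\|_{L^2(\omega_z)}$, and in particular that this holds for the Scott--Zhang values at boundary nodes. This is not true: the Scott--Zhang nodal functional is a face integral, and a trace inequality forces an additional gradient term. Consequently your intermediate bound $\|\mathcal{C}_h w\|_{L^2(T)}\lesssim \|w\|_{L^2(\Delta_T)}$ fails near $\partial\Omega$. The paper confronts exactly this point: for boundary nodes it uses an inverse inequality on the face followed by the local Scott--Zhang stability estimate to obtain
\[
\|\mathcal{C}_h(p-w)\|_{L^2(T)}\;\lesssim\;\|p-w\|_{L^2(\Delta_T)}+h_T\|\nabla(p-w)\|_{L^2(\Delta_T)},
\]
and the extra $h_T\|\nabla(p-w)\|$ term is then absorbed by Bramble--Hilbert. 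You flag boundary nodes as ``book-keeping'' at the end, but the stated $L^2$-only bound in the body is what actually needs repair; once you insert the gradient term, the rest of your argument goes through unchanged.
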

\begin{proof}
The following is a modified proof of \cite[Theorem 11]{Fuhrer+2024+363+378} for the case of inhomogeneous boundary data and $H^{1+s}(\Omega)$ functions. The key observation, as we show next, is that this interpolant preserves linear functions locally \cite{Fuhrer+2024+363+378}; i.e., $\mathcal{C}_h p = p$ for any $ p \in \mathbb{P}_1(\omega_z)$ and $z \in \mathcal{N}_h$. For an interior node $z \in \mathcal{N}_h \backslash \partial \Omega$, it is clear that $b_z = b$ for a constant $b$ since $\sum_{T \subset \omega_z} \alpha_{z,T} = 1$ and for a linear polynomial $p$, it follows that:  
\begin{align}
\mathcal{C}_h p(z) = \sum_{T \subset \omega_z } \alpha_{z,T} p(s_T) = p \left( \sum_{T \subset \omega_z } \alpha_{z,T} s_T \right) = p (z), 
\end{align}
where we used that $\int_T p \dd x = |T| p(s_T)$ to arrive at the first equality.
For a boundary node $z \in \mathcal{N}_h\cap{\partial\Omega}$, we conclude that $\mathcal{C}_hp (z) = (\mathcal{SZ}_h p) (z) = p(z)$. Therefore, $\mathcal{C}_h p =  p$ for all $p \in \mathbb{P}_1(\Delta_T)$ with $T \in \mathcal{T}_h$. This implies that 
\begin{align}\label{eq:Clement_err_0}
\|w - \mathcal{C}_h w \|_{L^2(T)} \leq  \|w - p \|_{L^2(T)} + \|\mathcal{C}_h (p - w)\|_{L^2(T)}
\end{align}
for any $T \in \mathcal{T}_h$ and $p \in \mathbb P_1(\Delta_T)$.
For the second term, we proceed by bounding $C_h(p-w)(z)$ for each $z \in \mathcal{V}_T. $ For an interior node $z \in \mathcal{N}_h \backslash \partial \Omega$, we apply Cauchy--Schwarz inequality and the fact that $ 0 \leq \alpha_{z,T} \leq 1$:
\begin{align}
|\mathcal{C}_h (p - w) (z)| \leq \sum_{T \subset \omega_z } |\alpha_{z,T}| |T|^{-1} \|1\|_{L^2(T)} \|p-w\|_{L^2(T)} \lesssim |T|^{-1/2} \|p-w\|_{L^2(\omega_z)} . 
\end{align}
In the above, we also used that $\mathrm{card}(\omega_z)$ is uniformly bounded w.r.t.\ $h$ for all $z \in \mathcal{N}_h$, which follows from the shape-regularity of the mesh; see, e.g., \cite[Proposition 11.6]{ern2021finite1}. 
If $z \in \partial \Omega$, then $z \in \partial T \cap \omega_z$ for some $T \subset \omega_z$. Using the definition of $\mathcal{C}_h$ on boundary nodes, we apply local inverse and trace inequalities to bound 
\begin{align}
|\mathcal{C}_h (p-w)(z)| = |\mathcal{SZ}_h(p-w)(z)|
& \lesssim h_T^{(1-n)/2} \|\mathcal{SZ}_h(w-p)\|_{L^2(\partial T)}
\\ & \lesssim h_T^{-n/2} \|\mathcal{SZ}_h(w-p)\|_{L^2(T)}  \nonumber 
\\ & \lesssim h_T^{-n/2}  ( \| p-w\|_{L^2(\Delta_T)}  +  h_T \|\nabla (p-w)\|_{L^2(\Delta_T)}) .  \nonumber
\end{align}
In the last line above, we used the local stability property of $\mathcal{SZ}_h $ \cite[Theorem 3.1]{scott1990finite}. 
Using that $C_h (p - w) \vert_{T} = \sum_{z \in \mathcal{V}_T}  C_h(p-w)(z) \phi_z$ and that $\|\phi_z\|_{L^2(T)} \lesssim h_T^{n/2}$, we arrive at 
\begin{align}\label{eq:Clement_err_1}
\|\mathcal{C}_h (p-w)\|_{L^2(T)}  \lesssim \|p -w\|_{L^2(\Delta_T)} +  h_T \|\nabla (p-w)\|_{L^2(\Delta_T)}  . 
\end{align}
Combining \cref{eq:Clement_err_0} with \cref{eq:Clement_err_1} and using the Bramble--Hilbert Lemma (see, e.g., \cite[Lemma 5.6]{ern2017finite}) yields the required bound on the first term of \cref{eq:clement_error_estimate}. To obtain the required bound on the second term, we apply triangle and inverse estimates: 
\begin{align} \nonumber
\|\nabla (w-\mathcal{C}_h w) \|_{L^2(T)} \lesssim \|\nabla (w- \mathcal{SZ}_h w)\|_{L^2(T)} + h_T^{-1}( \| \mathcal{SZ}_h w - w \|_{L^2(T)} + \| w - \mathcal{C}_h w\|_{L^2(T)}) . 
\end{align}
The proof is completed by using the approximation properties of $\mathcal{SZ}_h $ \cite[Theorem 4.1]{scott1990finite} and the proven bound on $\| w - \mathcal{C}_h w\|_{L^2(T)}$. 
\end{proof}

\subsection{Preliminaries for the $(\mathbb{P}_1,\mathbb{P}_{1})$-element pair} \label{subsec:obstacle_prelim_2}
For this case, the Fortin operator is simply the $L^2(\Omega)$-projection; cf.\ \cite[Remark 5.2]{keith2023proximal}. Thus, the main task is to construct the reconstruction operator associated with \Cref{assumption:maptointerior}, which again depends on a modified Cl\'ement quasi-interpolant. Using the definition of $K_h$ \cref{eq:general_set_Kh} and testing with $\varphi_z$, we observe that \begin{align}
K_h = \left\{v_h \in V_h \mid \int_{\omega_z} (v_h - \phi ) \varphi_z \dd x \geq 0 ~\fa z \in \mathcal{N}_h \backslash \partial \Omega\right\}, 
\end{align}
since the support of $\varphi_z$ is $\omega_z$, $\nabla \mathcal{R}^*(\psi_h^\ell) - \phi > 0$, and $\varphi_z \geq 0$. 
In what follows, we consider the quasi-interpolant proposed in \cite{yan2001posteriori,carstensen2002each}: 
\begin{equation} \label{eq:clemenet_case_2}
{\mathcal{C}}_h v = \sum_{z \in \mathcal{N}_h } v_z  \varphi_z,  \quad  v_z =  \frac{1}{\int_{\omega_z} \varphi_z} \int_{\omega_z} v \varphi_z \dd x  \text{ if } z \in \mathcal{N}_h \backslash \partial \Omega. 
\end{equation}
If $z \in \partial \Omega$, we proceed as before and select $ v_z = (\mathcal{SZ}_h v) (z)$.
For simplicity, we make the following symmetry assumption on the mesh, which is sufficient for optimality of the quasi-interpolant given in \cref{eq:clemenet_case_2}.
Note that this assumption is the same condition necessary for optimality of the classical Cl\'ement quasi-interpolant \cite{Fuhrer+2024+363+378}.
\begin{assumption}[Local mesh symmetry] \label{assump:symmetry} For all $z \in \mathcal{N}_h \backslash \partial \Omega$, assume that 
\begin{equation} \label{eq:symmetry_assumption}
\frac{1}{|\omega_z|}\sum_{T \subset \omega_z} |T| s_T = z, \text{ where } s_T= (n+1)^{-1} \sum_{v \in \mathcal{V}_T}v. 
\end{equation}
\end{assumption}
We now prove that this condition implies optimality of~\cref{eq:clemenet_case_2}.
\begin{lemma}\label{lemma:clement_continuous}
Suppose that the mesh $\mathcal{T}_h$ satisfies \Cref{assump:symmetry}.
For any $T \in \mathcal{T}_h$ and for  $ 0 \leq s \leq 1$, 
\begin{alignat}{2}\label{eq:clement_error_estimate_0}
\|w- \mathcal{C}_h w \|_{L^2(T)} + h_T \|\nabla(w - \mathcal{C}_h w)\|_{L^2(T)}  &\lesssim h_T^{1+s} | w|_{H^{1+s}(\Delta_T)}  && ~\fa w \in H^{1+s}(\Delta_T), 
\end{alignat}
where $\Delta_T$ is the macro-element given by $\Delta_T = \cup_{z \in \mathcal{V}_T} \omega_z$. 
\end{lemma}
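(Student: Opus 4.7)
The proof will closely parallel that of \Cref{lemma:Clement}. The critical step is to verify that the modified Cl\'ement interpolant defined in~\cref{eq:clemenet_case_2} preserves linear polynomials locally, in the sense that $\mathcal{C}_h p = p$ on every $T \in \mathcal{T}_h$ whenever $p \in \mathbb{P}_1(\Delta_T)$. This is precisely where \Cref{assump:symmetry} enters. For boundary nodes, the preservation property is immediate from the Scott--Zhang construction, so the work lies at interior nodes.

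For an interior node $z \in \mathcal{N}_h \setminus \partial \Omega$ and a linear polynomial $p(x) = a \cdot x + b$, we have
\begin{equation*}
v_z = \frac{\int_{\omega_z} p \varphi_z \dd x}{\int_{\omega_z} \varphi_z \dd x} = b + a \cdot \frac{\int_{\omega_z} x \varphi_z \dd x}{\int_{\omega_z} \varphi_z \dd x},
\end{equation*}
so the question reduces to showing $\int_{\omega_z} x \varphi_z \dd x = z \int_{\omega_z} \varphi_z \dd x$. Writing $x = \sum_{v \in \mathcal{V}_T} v \varphi_v$ on each $T$ and using the standard simplex mass-matrix identity $\int_T \varphi_i \varphi_j \dd x = |T|(1+\delta_{ij})/[(n+1)(n+2)]$, a short computation yields
\begin{equation*}
\int_T x \varphi_z \dd x = \frac{|T|}{n+2}\, s_T + \frac{|T|\, z}{(n+1)(n+2)}
\end{equation*}
for each $T \subset \omega_z$. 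Summing over $T \subset \omega_z$ and applying the symmetry assumption $\sum_{T \subset \omega_z} |T| s_T = |\omega_z| z$, together with $\int_{\omega_z} \varphi_z \dd x = |\omega_z|/(n+1)$, one finds $\int_{\omega_z} x \varphi_z \dd x = z |\omega_z|/(n+1) = z \int_{\omega_z} \varphi_z \dd x$, so $v_z = p(z)$ and $\mathcal{C}_h p = p$ as claimed.

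With local polynomial preservation in hand, the remainder of the proof follows the template of \Cref{lemma:Clement}. Namely, for any $p \in \mathbb{P}_1(\Delta_T)$,
\begin{equation*}
\|w - \mathcal{C}_h w\|_{L^2(T)} \leq \|w - p\|_{L^2(T)} + \|\mathcal{C}_h (p - w)\|_{L^2(T)}.
\end{equation*}
At interior nodes, Cauchy--Schwarz combined with $\varphi_z \geq 0$ and $\int_{\omega_z} \varphi_z \dd x \simeq |\omega_z|$ gives $|\mathcal{C}_h (p-w)(z)| \lesssim |T|^{-1/2}\|p-w\|_{L^2(\omega_z)}$, while at boundary nodes the same trace/inverse estimate argument used in \Cref{lemma:Clement} applies verbatim via the Scott--Zhang piece. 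Using $\|\varphi_z\|_{L^2(T)} \lesssim h_T^{n/2}$ and summing over $z \in \mathcal{V}_T$ bounds the second term by $\|p - w\|_{L^2(\Delta_T)} + h_T \|\nabla(p-w)\|_{L^2(\Delta_T)}$. The Bramble--Hilbert lemma then yields the $L^2$ estimate, and the $H^1$ bound follows by inserting $\mathcal{SZ}_h w$, applying the triangle inequality, a local inverse estimate on $\mathcal{C}_h w - \mathcal{SZ}_h w \in \mathbb{P}_1(\mathcal{T}_h)$, and standard Scott--Zhang approximation.

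The only nontrivial obstacle is the polynomial-preservation identity, which hinges entirely on \Cref{assump:symmetry}; once that moment identity is established, the approximation theory proceeds by a direct transcription of the arguments from \Cref{lemma:Clement}.
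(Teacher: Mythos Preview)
Your proposal is correct and follows essentially the same approach as the paper: you establish local preservation of linear polynomials via the mass-matrix identity and \Cref{assump:symmetry}, then transcribe the remaining estimates from \Cref{lemma:Clement}. The only cosmetic difference is that the paper packages the moment computation as $\int_T p\,\varphi_z = \frac{|T|}{(n+1)(n+2)}\,p\bigl(z+(n+1)s_T\bigr)$ rather than computing $\int_T x\,\varphi_z$ directly, but these are equivalent manipulations of the same identity.
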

\begin{proof}
The proof follows similar arguments to the proof of \Cref{lemma:Clement}. We only highlight the key points. 
We first show that under \cref{eq:symmetry_assumption}, $\mathcal{C}_h p (z) = p(z)$ for all $p \in \mathbb{P}_1(\omega_z)$. To see this first note that for linear $p$, we have
\begin{align*}
\int_{T} p(x) \varphi_z(x) \dd x = p \left(  \sum_{z' \in \mathcal{V}_T} \int_T  z' \varphi_{z'}(x) \varphi_z(x)  \dd x \right)  = \frac{|T|}{(n+1)(n+2)}  p (  z +(n+1)s_T ), 
\end{align*}
where we used that $\int_{T} \varphi_{z'} \varphi_z \dd x = (n+1)^{-1}(n+2)^{-1}|T| (1+\delta_{z',z})$ \cite[Exercise 4.1.1]{ciarlet2002finite}. Hence, 
\begin{align}
\frac{1}{\int_{\omega_z} \varphi_z} \int_{\omega_z} p \varphi_z \dd x = \frac{1}{(n+2)|\omega_z|} p \left( \sum_{T \subset \omega_z} |T| (z + (n+1)s_T)  \right) = p(z).
\end{align}
In the last step, we used that $\int_{\omega_z} \varphi_z = |\omega_z|/(n+1)$ and \cref{eq:symmetry_assumption}. Along with the observation that $\overline{b}_z = b$ for any constant $b$, we conclude  that  $\mathcal{C}_h p = p $ for any $p \in \mathbb{P}_1(\omega_z)$.
In turn, it suffices to bound $\|\mathcal{C}_h(p-w)\|_{L^2(T)}$ as done in \cref{eq:Clement_err_0}. If $z \in \mathcal{N}_h \backslash \partial \Omega$, we use the Cauchy--Schwarz inequality and the fact that $\|\varphi_z \|_{L^2(T)} \lesssim h_T^{n/2}$ to deduce
\begin{align}
|\mathcal{C}_h (p - w) (z)| \leq \sum_{T \subset \omega_z } (n+1)  |\omega_z|^{-1} \|\phi_z\|_{L^2(T)} \|p-w\|_{L^2(T)} \lesssim h_T^{-n/2} \|p-w\|_{L^2(\omega_z)} . 
\end{align}
The case $z \in \partial \Omega$ and the remaining steps follow identically to the proof of \Cref{lemma:Clement}. The details are omitted for brevity. 
\end{proof} 

\begin{remark}[Removing \Cref{assump:symmetry}] As suggested by the proof of \Cref{lemma:clement_continuous}, one can not guarantee that $\mathcal C_h p (z) = p(z)$ for all $p \in \mathbb P_1( \omega_z)$ if \cref{eq:symmetry_assumption} does not hold.
Thus, a lack of local symmetry results in the proposed quasi-interpolant lacking optimality.
Of course, one may be inspired by the construction of $\mathcal{C}_h$ in \Cref{subsec:obstacle_prelim1} to generalize the proposed operator~\cref{eq:clemenet_case_2}.
In particular, one can define a convex combination of weights $\{\alpha_{z'}\}_{z' \in \omega_z}$ such that the following reweighted operator,
\[ 
\mathcal{C}_h v(z) = \sum_{z' \in \omega_z} \frac{\alpha_{z'}}{\int_{\omega_{z'}} \varphi_{z'}} \int_{\omega_{z'}}v \varphi_{z'} \dd x , \quad z \in \mathcal N_h \backslash \partial \Omega, 
\] 
preserves linear functions on the super-macro element $\cup_{z' \in \mathcal{N}_h \cap\omega_z} \omega_{z'}$.
This construction requires a more delicate analysis and specific assumptions on $\mathcal{T}_h$ near the boundary $\partial \Omega$ to guarantee non-negativity of $\mathcal{C}_h(v_h -\phi)$ for all $v_h \in K_h \cup K$.
Although we save the technical details for future work, we perform this type of construction on a one-dimensional subdomain for the Signorini problem considered in \Cref{sec:error_rates_sig}; in particular, see~\cref{eq:clement_sig}.
\end{remark}

\subsection{Error estimates for the obstacle problem}\label{sec:obstacle_problem}
The preliminary results in \Cref{subsec:obstacle_prelim1,subsec:obstacle_prelim_2} leave us ready to state and prove optimal a priori error estimates for the PG method applied to the obstacle problem. \rami{We emphasize that the $(\mathbb{P}_1\text{\normalfont-bubble}, \mathbb{P}_{0}\text{\normalfont-broken})$ pair does \textit{not} require quasi-uniform meshes. }
 \begin{corollary}[A priori error estimates for the obstacle problem]
\label{cor:Obstacle}
Assume that $u^*, \phi \in H^{1+s}(\Omega)$, $\lambda^* = E'(u^*) = -\Delta u^* + f \in H^{1-r}(\Omega)$ for fixed $s, r\in (0,1]$ and that $\|\psi_h^0\|_{L^\infty(\Omega)} \lesssim 1$.
Moreover, for the $(\mathbb{P}_1,\mathbb{P}_{1})$-element pair, assume that the mesh is quasi-uniform and satisfies \Cref{assump:symmetry}. 

For both the $(\mathbb{P}_1\text{\normalfont-bubble}, \mathbb{P}_{0}\text{\normalfont-broken})$ and $(\mathbb{P}_1,\mathbb{P}_{1})$ element pairs, the following error estimate holds:
\begin{equation} \label{eq:error_bound_obstacle}
\| u^* -  u_h^\ell\|_{H^1(\Omega)}^2 + \|\lambda^* - \lambda_h^\ell\|^2_{H^{-1}(\Omega)}  \lesssim \frac{C_{\mathrm{stab}}}{\sum_{k=1}^\ell \alpha_k}  + C_{\mathrm{reg}}\, h^{2\cdot \min\{r,s\}},  
\end{equation} 
where the constants $C_{\mathrm{stab}}$ and $C_{\mathrm{ref}}$ are independent of $h,\; \ell$, and the proximity parameters $\alpha_k$. 

In addition, for the $(\mathbb{P}_1\text{\normalfont-bubble}, \mathbb{P}_{0}\text{\normalfont-broken})$ elements, we have the following estimate on the bound-preserving approximation $\tilde u_h^\ell$:
\begin{equation}
\|u^* - \tilde u_h^\ell \|^2_{L^2(\Omega)} \lesssim  \frac{C_{\mathrm{stab}}}{\sum_{k=1}^\ell \alpha_k}  + C_{\mathrm{reg}}\, h^{2\cdot \min\{r,s\}}. \label{eq:error_tilde_u}
\end{equation}
\end{corollary}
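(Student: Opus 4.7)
The plan is to invoke \Cref{cor:err_rate_general} for each element pair. \Cref{assumption:R} for the Shannon entropy follows from \Cref{example:shannon_entropy}, so the task reduces to exhibiting a Fortin operator $\Pi_h$ and an enriching map $\mathcal{E}_h$ satisfying \Cref{assumption:maptointerior,assumption:err_rates}; the bound \cref{eq:error_bound_obstacle} is then immediate, and \cref{eq:error_tilde_u} follows separately from \Cref{lemma:nonpoly_approx}. For the Fortin operator, Case I is covered directly by \Cref{lemma:fortin_bubble}, whose estimates are exactly \cref{eq:stability_fortin_general} and \cref{eq:fortin_rates}. In Case II, $V_h = W_h$ and $b(\cdot,\cdot) = (\cdot,\cdot)$, so the $L^2(\Omega)$-projection onto $\mathbb{P}_1(\mathcal{T}_h) \cap H^1_0(\Omega)$ is a natural Fortin operator; under the assumed quasi-uniformity it is $H^1_0$-stable and achieves the optimal $L^2$ and $H^1$ error bounds required by \cref{eq:map_rates}.

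For the enriching map, in both cases I propose
\begin{equation*}
\mathcal{E}_h v = \mathcal{C}_h v + (\phi - \mathcal{C}_h \phi) + \eta_h,
\end{equation*}
with $\mathcal{C}_h$ taken from \cref{eq:clement_case_1} in Case I and from \cref{eq:clemenet_case_2} in Case II, and with $\eta_h \in H^1_0(\Omega)$ a fixed strictly positive correction of order $h^{1+s}$ in $L^\infty$ and $h^s$ in $H^1$---for instance, a suitably scaled sum of element bubbles. Because $\phi|_{\partial\Omega} = -\delta$ is constant and the Scott--Zhang boundary prescription inside $\mathcal{C}_h$ reproduces constants, $(\phi - \mathcal{C}_h\phi)|_{\partial\Omega} \equiv 0$, so $\epsilon := (\phi - \mathcal{C}_h \phi) + \eta_h \in V$ and the decomposition \cref{eq:EnrichingMapDecomposition} holds. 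The identity $\mathcal{E}_h v - \phi = \mathcal{C}_h(v - \phi) + \eta_h$ then reduces \Cref{assumption:maptointerior} to the nonnegativity of $\mathcal{C}_h(v-\phi)$ for $v \in K \cup K_h$: the interior nodal values of $\mathcal{C}_h(v-\phi)$ are nonnegative convex combinations of exactly the element moments $(v-\phi,\chi_T)$ in Case I, or the patch moments $(v-\phi,\varphi_z)$ in Case II, that \cref{eq:discrete_constraint_set} and the $\varphi_z$-test of \cref{eq:lvpp_obstacle_1} force to be nonnegative; at boundary nodes the Dirichlet mismatch $\delta > 0$ already provides strict positivity. Adding the strictly positive $\eta_h$ then secures $\operatorname{ess\,inf}_\Omega(\mathcal{E}_h v - \phi) > 0$, placing $\mathcal{E}_h v$ in $\operatorname{dom}(\nabla \mathcal{R} \circ B)$.

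The estimates in \cref{eq:map_rates} then reduce to bookkeeping: \cref{eq:map_Ch_rates} is furnished by \cref{eq:clement_error_estimate,eq:clement_error_estimate_0}; \cref{eq:fortin_rates} is \cref{eq:fortin_approximation} in Case I and a standard $L^2$-projection estimate in Case II; and \cref{eq:map_Eh_rates_r} for $\epsilon$ follows from Hilbertian interpolation between the $L^2$ and $H^1$ error bounds on $\phi - \mathcal{C}_h \phi$ together with elementary scaling on $\eta_h$. \Cref{cor:err_rate_general} then delivers \cref{eq:error_bound_obstacle}. The companion estimate \cref{eq:error_tilde_u} follows from \Cref{lemma:nonpoly_approx}: the first right-hand term is dominated by the $H^1$ bound just obtained, while the residual $\|(I-P_{W_h})(u_h^\ell - \tilde u_h^\ell)\|_{L^2(\Omega)}$ is controlled by inserting $\pm u^*$ and invoking $L^2$-stability of $P_{W_h}$ together with the standard $\mathbb{P}_0$ approximation rate applied to $u^* \in H^{1+s}(\Omega)$.

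The chief technical obstacle will be the construction of $\eta_h$: it must be strictly positive on $\Omega$ (so that $\mathcal{E}_h v$ lands in the \emph{open} set $\operatorname{dom}(\nabla \mathcal{R}\circ B)$ rather than merely in the closed feasible set $K$), vanish on $\partial\Omega$ (for $V$-conformity), and nevertheless be small enough in every $H^t$ norm to preserve the optimal rate in \cref{eq:map_Eh_rates_r}. A secondary subtlety is aligning the Scott--Zhang prescription inside $\mathcal{C}_h$ with the Dirichlet data $\phi|_{\partial\Omega} = -\delta$ so that $\phi - \mathcal{C}_h \phi$ indeed has a vanishing trace, which is what keeps $\epsilon \in H^1_0(\Omega)$ throughout the argument above.
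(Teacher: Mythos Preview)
Your overall architecture matches the paper's proof exactly: invoke \Cref{cor:err_rate_general} after verifying \Cref{assumption:maptointerior,assumption:err_rates} via the Fortin operators from \Cref{lemma:fortin_bubble} (Case~I) and the $L^2$-projection (Case~II), together with the enriching map $\mathcal{E}_h v = \mathcal{C}_h v + (\phi - \mathcal{C}_h\phi) + \eta_h$, and then handle~\cref{eq:error_tilde_u} via \Cref{lemma:nonpoly_approx}. The bookkeeping you describe for \cref{eq:map_rates} is also correct.

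The genuine gap is your candidate for $\eta_h$. A sum of element bubbles vanishes on the entire mesh skeleton, not merely on $\partial\Omega$; since $\mathcal{C}_h(v-\phi)$ is continuous piecewise linear and can be identically zero along interior edges (e.g., when $v=\phi$ on a patch so the relevant moments vanish), you would get $\operatorname{ess\,inf}_\Omega(\mathcal{E}_h v - \phi)=0$, and $\mathcal{E}_h v \notin \operatorname{dom}(\nabla\mathcal{R})$. Thus your example violates the very ``strictly positive on $\Omega$'' requirement you correctly identify as the chief obstacle. The paper resolves this by taking $\eta_h = \epsilon \sum_{z \in \mathcal{N}_h\setminus\partial\Omega}\varphi_z$ with $\epsilon = \mathrm{card}(\mathcal{T}_h)^{-1/2}\min_T h_T^{2-n/2}$: the interior hat functions combine with the boundary contribution $\mathcal{C}_h(v-\phi)(z)=\delta$ at $z\in\partial\Omega$ through the partition of unity $\sum_{z\in\mathcal{N}_h}\varphi_z\equiv 1$ to give the uniform lower bound $\mathcal{C}_h(v-\phi)+\eta_h \geq \min(\delta,\epsilon)>0$ everywhere. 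The specific scaling of $\epsilon$ then yields $\|\eta_h\|_{H^t(\Omega)}\lesssim h^{2-t}\leq h^{1+s-t}$ via a fractional inverse inequality and summation over elements, completing~\cref{eq:map_Eh_rates_r}. Replace your bubble suggestion with this hat-function construction and the argument goes through.
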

\begin{proof}
We verify the assumptions of \Cref{cor:err_rate_general}.
For the $(\mathbb{P}_1\text{\normalfont-bubble}, \mathbb{P}_{0}\text{\normalfont-broken})$ elements, \Cref{lemma:fortin_bubble} shows that \cref{eq:fortin_stability_assump_L2}-\cref{eq:fortin_stability_assump_H1}  and \cref{eq:fortin_rates} in \Cref{assumption:err_rates} hold.
For the $(\mathbb{P}_1,\mathbb{P}_{1})$ elements, the Fortin operator is simply the $L^2(\Omega)$ projection onto $V_h$.
It is standard to show that \cref{eq:fortin_stability_assump_L2}-\cref{eq:fortin_stability_assump_H1} and \cref{eq:fortin_rates} hold if the mesh is quasi-uniform, as assumed in this case; see, e.g., \cite[Proposition 22.21]{ern2021finite1} and \cite[Remark 5.2]{keith2023proximal}.

To verify \Cref{assumption:maptointerior}, define the reconstruction operator as follows:
\rami{
\begin{equation} \label{eq:reconstruction_map_obstacle}
 \mathcal{E}_h  w  :=  \mathcal{C}_h (w - \phi) + \phi \end{equation}
}
 where $\mathcal{C}_h$ is the appropriate quasi-interpolant analyzed in \Cref{lemma:Clement,lemma:clement_continuous}.
Observe that $\mathcal{E}_h w \vert_{\partial \Omega} = 0$ for any $w \in H^1_0(\Omega)$, which follows from $\mathcal{C}_h w \vert_{\partial \Omega} = 0$ and $(\phi - \mathcal{C}_h \phi)\vert_{\partial \Omega} = 0$ since $\phi$ is a constant on $\partial \Omega$.  Hence, we obtain that $\mathcal{E}_h w \in H^1_0(\Omega)$ because $\mathcal{C}_h(w-\phi) \in \mathbb{P}_1(\mathcal{T}_h) \cap H^1(\Omega)$ and $\phi \in H^1(\Omega)$.  
By construction of the interpolants, cf.\ \cref{eq:clement_case_1} and \cref{eq:clemenet_case_2},  we have that $\mathcal{C}_h(w-\phi)(z) \geq 0$ for any $z \in \mathcal{N}_h$. 
\rami{In addition,  for $z \in \partial \Omega$, $\mathcal{C}_h(w-\phi) (z) = - \phi \vert_{\partial \Omega} \geq 0$.} \rami{Hence, we conclude that $\mathcal{E}_h w \geq \phi$ and $\mathcal{E}_h w \in K$.}

Observing that $\mathcal{E}_h w = \mathcal{C}_h w + \varepsilon $ where \rami{$\varepsilon = \phi - \mathcal{C}_h \phi$}, the inequality \cref{eq:map_Ch_rates} is verified in \Cref{lemma:Clement} and \Cref{lemma:clement_continuous}. It remains to verify \cref{eq:map_Eh_rates_r}. \rami{This follows directly from \cref{eq:interpolation_fortin}} 
\rami{
\begin{align} \label{eq:obstacle_err_0}
\|\varepsilon \|_{H^t(\Omega)}  & =  
 \|\phi - \mathcal{C}_h \phi\|_{H^t(\Omega)}  \lesssim h^{1+s-t} |\phi|_{H^{1+s-t}(\Omega)}. 
 \end{align}
 }
 Having verified both \Cref{assumption:maptointerior,assumption:err_rates}, we invoke \Cref{cor:err_rate_general} to conclude that \cref{eq:error_bound_obstacle} holds.

To show \cref{eq:error_tilde_u}, we utilize \Cref{lemma:nonpoly_approx} and note that $P_{h} \tilde u_h^\ell = \tilde u_h^\ell$ since $\tilde u_h^\ell \in \mathbb{P}_0(\mathcal{T}_h)$. Therefore, since $B$ is the identity map  and $\Omega_d = \Omega$ in this case, \cref{eq:estimate_tilde_general} reads 
\begin{align}
\label{eq:estimate_tilde_obstacle}
\|u^* - \tilde u_h^\ell \|_{L^2(\Omega)}  & \leq \|u_h^\ell - u^*\|_{L^2(\Omega)} + \|(I- P_{h}) u_h^\ell\|_{L^2(\Omega)} \\ 
\nonumber & \leq   \|u_h^\ell - u^*\|_{L^2(\Omega)} + \|(I- P_{h}) ( u_h^\ell - u^*)\|_{L^2(\Omega)}  + \|(I- P_{h}) u^*\|_{L^2(\Omega)}   \\ 
& \lesssim \|u_h^\ell - u^*\|_{L^2(\Omega)} + h |u^*|_{H^1(\Omega)},  \nonumber
\end{align}
where we used the stability of $P_{h}$ and that $\|u^* - P_{h} u^*\|_{L^2(T)} \lesssim h_K |u^*|_{H^1(T)}$ for any $T \in \mathcal{T}_h$.
The final estimate~\cref{eq:error_tilde_u} is obtained by combining \cref{eq:error_bound_obstacle} and the final inequality in~\cref{eq:estimate_tilde_obstacle}. 
\end{proof}

\section{Application II: The Signorini problem} \label{sec:error_rates_sig}
We consider the following version of the Signorini problem on a two-dimensional domain for simplicity; cf.\ \Cref{example:signorini}.
Find $u^* \in V = H^1_\mathrm{D}(\Omega)^2$ where $H^1_{\mathrm{D}}(\Omega):= \{ v \in H^1(\Omega; \R^2) \mid  v \vert_{\Gamma_{\rm D}} = 0 \}$  minimizing the strain energy function 
\begin{equation}
\min_{u \in K} E(u), \;\; E(u):= \frac{1}{2} \int_{\Omega} \mathsf C \, \epsilon(u) : \epsilon(u) \, \dd x -  \int_{\Omega} f \cdot u  \, \dd x,  \label{eq:signorini_pb}
\end{equation}
where $\partial \Omega = \overline{\Gamma_\mathrm{D} \cup \Gamma_\mathrm{T}}$ and
\begin{equation}
    K = \{ u \in V \mid u \cdot n \leq g \text{ on } \Gamma_\mathrm{T}\}. 
\end{equation}
Here, the boundary of $\Omega$ consists of two (relatively) open, disjoint subsets $\partial \Omega = \overline{\Gamma_{\rm T} \cup \Gamma_{\rm D}}$ with $|\Gamma_{\rm T}| > 0 $ and $|\Gamma_{\rm D}| > 0$.
For further simplicity, we assume that the contact boundary $\Gamma_{\rm T}$ is an open straight line segment and fix a gap function $g \in H^1(\Omega)$ with $ g \vert_{\Gamma_{\rm D}} =  \delta \rami{ \geq } 0$. Recall that in this case, $\lambda^* = (\mathsf C \epsilon(u^*) n) \cdot n$.

We work with a quasi-uniform mesh $\mathcal{T}_h$, and consider the continuous Lagrange spaces on $\Omega$ and on $\Omega_d = \Gamma_{\mathrm{T}}$: 
\begin{subequations}
\begin{align}
    V_h & = (\mathbb{P}_1(\mathcal{T}_h) \cap H^1_{\mathrm{D}}(\Omega))^2, \\ 
    W_h & = \{ w_h \in C(\overline {\Gamma_{\mathrm{T}}}) \mid w_h \vert_E \in \mathbb{P}_1(E) ~\fa \text{edges }E  \subset \Gamma_{\mathrm{T}}, \;\; w_h = 0 \text{ on } \partial \Gamma_{\mathrm{T}} \}. 
\end{align}
\end{subequations}
In the above, $\partial \Gamma_{\mathrm{T}}$ denotes the boundary of $\Gamma_{\mathrm{T}}$; i.e.,\ the nodes shared between $\overline{\Gamma_{\mathrm{D}}}$ and $\overline{\Gamma_{\mathrm{T}}}$. To ensure compatibility, 
note that the degrees of freedom of $\psi_h^k$ in \cref{eq:discrete_lvpp} are set to $\nabla \mathcal{R}(0)$ on the nodes belonging to $\overline{\Gamma_{\mathrm{D}}} \cap \overline{\Gamma_{\mathrm{T}}}$; cf.\ \Cref{rem:Dirichlet}. In what follows, we will denote the normal and tangential components of a vector field $v \in H^1(\Omega)^2$ by $v_n$ and $v_\tau$, respectively.  

For the considered problem, one can utilize the following choice of Legendre function $\mathcal{R}$: 
\begin{equation}
    \mathcal{R}(u) = (g - u) \ln (g - u) - (g -u),  \label{eq:Legendre_sing}
\end{equation}
which admits the convex conjugate 
\begin{equation}
\mathcal{R}^*(\psi) = \exp(-\psi) + g \psi. 
\end{equation}
We provide \Cref{alg:sign}, the application of the PG method (\Cref{alg:main_alg_discrete}) to this problem for completeness; see also \cite[Example 2]{dokken2025latent}. 
\begin{algorithm}[htb]
\caption{The Proximal Galerkin Method for the Signorini Problem}
\begin{algorithmic}[1]\label{alg:sign}
    \State \textbf{input:} Initial latent solution guess $\psi_h^0  \in W_h$, a sequence of positive proximity parameters $\{\alpha_k\}$.
        \State Initialize \(k = 1\). 
    \State \textbf{repeat}
    \State \quad Find $u_h^{k} \in V_{h}$
and $\psi_h^{k} \in \nabla \mathcal{R}(0) +   W_h$ such that 
   \begin{subequations} \label{eq:lvpp_sig}
\begin{alignat}{2}  
\alpha_k \, (\mathsf C\, \epsilon(u^{k}_h), \epsilon(v_h) )  + (v_h \cdot n,\psi^{k}_h - \psi_h^{k-1})_{\Gamma_{\mathrm{T}}} & = \alpha_k \, (f, v_h)  && ~\fa v_h \in V_h, \label{eq:lvpp_sing_0}\\ 
(u^{k}_h \cdot n,  w_h)_{\Gamma_{\mathrm{T}}} +  (\exp (-\psi^{k}_h), w_h)_{\Gamma_{\mathrm{T}}}& = (g , w_h)_{\Gamma_{\mathrm{T}}} && ~\fa  w_h \in W_h.  \label{eq:lvpp_sing_1}
\end{alignat}
\end{subequations} 
\State \quad Assign  \(k \gets k + 1\).
    \State \textbf{until} a convergence test is satisfied.
\end{algorithmic}
\end{algorithm}

\noindent\textbf{Main goal}:  We derive  error estimates for \Cref{alg:sign} in \Cref{cor:sig_error}. We also apply the framework presented in \Cref{sec:main_results}, utilizing \Cref{cor:err_rate_general}. To this end, we proceed by constructing Fortin and reconstruction operators satisfying \Cref{assumption:maptointerior,assumption:err_rates}. 
 \begin{lemma}[Fortin operator]  \label{lemma:fortin_sing}
Let $\mathcal{T}_h$ be quasi-uniform.  There exists a map $ \tilde \Pi_h : H^1_{\mathrm{D}}(\Omega) \rightarrow V_h$ such that 
 \[ 
 \int_{\Gamma_{\mathrm{T}}} \tilde \Pi_h v  w_h \dd s =  \int_{\Gamma_{\mathrm{T}}}  v  w_h  \dd s ~\fa w_h \in W_h. 
 \]
In addition, the Fortin operator $\Pi_h: H^1_{\mathrm{D}}(\Omega)^2  \rightarrow V_h$ given by $\Pi_h w = (\tilde \Pi_{h} w_n ) n + (\mathcal{SZ}_h w_\tau) \tau$ satisfies \cref{eq:fortin_map} and the stability and approximation bounds \cref{eq:fortin_stability_assump_L2,eq:fortin_stability_assump_H1,eq:fortin_rates} stated in \Cref{assumption:err_rates}.  
\end{lemma}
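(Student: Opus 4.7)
The plan is to construct $\tilde\Pi_h v$ as a Scott--Zhang interpolant corrected by a boundary-layer term. Writing
\begin{equation*}
\tilde \Pi_h v = \mathcal{SZ}_h v + B_h v,
\qquad
B_h v = \sum_{z \in \mathcal{N}_h \cap \Gamma_{\mathrm T} \setminus \partial\Gamma_{\mathrm T}} c_z \varphi_z,
\end{equation*}
I would fix the coefficients $c_z$ by imposing the square boundary mass-matrix system $\sum_z c_z (\varphi_z,\varphi_{z'})_{\Gamma_{\mathrm T}} = (v - \mathcal{SZ}_h v, \varphi_{z'})_{\Gamma_{\mathrm T}}$ for each interior node $z'$ of $\Gamma_{\mathrm T}$. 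Since $W_h$ is exactly spanned by the traces $\{\varphi_{z'}|_{\Gamma_{\mathrm T}}\}$ for such $z'$, the moment condition on $\tilde\Pi_h$ holds by design. The vector Fortin identity for $\Pi_h w = (\tilde\Pi_h w_n) n + (\mathcal{SZ}_h w_\tau) \tau$ then reduces to this scalar moment identity via $\Pi_h w \cdot n = \tilde\Pi_h w_n$. Because each $\varphi_z$ in the correction vanishes on $\Gamma_{\mathrm D}$, one has $\tilde\Pi_h v \in \mathbb{P}_1(\mathcal{T}_h) \cap H^1_{\mathrm D}(\Omega)$.

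To control $B_h v$, I would exploit quasi-uniformity of the induced trace mesh on $\Gamma_{\mathrm T}$: the one-dimensional $\mathbb{P}_1$ mass matrix has spectrum proportional to $h$, which together with Cauchy--Schwarz yields $\|c\|_{\ell^2} \lesssim h^{-1/2}\|v - \mathcal{SZ}_h v\|_{L^2(\Gamma_{\mathrm T})}$. Combined with $\|\varphi_z\|_{L^2(\Omega)} \lesssim h$, $\|\nabla \varphi_z\|_{L^2(\Omega)} \lesssim 1$, and the finite overlap of nodal patches, this gives
\begin{equation*}
\|B_h v\|_{L^2(\Omega)}^2 \lesssim h\,\|v - \mathcal{SZ}_h v\|_{L^2(\Gamma_{\mathrm T})}^2,
\qquad
\|\nabla B_h v\|_{L^2(\Omega)}^2 \lesssim h^{-1}\|v - \mathcal{SZ}_h v\|_{L^2(\Gamma_{\mathrm T})}^2.
\end{equation*}
A local trace inequality on elements adjacent to $\Gamma_{\mathrm T}$ together with the standard Scott--Zhang estimates yields $\|v - \mathcal{SZ}_h v\|_{L^2(\Gamma_{\mathrm T})}^2 \lesssim h^{1+2s}|v|_{H^{1+s}(\omega)}^2$ for $v \in H^{1+s}$, $s \in [0,1]$, with $\omega$ a fixed mesh-layer neighborhood of $\Gamma_{\mathrm T}$. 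Setting $s = 0$ gives $L^2$ and $H^1$ stability of $B_h$, while $s \in (0,1]$ yields the optimal $h^{1+s}$ approximation rate for $\tilde\Pi_h$.

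Adding the Scott--Zhang bulk estimates shows that $\tilde\Pi_h$ satisfies all of \cref{eq:fortin_stability_assump_L2,eq:fortin_stability_assump_H1,eq:fortin_rates}, and the lift to $\Pi_h$ is immediate because $\Gamma_{\mathrm T}$ is straight: $n$ and $\tau$ are constant, so $w = w_n n + w_\tau\tau$ preserves $H^{1+s}$-regularity and the two scalar components can be estimated independently. The main obstacle is propagating the mass-matrix spectral bound through the trace inequality without sacrificing the $h^{1+s}$ rate; quasi-uniformity of $\mathcal{T}_h$ is essential here, since it guarantees the uniform-in-$h$ conditioning of the one-dimensional boundary mass matrix used to solve for $c$.
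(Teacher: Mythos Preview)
Your proposal is correct and is essentially the same construction as the paper's: defining $\tilde\Pi_h v(z) = \pi_h v(z)$ at nodes $z\in\Gamma_{\mathrm T}$ (with $\pi_h$ the $L^2(\Gamma_{\mathrm T})$-projection onto $W_h$) and $\tilde\Pi_h v(z) = \mathcal{SZ}_h v(z)$ elsewhere is exactly your decomposition $\tilde\Pi_h v = \mathcal{SZ}_h v + B_h v$, since $\mathcal{SZ}_h v|_{\Gamma_{\mathrm T}}\in W_h$ forces $c_z = \pi_h v(z) - \mathcal{SZ}_h v(z)$. The paper's local inverse estimate $|\pi_h v(z)|\lesssim h_E^{-1/2}\|\pi_h v\|_{L^2(E)}$ plays the same role as your boundary mass-matrix spectral bound, and both routes close via the same trace inequality and Scott--Zhang approximation properties.
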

\begin{proof}
We define $\tilde \Pi_h$ and show its stability and approximation properties. The stated bounds for the operator $\Pi_h$ can then be deduced form the properties of $\tilde \Pi_h$ and of the Scott-Zhang interpolant. Let $\pi_h$ denote the $L^2(\Gamma_{\mathrm{T}})$ projection onto $W_h$: for $v\in L^2(\Gamma_{\mathrm{T}})$,  $\pi_h v \in W_h$ solves
\begin{equation}
 \int_{\Gamma_{\mathrm{T}}} (\pi_h v - v)  w_h = 0 ~\fa w_h \in W_h.
\end{equation}
For the nodes $z$ on $\Gamma_{\mathrm{T}}$ (i.e., $z \in \mathcal{N}_h \cap \Gamma_{\mathrm{T}}$), set $\tilde \Pi_h v(z) = \pi_h v(z)$. On the remaining nodes in $\mathcal{N}_h$, set $\tilde \Pi_h v(z) = \mathcal{SZ}_h v (z)$. 
We split $\mathcal{T}_h$ into two subsets $\mathcal{T}_{\mathrm{T},h}$ and $\mathcal{T}_h \backslash \mathcal{T}_{\mathrm{T},h}$, where $\mathcal{T}_{\mathrm{T},h}$ consists of elements that share a node $z \in \mathcal{N}_h \cap \Gamma_{\mathrm{T}}$.
Denoting the associated macro-element by $\Delta_T$, we observe that
\[ 
\|\tilde \Pi_h v \|_{L^2(T)} = \|\mathcal{SZ}_h v\|_{L^2(T)} \lesssim \|v\|_{L^2(\Delta_T)} + h_T \|\nabla v\|_{L^2(\Delta_T)}
,\quad T \in \mathcal{T}_h \backslash \mathcal{T}_{\mathrm{T},h},
\]
where we used that $\tilde \Pi_h v(z) = \mathcal{SZ}_h v(z)$ for all $z \in \mathcal{N}_h \backslash \Gamma_{\mathrm{T}}$ and the local stability properties of $\mathcal{SZ}_h$ \cite{scott1990finite}.
For $z \in \mathcal{N}_h \cap \Gamma_{\mathrm{T}}$, we can apply a local inverse inequality to deduce that
\begin{align} \label{eq:stability_fortin_sing}
|\tilde \Pi_h v(z)|  = |\pi_h v(z)| \lesssim h_E^{-1/2} \|\pi_h v\|_{L^2(E) }, \quad E \subset \Gamma_{\mathrm{T}}. 
\end{align}
Therefore, since $\|\varphi_z\|_{L^2(T)} \lesssim h_T$, we obtain
\begin{align*}
\|\tilde \Pi_h v\|_{L^2(T)}^2 \lesssim \sum_{z \in \mathcal{V}_T} \| \tilde \Pi_h v(z) \varphi_z\|_{L^2(T)}^2 \lesssim  h_E \|\pi_h v\|^2_{L^2(E)} +   \|v\|_{L^2(\Delta_T)}^2 + h_T^2\|\nabla v\|_{L^2(\Delta_T)}^2, 
 \end{align*}
for all $T \in \mathcal{T}_{\mathrm{T},h}$ and some facet $E \subset \Gamma_{\mathrm{T}}$.
In particular, if $T$ shares a facet $E$ with $\Gamma_{\rm T}$, then we can select this facet. Otherwise, the facet of a neighboring element is selected. Summing over all the elements and noting that each facet $E \subset \Gamma_{\mathrm{T}}$ is counted at most $\max_{z \in \mathcal{N}_h \cap \Gamma_{\mathrm{T}}} \operatorname{card}(\omega_z)$ times, which is bounded uniformly w.r.t.\ $h$ thanks to shape-regularity of the mesh $\mathcal{T}_h$, we obtain that 
\begin{equation}\label{eq:stability_fortin_sig_0}
\|\tilde \Pi_h v\|_{L^2(\Omega)} \lesssim h^{1/2} \|\pi_h v\|_{L^2(\Gamma_{\mathrm{T}})} + \|v\|_{L^2(\Omega)} + h\|\nabla v\|_{L^2(\Omega)}.
\end{equation}
Using the stability of the projection $\pi_h$ and a global trace inequality coming from the quasi-uniformity of $\mathcal{T}_h$, we obtain that 
\begin{equation}
    h^{1/2} \|\pi_h v \|_{L^2(\Gamma_\mathrm{T})} \leq h^{1/2} \|v\|_{L^2(\Gamma_\mathrm{T})} \lesssim \|v\|_{L^2(\Omega)} + h \|\nabla v\|_{L^2(\Omega)}. \label{eq:global_trace}
\end{equation} 
Combining \cref{eq:global_trace,eq:stability_fortin_sig_0} shows that \cref{eq:fortin_stability_assump_L2} holds. 

We proceed to verify the error estimate \cref{eq:fortin_rates}. Note that $\Pi_h v_h = v_h $ for any $v_h \in \mathbb{P}_1(\mathcal{T}_h) \cap H^1_{\rm D}(\Omega)$. To see this, note that $\pi_h v_h = v_h \vert_{\Gamma_{\rm{T}}}$ since $v_h \vert_{\Gamma_{\mathrm{T}}} \in W_h$ and $\mathcal{SZ}_h v_h (z) = v_h(z)$ for all $z \in \mathcal{N}_h$ \cite{scott1990finite}.  Hence, 
\begin{align}
\| v - \tilde \Pi_h v\|_{L^2(\Omega)} &\leq \|v - \mathcal{SZ}_h v\|_{L^2(\Omega)} + \|\tilde \Pi_h (v - \mathcal{SZ}_h v) \|_{L^2(\Omega)} \\
\nonumber 
& \lesssim
\|v - \mathcal{SZ}_h v\|_{L^2(\Omega)}  + h \|\nabla (v - \mathcal{SZ}_h v)\|_{L^2(\Omega)},  
\end{align}
where we used the just proven stability property~\cref{eq:fortin_stability_assump_L2} of $\Pi_h$. The estimate on the $L^2$ error stated in \cref{eq:fortin_rates} can now be concluded from the approximation properties of $\mathcal{SZ}_h$. To show \cref{eq:fortin_stability_assump_H1}, one uses an inverse estimate followed by approximation properties.  Namely, we  have 
\begin{multline}
\|\nabla \tilde \Pi_h v\|_{L^2(\Omega)} \leq \|\nabla (\tilde \Pi_h v - \mathcal{SZ}_h v) \|_{L^2(\Omega)} + \| \nabla \mathcal{SZ}_h v\|_{L^2(\Omega)} \\  \lesssim  h^{-1}  \| \tilde  \Pi_h v - \mathcal{SZ}_h v \|_{L^2(\Omega)} + \| \nabla \mathcal{SZ}_h v\|_{L^2(\Omega)} \lesssim \|\nabla v\|_{L^2(\Omega)}, 
\end{multline}
where we used the stability of $\mathcal{SZ}_h$ in the $H^1(\Omega)$-seminorm, see \cite[Corollary 4.8.15]{brenner2008mathematical}.
The final error estimate for $\tilde \Pi_h$ in the $H^1(\Omega)$-seminorm is proven similarly. 
\end{proof} 
In order to check \Cref{assumption:maptointerior}, we observe that the set $K_h$ \cref{eq:general_set_Kh} satisfies 
\begin{equation} \label{eq:Kh_signorini}
K_h = \{ u_h \in V_h \mid ( u_h \cdot n  - g, \varphi_z)_{\Gamma_{\mathrm{T}}}  \leq 0 ~\fa z \in \mathcal{N}_h \cap \Gamma_{\mathrm{T}} \}. 
\end{equation}
To define the reconstruction operator required by \Cref{assumption:maptointerior}, we begin by defining a modified Cl\'ement interpolant ${\mathcal C}_h: H^1(\Omega) \rightarrow H^1(\Omega) \cap \mathbb{P}_1(\mathcal{T}_h)$ using the strategy outlined in \Cref{subsec:obstacle_prelim_2}.  
To this end, we require a technical assumption on the mesh. Note that this assumption can be easily satisfied by a local refinement near the boundary $\partial \Gamma_{\mathrm{T}} = \overline{\Gamma_{\rm T}} \cap \overline{\Gamma_{\rm D}}$.
\begin{assumption} \label{assump:mesh_Sing}
Assume that $\Gamma_{\mathrm{T}}$ contains at least two mesh facets.
For each boundary node $z \in \partial \Gamma_{\mathrm{T}}$, let $E_z^1$ denote the unique boundary facet in $\Gamma_{\rm T}$ containing $z$.
Likewise, let $E_z^2 \subset \Gamma_T$ denote the unique boundary facet neighboring $E_z^1$ and not containing $z$.
Assume that $|E_z^1| \geq |E_z^2|$.
\end{assumption} 

For any boundary node $z \in \mathcal{N}_h \cap \partial\Omega$, we let $\tilde{\omega}_z$ denote the union of the boundary facets sharing $z$.
Then, denoting the three distinct vertices of the facets in $\tilde{\omega}_z$ by $\{z_0, z_1, z_2\}$ with $z_0 = z$, we define 
\begin{align}
{\mathcal C}_h v  = \sum_{z \in \mathcal{N}_h}  v_z \varphi_z, \;\;\;
 v_z =  \sum_{i=0}^2 \frac{ \alpha_{z,i}}{\int_{\tilde{\omega}_{z_i}} \varphi_{z_i} } \int_{\tilde{\omega}_{z_i}}  v \varphi_{z_i} \dd s ~\text{ if } z \in \mathcal{N}_h \cap \Gamma_{\mathrm{T}},  \label{eq:clement_sig}
\end{align}
where the weights $\alpha_{z,i} \geq 0$, $i=0,1,2$, are defined below. 
For the remaining nodes in $\mathcal{N}_h$, we set  ${v}_z = \mathcal{SZ}_h v (z)$.

We now define ${\alpha}_{z,i}$.  Let $(\hat w_j, \hat x_j)$, $j=0,1$,  be the non-negative weights and points given by the Gauss--Radau quadrature rule \cite[Table 6.1]{ern2021finite1}, which is an exact quadrature rule for polynomials of degree $2$ on the reference facet $\hat E$. Let $F_E$ denote the affine linear transformation from the reference facet $\hat E$ to a facet  $E \subset \Gamma_{\mathrm T}$.  Observe that these quadrature points define weights and points $(w_j, x_j)$ with $x_j \in \tilde{\omega}_z$ for $j = 0,1,2,3$, such that for any continuous piecewise second-order polynomial $v$ on  $\tilde{\omega}_z = E_1 \cup E_2 $, we can write
\begin{equation}
\int_{ \tilde{\omega}_z} v \dd s = \sum_{j=0}^1 \frac{|E_1|}{|\hat E|} \hat w_j v (F_{E_1}(\hat x_j)) +  \sum_{j=0}^1 \frac{|E_2|}{|\hat E|} \hat w_j v (F_{E_2}(\hat x_j))  
= \sum_{j=0}^3 w_j  v(x_j). 
\end{equation} 
For any $z \in  \mathcal{N}_h \cap \Gamma_{\mathrm{T}}$, there exists a point $s_z \in \tilde{\omega}_z$ such that 
\begin{equation}
s_z = \frac{1}{\int_{\tilde{\omega}_z} \varphi_z} \int_{\tilde{\omega}_z} x \varphi_z(x) \dd s =  \frac{1}{\int_{\tilde{\omega}_z} \varphi_z}  \sum_{j=0}^3 w_j x_j \varphi_z(x_j).
\end{equation}
Therefore, $s_z$ is a convex combination of the points $\{x_j\}_{j=0,\ldots, 3}$ belonging to $\tilde{\omega}_z$.
Note that $z$ must lie either on the line connecting $s_{z_0}$ to $ s_{z_1}$ or on the line connecting $s_{z_0}$ to $s_{z_2}$. This means that there exist convex weights $ \alpha_{z_i} \geq 0$ (with one $ \alpha_{z_i} = 0$), such that 
\begin{equation}
 z = \sum_{i=0}^2  \alpha_{z_i} s_{z_i}, \quad \sum_{i=0}^2   \alpha_{z_i}  = 1,  \quad    \alpha_{z_i} \geq 0.  \label{eq:def_convex_sing}
\end{equation}
Further, \Cref{assump:mesh_Sing} guarantees that the weights corresponding to the boundary nodes in $\partial \Gamma_{\mathrm{T}}$ are zero in \cref{eq:def_convex_sing}; i.e.,  $\alpha_z =0$ if $z \in  \partial \Gamma_{\mathrm{T}}$. 
In turn, for each $z \in \mathcal{N}_h \cap \Gamma_{\mathrm{T}}$, there exists an element $T_z \in \mathcal{T}_h$, $T_z \subset \cup_{z_i} \omega_{z_i}$, such that if $p \in \mathbb{P}_1(\Delta_{T_z})$, it holds that
\begin{equation}
\label{eq:SignoriniInterpolantReproducingLinearFunctions}
{\mathcal{C}}_h  p(z) = p(z). 
\end{equation}

\begin{lemma}[Approximation properties of $\mathcal{C}_h$] \label{lemma:clement_sing}
Let $\mathcal{T}_h$ be quasi-uniform and let \Cref{assump:mesh_Sing} hold. For $ 0 \leq s \leq 1$, it follows that
\begin{alignat}{2}\label{eq:clement_error_estimate_1}
\|w- \mathcal{C}_h w \|_{L^2(\Omega)} + h \|\nabla(w - \mathcal{C}_h w)\|_{L^2(\Omega)}  &\lesssim h^{1+s}| w|_{H^{1+s}(\Omega)}  && ~\fa w \in H^{1+s}(\Omega). 
\end{alignat}
\end{lemma}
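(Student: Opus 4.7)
The plan is to mirror the strategy already used in \Cref{lemma:Clement} and \Cref{lemma:clement_continuous}: establish a local linear-reproduction property for $\mathcal{C}_h$, apply a Bramble--Hilbert argument to reduce the error bound to estimating $\|\mathcal{C}_h(w-p)\|_{L^2(T)}$ for a locally-linear $p$, and then bound this term elementwise by a stability estimate.

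First I would verify that $\mathcal{C}_h p = p$ on $\Omega$ for every $p\in\mathbb{P}_1(\mathbb{R}^2)$, or more generally on any element $T$ when $p$ is linear on a sufficiently large neighborhood $\Delta_T$ that contains both the Scott--Zhang patches used at vertices in $\mathcal{V}_T\setminus\Gamma_{\mathrm{T}}$ and the boundary patches $\tilde{\omega}_{z_i}$ used at vertices $z\in\mathcal{V}_T\cap\Gamma_{\mathrm{T}}$. For interior or $\Gamma_{\mathrm D}$-boundary vertices this is the standard Scott--Zhang property. For $z\in\mathcal{N}_h\cap\Gamma_{\mathrm T}$ it is exactly~\cref{eq:SignoriniInterpolantReproducingLinearFunctions}: the two-point Gauss--Radau rule is exact on degree-$2$ polynomials, so $\frac{1}{\int_{\tilde\omega_{z_i}}\varphi_{z_i}}\int_{\tilde\omega_{z_i}} p\varphi_{z_i}\,ds = p(s_{z_i})$ whenever $p$ is affine, and \cref{eq:def_convex_sing} together with \Cref{assump:mesh_Sing} collapses $\sum_i\alpha_{z,i}p(s_{z_i})$ into $p(z)$.

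The main obstacle, and the only place where the argument departs from \Cref{lemma:clement_continuous}, is bounding $|\mathcal{C}_h v(z)|$ at nodes $z\in\mathcal{N}_h\cap\Gamma_{\mathrm T}$, because the defining averages are taken over one-dimensional boundary patches. Using Cauchy--Schwarz together with $\int_{\tilde\omega_{z_i}}\varphi_{z_i}\sim h$ and $\|\varphi_{z_i}\|_{L^2(\tilde\omega_{z_i})}\sim h^{1/2}$ yields
\[
    |\mathcal{C}_h v(z)|
    \lesssim
    h^{-1/2}\|v\|_{L^2(\tilde\omega_z)},
    \qquad
    \tilde\omega_z = \textstyle\bigcup_i \tilde\omega_{z_i}
    .
\]
A scaled trace inequality on every element touching $\tilde\omega_z$ then gives $\|v\|_{L^2(\tilde\omega_z)}\lesssim h^{-1/2}\bigl(\|v\|_{L^2(\omega_z^\ast)}+h\|\nabla v\|_{L^2(\omega_z^\ast)}\bigr)$, where $\omega_z^\ast$ is the (finite, $h$-uniformly bounded cardinality) union of adjacent simplices. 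Combining these with $\|\varphi_z\|_{L^2(T)}\lesssim h$ produces the elementwise stability estimate $\|\mathcal{C}_h v\|_{L^2(T)}\lesssim \|v\|_{L^2(\Delta_T)} + h\|\nabla v\|_{L^2(\Delta_T)}$, which is the two-dimensional analogue of the bound used in \Cref{lemma:clement_continuous}. Interior vertices contribute identical bounds via the standard Scott--Zhang stability.

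Setting $v = w - p$ with $p\in\mathbb{P}_1(\Delta_T)$ chosen by Bramble--Hilbert, and using the local linear reproduction $\mathcal{C}_h p = p$ to write $w - \mathcal{C}_h w = (w-p) - \mathcal{C}_h(w-p)$, I obtain
\[
    \|w - \mathcal{C}_h w\|_{L^2(T)}
    \lesssim
    h^{1+s}|w|_{H^{1+s}(\Delta_T)}
    ;
\]
squaring, summing over $T\in\mathcal{T}_h$, and invoking the finite overlap of the macro-elements (from shape regularity) yields the $L^2$ portion of~\cref{eq:clement_error_estimate_1}. For the $H^1$-seminorm, I would split $\mathcal{C}_h w - w = (\mathcal{C}_h w - \mathcal{SZ}_h w) - (w - \mathcal{SZ}_h w)$, bound the first difference via a local inverse inequality applied to the piecewise-polynomial function together with the $L^2$ bound just established and the Scott--Zhang $L^2$ estimate, and control the second by the standard $H^1$ approximation property of $\mathcal{SZ}_h$. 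Quasi-uniformity of $\mathcal{T}_h$ is used here to justify the global trace and inverse estimates at the scale $h$.
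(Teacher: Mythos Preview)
Your proposal is correct and follows the same overall strategy as the paper: local linear reproduction via~\cref{eq:SignoriniInterpolantReproducingLinearFunctions}, a nodal stability bound at the $\Gamma_{\mathrm T}$-nodes combined with Bramble--Hilbert for the $L^2$ estimate, and the split through $\mathcal{SZ}_h$ plus an inverse inequality for the $H^1$ seminorm.

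The only notable difference is in how you pass from the one-dimensional boundary norm to a bulk norm. You apply the continuous scaled trace inequality directly to $v=w-p\in H^1(\Delta_T)$, obtaining $\|v\|_{L^2(\tilde\omega_z)}\lesssim h^{-1/2}\bigl(\|v\|_{L^2(\omega_z^\ast)}+h\|\nabla v\|_{L^2(\omega_z^\ast)}\bigr)$ and hence the local stability $\|\mathcal{C}_h v\|_{L^2(T)}\lesssim \|v\|_{L^2(\Delta_T)}+h\|\nabla v\|_{L^2(\Delta_T)}$ in one stroke. The paper instead inserts a discrete approximant $v_h=\mathcal{SZ}_hv$, works with the polynomial $v_h-p$ so that a \emph{discrete} inverse trace estimate suffices (no gradient term), bounds $|(p-v_h)(z)|$ by an $L^\infty$-inverse inequality, and only afterwards brings in $\|v_h-v\|_{L^2}$ and Bramble--Hilbert. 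Your route is more direct and avoids juggling the extra intermediary $v_h$; the paper's route has the mild advantage that the trace step is purely algebraic (no $H^1$ regularity needed at that point). Both yield the same final bound.
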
 
\begin{proof}  
We first consider the error $(\mathcal{C}_h v_h - v_h)$ for any $v_h \in V_h$. For a node 
$z \in \Gamma_{\mathrm{T}}$, we consider the element $T_z$ such $\mathcal{C}_h p (z) = p(z)$ for $p \in \mathbb{P}_1(\Delta_{T_z})$; cf.\ \cref{eq:SignoriniInterpolantReproducingLinearFunctions}. Then, 
\begin{align*}
(\mathcal{C}_h v_h - v_h) (z) = \mathcal{C}_h(v_h-p) (z) + (p -v_h)(z), \; \; p \in \mathbb{P}_1(\Delta_{T_z}).  
\end{align*}
With Cauchy--Schwarz inequality, the  observation that 
$\|\varphi_z\|_{L^2(\omega_{z_i})} \lesssim h^{1/2}$, and a trace inequality, we estimate 
 \begin{align}
     |\mathcal{C}_h (v_h-p)(z)|   & \leq \sum_{i=0}^2 |\tilde \alpha_{z_i}| (2 |\tilde \omega_{z_i}|^{-1}) 
     \|v_h-p\|_{L^2(\tilde\omega_{z,i})} \|\varphi_z\|_{L^2(\tilde \omega_{z_i})}  \\  \nonumber 
     & \lesssim  \sum_{i=0}^2 |\tilde \alpha_{z_i} |h^{-1/2} \|v_h -p\|_{L^2(\tilde \omega_{z_i})}
     \\ \nonumber
    & \lesssim h^{-1} \|v_h -p\|_{L^2(\Delta_{T_z})}.
 \end{align}
 In addition, we apply an inverse estimate to bound 
 \begin{align}
    |(p-v_h)(z)| \leq \|p-v_h\|_{L^{\infty}(\Delta_{T_z})} \lesssim h^{-1} \|p-v_h\|_{L^2(\Delta_{T_z})}
 \end{align} 
 since $(p-v_h) \in V_h$.
 Combining the above estimates, followed by applying the triangle inequality and the Bramble--Hilbert Lemma, we arrive at
\begin{align}
|(\mathcal{C}_h v_h - v_h )(z)| \label{eq:bounding_nodal_clement}
& \lesssim h^{-1} ( \|v_h -v \|_{L^2(\Delta_{T_z})} + \|v - p\|_{L^2(\Delta_{T_z})})
 \\ \nonumber 
& \lesssim  h^{-1} \|v_h -v \|_{L^2(\Delta_{T_z})} + h^{s} |v|_{H^{1+s}(\Delta_{T_z})}.  
\end{align}
Select $v_h = \mathcal{SZ}_h v$ and recall that by definition of $\mathcal{C}_h$, we have that $(\mathcal{C}_h - \mathcal{SZ}_h ) v(z) = 0 $ for all $z \in \mathcal{N}_h \backslash \Gamma_{\mathrm{T}}$.  
Therefore, for any $T \in \mathcal{T}_h$, we conclude that 
\begin{equation}
(\mathcal{C}_h v - \mathcal{SZ}_hv ) \vert_T =  \sum_{z \in \Gamma_{\mathrm{T}} \cap \mathcal{V}_T} (\mathcal{C}_hv - \mathcal{SZ}_hv )(z) \varphi_z. 
\end{equation}
Using that $\|\varphi_z\|_{L^2(T)} \lesssim h_T$, \cref{eq:bounding_nodal_clement}, and applying Cauchy--Schwarz inequality, we obtain that 
\begin{equation}
\|\mathcal{C}_h v - \mathcal{SZ}_hv \|_{L^2(T)} \lesssim \sum_{z \in \Gamma_{\mathrm{T}} \cap \mathcal{V}_T}  \left( h^{1+s} |v|_{H^{1+s}(\Delta_{T_z})} + \|\mathcal{SZ}_h v -v\|_{L^2(\Delta_{T_z})} \right) . 
\end{equation}
Summing over all the elements that contain a node in $\Gamma_{\mathrm{T}}$ and applying an inverse estimate, we find 
\begin{multline}
\|\mathcal{C}_h v - \mathcal{SZ}_hv\|_{L^2(\Omega)} + h \| \nabla (\mathcal{C}_h v - \mathcal{SZ}_hv) \|_{L^2(\Omega)} 
\lesssim h^{1+s} |v|_{H^{1+s}(\Omega)} + \|\mathcal{SZ}_h v -v\|_{L^2(\Omega)}. 
\end{multline}
The result can be concluded by applying the triangle inequality and the approximation properties of $\mathcal{SZ}_h$ \cite{scott1990finite}. 
\end{proof}
\begin{corollary}[A priori error estimate for the Signorini problem] \label{cor:sig_error}
Let $u^*$ solve \cref{eq:signorini_pb} and let $(u_h^\ell, \psi_h^\ell)$ come from~\Cref{alg:sign}. Assume that $u^*\in H^{1+s}(\Omega)^2$, $g \in H^{1+s}(\Omega)$ and $- \mathrm{div} (\mathsf C \epsilon(u)) -f \in H^{-1+r}(\Omega)^2$ for some $r,s \in (0,1]$, $\|\psi_h^0\|_{L^\infty(\Gamma_{\rm T})} \lesssim 1$, and \Cref{assump:mesh_Sing} holds. The following error estimate holds:
\begin{equation}
    \| u^* -  u_h^\ell\|_{H^1(\Omega)}^2 + \|\lambda^* - \lambda_h^\ell\|^2_{H^{-1/2}(\Gamma_\mathrm{T})}  \leq \frac{C_{\mathrm{stab}}}{\sum_{k=1}^\ell \alpha_k}  + C_{\mathrm{reg}}\, h^{2 \cdot \min\{r,s\}}. 
\end{equation}
\end{corollary}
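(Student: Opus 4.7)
The strategy is to apply \Cref{cor:err_rate_general} to the Signorini setup, which reduces the task to verifying \Cref{assumption:R}, \Cref{assumption:maptointerior}, and \Cref{assumption:err_rates}, together with the $L^\infty$ bound on $\psi_h^0$. The continuity of $\mathcal R$ in \Cref{assumption:R} for the Legendre function \cref{eq:Legendre_sing} follows, after the affine reflection $y\mapsto g(x)-y$, by the same argument used for \Cref{example:shannon_entropy}; see \cite[Theorem~4.1]{keith2023proximal}. The Fortin operator of \Cref{lemma:fortin_sing} already delivers the inf-sup condition \cref{eq:inf_sup} together with \cref{eq:fortin_stability_assump_L2}--\cref{eq:fortin_rates}.

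The main task is to construct the enriching operator $\mathcal E_h\colon V\to V$ required by \Cref{assumption:maptointerior} and to verify the remainder bound \cref{eq:map_Eh_rates_r}. By analogy with the obstacle construction \cref{eq:enriching_map_obstacle}, I would set
\begin{equation*}
\mathcal E_h w = \mathcal C_h w + \varepsilon,\qquad \varepsilon = \mathcal L_h\bigl[g - \mathcal C_h g - \epsilon_h\bigr]\,n,
\end{equation*}
where $\mathcal C_h$ denotes the modified Cl\'ement interpolant of \Cref{lemma:clement_sing} (applied component-wise to vector fields), $\mathcal L_h\colon H^{1/2}(\Gamma_{\mathrm{T}})\to H^1(\Omega)$ is a continuous right inverse of the trace that vanishes on $\Gamma_{\mathrm{D}}$, and $\epsilon_h$ is a vertex-based bubble-like perturbation analogous to \cref{eq:def_epsilonh}, strictly positive at every interior node of $\Gamma_{\mathrm{T}}$. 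Since $\Gamma_{\mathrm{T}}$ is a straight line segment, the normal $n$ is constant and extending it into $\Omega$ is trivial.

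The core feasibility requirement amounts to showing $g - (\mathcal E_h w)\cdot n = \mathcal C_h(g - w\cdot n) + \epsilon_h > 0$ a.e.\ on $\Gamma_{\mathrm{T}}$ whenever $w\in K\cup K_h$. At every interior node $z\in\mathcal N_h\cap\Gamma_{\mathrm{T}}$, the definition \cref{eq:clement_sig} expresses $\mathcal C_h(g-w\cdot n)(z)$ as a convex combination of $\varphi_{z_i}$-weighted boundary averages of $g-w\cdot n$ on the local patches $\tilde\omega_{z_i}$; these averages are non-negative for $w\in K$ by pointwise feasibility, and non-negative for $w\in K_h$ by the characterization \cref{eq:Kh_signorini}. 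At the boundary nodes $z\in\partial\Gamma_{\mathrm{T}}$ the Dirichlet condition forces $w\cdot n=0$ and $g(z)=\delta>0$, so the Scott--Zhang value used there is also positive. Adding the strictly positive bubble $\epsilon_h$ then yields $g-(\mathcal E_h w)\cdot n>0$, placing $\mathcal E_h w$ in $\operatorname{dom}(\nabla\mathcal R\circ B)\subset K$. The approximation bound \cref{eq:map_Eh_rates_r} for $\varepsilon$ follows by combining \Cref{lemma:clement_sing}, the continuity of $\mathcal L_h$ between the relevant fractional Sobolev spaces, and a fractional inverse estimate on $\epsilon_h$ mirroring \cref{eq:eps_bound}.

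With all hypotheses verified, \Cref{cor:err_rate_general} delivers the $H^1(\Omega)$ bound on $u^\ast - u_h^\ell$. The dual-variable estimate in $H^{-1/2}(\Gamma_{\mathrm{T}})$ follows by invoking \Cref{thm:conv_multipliers} with $Q'=H^{-1/2}(\Gamma_{\mathrm{T}})$ and reusing the interpolation-type bound on $\sup_v \|v - \Pi_h v\|_{H^{1-r}(\Omega)}/\|v\|_{H^1(\Omega)}$ from the proof of \Cref{cor:err_rate_general}. I expect the main technical obstacle to lie in ensuring that the modified Cl\'ement interpolant preserves non-negativity at the boundary nodes in $\partial\Gamma_{\mathrm{T}}$ where $\Gamma_{\mathrm{D}}$ meets $\Gamma_{\mathrm{T}}$; this is precisely why \Cref{assump:mesh_Sing} was imposed, and together with the assumption $g|_{\Gamma_{\mathrm{D}}}=\delta>0$ it guarantees strict feasibility of $\mathcal E_h w$ at every boundary node of $\Gamma_{\mathrm{T}}$.
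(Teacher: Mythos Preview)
Your overall strategy matches the paper's: invoke \Cref{cor:err_rate_general} after constructing an enriching map based on the modified Cl\'ement interpolant of \Cref{lemma:clement_sing}, verify strict feasibility on $\Gamma_{\mathrm T}$ using the characterization \cref{eq:Kh_signorini} at interior nodes and the Dirichlet data $g|_{\Gamma_{\mathrm D}}=\delta$ at the corner nodes, and appeal to \Cref{assump:mesh_Sing} to keep the corner-node averages out of the convex combination. The paper does exactly this, with one simplification worth noting.

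The paper does \emph{not} introduce a trace-lifting operator. It writes the enriching map component-wise as $(\mathcal E_h w)_n=\mathcal C_h(w_n-g)+g-\tilde\epsilon_h$ and $(\mathcal E_h w)_\tau=\mathcal{SZ}_h w_\tau$, so that the remainder is simply $\varepsilon=(g-\mathcal C_h g-\tilde\epsilon_h)\,n$, already a function in $H^1(\Omega)$ because $g\in H^{1+s}(\Omega)$, $\mathcal C_h g\in\mathbb P_1(\mathcal T_h)\cap H^1(\Omega)$, and $\tilde\epsilon_h=\sum_{z\in\mathcal N_h\cap\Gamma_{\mathrm T}}\epsilon\,\varphi_z\in H^1(\Omega)$. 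The bound \cref{eq:map_Eh_rates_r} then follows directly from \Cref{lemma:clement_sing} and an inverse estimate identical to \cref{eq:eps_bound}. Your detour through $\mathcal L_h$ is unnecessary and creates a genuine verification burden: to recover $\|\varepsilon\|_{H^t(\Omega)}\lesssim h^{1+s-t}$ for all $t\in[0,1]$ from boundary data you would need $\mathcal L_h\colon H^{t-1/2}(\Gamma_{\mathrm T})\to H^t(\Omega)$ uniformly in $t$, which is delicate for $t<1/2$ and is precisely the step you gloss over. Dropping $\mathcal L_h$ and using the volumetric $\varepsilon$ closes this gap immediately.
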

\begin{proof}
Given the Fortin operator that we constructed in \Cref{lemma:fortin_sing}, we are in the setting of \Cref{sec:main_results}. We only need to verify the assumptions of \Cref{cor:err_rate_general}.
To this end, we define the normal and tangential components of $\mathcal{E}_h$ as follows:
\rami{
\begin{equation}
    (\mathcal{E}_h w)_n  = \mathcal{C}_h (w_n - g) + g ,  \quad (\mathcal{E}_h w)_\tau = \mathcal{SZ}_h w_\tau ,
\end{equation}
}
To check \Cref{assumption:maptointerior}, we first note that $(\mathcal{E}_h w)_n =  0 $ on $\Gamma_{\rm D}$, $(\mathcal{E}_h w)_n \in H_{\mathrm{D}}^1(\Omega)$ and  $(\mathcal{E}_h w)_\tau \in H_{\mathrm{D}}^1(\Omega)$ from the properties of $\mathcal{SZ}_h$. This implies that $\mathcal{E}_h w \in H^1_{\mathrm D}(\Omega)^2$. 
\rami{We now check that $g - \mathcal{E}_h w \cdot n \geq 0$ on  $\Gamma_\mathrm{T}$ to  ensure that $\mathcal{E}_h w \in K$}.
For $w \in K \cup K_h$, we have that $(u_h \cdot n - g, \varphi_z)_{\tilde{\omega}_z} \leq 0$ for any $z \in \Gamma_{\mathrm{T}}$ since the support of $\varphi_z$ on $\Gamma_{\mathrm{D}}$ is $\tilde\omega_z$.  The additional \Cref{assump:mesh_Sing} on the mesh guarantees that $\tilde \omega_{z}$ for $z\in \partial\Gamma_{\rm T}$ is not included in the definition of $\mathcal{C}_h$ \cref{eq:clement_sig}. 
From \cref{eq:clement_sig}, we now obtain that $\mathcal{C}_h (w_n - g)(z)  \leq 0$ for all $z \in \Gamma_{\mathrm{T}}$.  
In addition, for the nodes on $\partial \Gamma_{\rm T}$, we find $\mathcal{C}_h(w_n - g) = -g \vert_{\Gamma_{\rm D}} = -\delta \rami{\leq} 0$.
Since \rami{$g - \mathcal{E}_h w \cdot n = -\mathcal{C}_h(w_n - g)$}, we conclude that $(g - \mathcal{E}_h w \cdot n ) \geq  0 $ \rami{on $\Gamma_{\mathrm{T}}$}.  This verifies \Cref{assumption:maptointerior}.
 
Noting that \rami{$\mathcal{E}_h w = (\mathcal{C}_h w_n, \mathcal{SZ}_h w_\tau) + (g- \mathcal{C}_h g, 0)$}, \Cref{assumption:err_rates} is verified by applying \Cref{lemma:fortin_sing},  \Cref{lemma:clement_sing}, standard estimates on $\rami{\varepsilon =(g- \mathcal{C}_h g , 0)}$ similar to \cref{eq:obstacle_err_0}, and the approximation properties of the Scott--Zhang interpolant \cite{scott1990finite}.  Details are skipped for brevity. 
\end{proof}

\section{Conclusion}
\label{sec:Conclusion}
The PG method offers a versatile and efficient approach for solving variational problems with pointwise inequality constraints.  
We provided an abstract framework for its \textit{a priori} error analysis in the context of quadratic optimization problems with such constraints. We utilized this framework to derive optimal error estimates for the obstacle and Signorini problems, demonstrating the effectiveness of the PG method.  Numerically, the PG method achieves high-order error rates; however, the error analysis we presented is currently limited to the lowest-order conforming approximations. 
Future work could extend our results to high-order approximation spaces and a broader class of energy functionals. 

\section*{Acknowledgments}
We thank Ioannis P.\ A.\ Papadopoulos for his comments on the manuscript and Thomas M.\ Surowiec and Dohyun Kim for numerous interesting discussions.
BK and RM were supported in part by the U.S.\ Department of Energy, Office of Science Early Career Research Program under Award Number DE-SC0024335 and by the Center for Information Geometric Mechanics and Optimization (CIGMO), a PSAAP-IV Focused Investigatory Center funded by the U.S.\ Department of Energy, National Nuclear Security Administration under Award Number DE-NA0004261.
Keith's work was also supported in part by the Alfred P.\ Sloan Foundation via a Sloan Research Fellowship in Mathematics. MZ acknowledges support from an ETH
Postdoctoral Fellowship for the project “Reliable, Efficient, and Scalable Methods for Scientific
Machine Learning”.

\appendix 
\section{A stability result }
\label{appendix:stability}
In this section, we show a stability result (\Cref{thm:stability} below) for the discrete iterates $(u_h^k,\psi_h^k)$. For simplicity, we assume that 
 $\nabla \mathcal{R}^*$ takes the following form:
\begin{subequations}
\label{eq:specialform_R}
\begin{equation}
\label{eq:specialform_R_decomposition}
\nabla \mathcal{R}^*(\psi) (x) = \phi_0(x) \nabla \mathcal{R}^*_0(\psi(x)) + \phi_1(x), 
\end{equation}
where $\phi_0 \in L^{\infty}(\Omega_d)$ with $\phi_0(x) \geq \underline{\phi_0} > 0$ a.e.\ in $\Omega$,  $\phi_1 \in Q$ and 
\begin{equation} \label{eq:coercivity_R0}
\mathcal{R}_0^*(\psi) - \nu_1 |\psi | \geq c_1 ~\fa \psi \in L^\infty(\Omega_d; \R^m),  
\end{equation}
\end{subequations}
for some $\nu_1 \geq 0$ and $c_1 \in \R$. 

The generalized Shannon entropy~\cref{eq:shannon} does not fit into the form \cref{eq:specialform_R} for {all obstacles $\phi \in L^\infty(\Omega_d; \R^m)$}.
However, the decomposition \cref{eq:specialform_R} is written with sufficient generality to accommodate the settings analyzed in this paper.
In particular, consider the obstacle problem from \Cref{sec:error_rates}, where $\phi \in H^1(\Omega)$ and $\phi \vert_{\partial \Omega} = - \delta$ for a positive constant $\delta$.
In this case, \cref{eq:specialform_R} holds with $\mathcal R_0^*(\psi) = \exp(\psi) - \delta \psi$, $\phi_0 = 1$, and $\phi_1 = \phi + \delta$, implying $\nu_1 = \delta$.
For the Fermi--Dirac entropy given in \Cref{example:fermi_dirac}, which is a suitable choice for bilateral obstacle problems ($B= \mathrm{id}$ and $Q=V = H^1_0(\Omega)$), we can write 
\begin{equation}
\label{eq:FermiDiracMapDecomposition}
\nabla \mathcal{R}^*(\psi) = \frac{1}{2}(\overline{u} - \underline{u}) \frac{\exp(\psi) - 1}{\exp(\psi)+1} + \frac12 (\overline{u} + \underline{u}). 
\end{equation}
Thus, defining $\mathcal R_0^*(\psi) = 2 \ln (\exp(\psi)+1) - \psi$ and assuming that  $\overline{u} - \underline{u} \geq \underline{\phi_0}$ and  $\overline{u} + \underline{u} \in H^1_0(\Omega)$, we deduce that \cref{eq:specialform_R} holds with $\nu_1  = 1$.
Notably, if $\underline{u} , \overline{u} \in \R$ with $\underline{u} < 0 < \overline{u}$, the condition $\underline{u} + \overline{u} \in H^1_0(\Omega)$ is not generally satisfied.
However, in this case, we can still verify \cref{eq:specialform_R} using an alternative decomposition given by $\mathcal R_0^*(\psi) = (\overline{u} - \underline{u}) \ln (\exp (\psi) + 1) + \underline{u}\psi$,  $\phi_0 =1$, and $ \phi_1 = 0$.
The Hellinger entropy, introduced in \Cref{example:Hellinger}, readily satisfies \cref{eq:specialform_R} with $\mathcal R_0^* (\psi) = \gamma \sqrt{1 + |\psi|^2}$, $\phi_0 =1$, and $ \phi_1 = 0$, implying $\nu_1 = \gamma$ and $c_1 = 0$.

\begin{theorem}[Stability] \label{thm:stability}
Assume that \cref{eq:specialform_R} holds. Further, assume that $\psi_h^0$ is selected so that $\|B '\psi_h^{0}\|_{V'}  \lesssim 1$. 
Then there exists a constant $C_{\mathrm{stab}}$, independent of $h$, $\ell$, and $\{\alpha_k\}_{k=2, \ldots } $, such that 
\begin{equation}
\nu \|u_h^\ell\|_V + \|\lambda_h^\ell\|_{Q^\prime}
+
\frac{\| \psi_h^\ell \|_{Q'} +  \nu_1\underline{\phi_0} \| \psi_h^\ell \|_{L^1(\Omega)}}{\sum_{k=1}^\ell \alpha_k}
\leq C_{\mathrm{stab}}
~\fa \ell \geq 1 \text{ and } h > 0.
\label{eq:stability_uh}
\end{equation}
\end{theorem}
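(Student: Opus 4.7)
\textit{Proof plan.} The strategy is to derive, by summing the discrete saddle-point equations, a priori control on a weighted sum $\sum_{k=1}^\ell \alpha_k\|u_h^k\|_V^2$ together with an $L^1$-bound on $\psi_h^\ell$; then to promote the former to a pointwise-in-$\ell$ bound on $\|u_h^\ell\|_V$ via the energy dissipation of \Cref{lemma:energy_dissipation}; and finally to read off the dual bounds from the discrete inf-sup condition.

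Testing with $v_h = u_h^k$ in \cref{eq:lvpp_g_0} and $w_h = \psi_h^k - \psi_h^{k-1}$ in \cref{eq:lvpp_g_1} gives
\begin{equation*}
\alpha_k a(u_h^k,u_h^k) + (\nabla \mathcal{R}^\ast(\psi_h^k),\psi_h^k - \psi_h^{k-1})_{\Omega_d} = \alpha_k F(u_h^k).
\end{equation*}
The convexity inequality $(\nabla \mathcal{R}^\ast(\psi_h^k),\psi_h^k - \psi_h^{k-1})_{\Omega_d} \geq (\mathcal{R}^\ast(\psi_h^k) - \mathcal{R}^\ast(\psi_h^{k-1}),1)_{\Omega_d}$, summation over $k=1,\dots,\ell$, Young's inequality on $\alpha_k F(u_h^k)$, and the coercivity of $a$ produce
\begin{equation*}
\tfrac{\nu}{2}\sum_{k=1}^\ell \alpha_k \|u_h^k\|_V^2 + (\mathcal{R}^\ast(\psi_h^\ell),1)_{\Omega_d} \leq \tfrac{\|F\|_{V'}^2}{2\nu}\sum_{k=1}^\ell \alpha_k + (\mathcal{R}^\ast(\psi_h^0),1)_{\Omega_d}.
\end{equation*}
Integrating \cref{eq:specialform_R_decomposition} gives $\mathcal{R}^\ast(\psi) = \phi_0 \mathcal{R}_0^\ast(\psi) + \phi_1 \psi + C$ for a pointwise integration constant $C(x)$, and combining with \cref{eq:coercivity_R0} yields the lower bound
\(
(\mathcal{R}^\ast(\psi_h^\ell),1)_{\Omega_d} \geq \underline{\phi_0}\nu_1 \|\psi_h^\ell\|_{L^1(\Omega_d)} + (\phi_1, \psi_h^\ell)_{\Omega_d} + \mathrm{const}.
\)

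The coupling term $(\phi_1,\psi_h^\ell)_{\Omega_d}$ is the delicate piece. Since $\phi_1 \in Q = \operatorname{im} B$ and $B$ is surjective, choose $\phi^\sharp \in V$ with $B\phi^\sharp = \phi_1$ and $\|\phi^\sharp\|_V \lesssim \|\phi_1\|_Q$; the Fortin property \cref{eq:fortin_map} then gives $(\phi_1,\psi_h^\ell)_{\Omega_d} = b(\phi^\sharp,\psi_h^\ell) = b(\Pi_h\phi^\sharp,\psi_h^\ell)$, while summing \cref{eq:lvpp_g_0} over $k=1,\dots,\ell$ telescopes to
\begin{equation*}
b(v_h,\psi_h^\ell) = b(v_h,\psi_h^0) + \Big(\sum_{k=1}^\ell \alpha_k\Big)F(v_h) - \sum_{k=1}^\ell \alpha_k \, a(u_h^k,v_h)
\quad~\fa v_h \in V_h.
\end{equation*}
Inserting $v_h = \Pi_h\phi^\sharp$ and invoking the continuity of $a$, Cauchy--Schwarz on $\sum_k\alpha_k\|u_h^k\|_V$, and the hypothesis $\|B'\psi_h^0\|_{V'}\lesssim 1$ gives
\begin{equation*}
\big|(\phi_1,\psi_h^\ell)_{\Omega_d}\big| \lesssim 1 + \sum_{k=1}^\ell \alpha_k + \Big(\sum_{k=1}^\ell \alpha_k\Big)^{1/2}\Big(\sum_{k=1}^\ell \alpha_k \|u_h^k\|_V^2\Big)^{1/2}.
\end{equation*}
One further Young's inequality absorbs the mixed term into the coercivity contribution, yielding the twin bounds
$\sum_{k=1}^\ell \alpha_k \|u_h^k\|_V^2 \lesssim 1 + \sum_{k=1}^\ell \alpha_k$ and $\underline{\phi_0}\nu_1 \|\psi_h^\ell\|_{L^1(\Omega_d)} \lesssim 1 + \sum_{k=1}^\ell \alpha_k$.

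To conclude, \Cref{lemma:energy_dissipation} gives the monotonicity $E(u_h^\ell)\leq E(u_h^1)$, and combined with the quadratic coercivity of $E$ this yields $\nu \|u_h^\ell\|_V^2 \lesssim \|u_h^1\|_V^2 + 1$; the case $\ell = 1$ of the preceding bound controls $\|u_h^1\|_V$ by a constant depending only on $\alpha_1$, so $\nu\|u_h^\ell\|_V\lesssim 1$. Rewriting \cref{eq:lvpp_g_0} via $\lambda_h^k = (\psi_h^{k-1}-\psi_h^k)/\alpha_k$ reads $b(v_h,\lambda_h^k) = \langle E'(u_h^k),v_h\rangle$ on $V_h$, so the discrete LBB condition \cref{eq:inf_sup} and the uniform bound on $\|u_h^k\|_V$ (obtained by the same energy-dissipation argument applied at every iterate) give $\max_k\|\lambda_h^k\|_{Q'}\lesssim 1$. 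Writing $\psi_h^\ell = \psi_h^0 - \sum_{k=1}^\ell \alpha_k\lambda_h^k$, the triangle inequality together with $\|\psi_h^0\|_{Q'}\lesssim \|B'\psi_h^0\|_{V'}\lesssim 1$ (once more by LBB applied to $W_h$) and $\sum_k\alpha_k\geq \alpha_1$ closes the bound on $\|\psi_h^\ell\|_{Q'}/\sum_k\alpha_k$. The principal difficulty lies in the coupling term $(\phi_1,\psi_h^\ell)_{\Omega_d}$: a naive H\"older estimate would require a smallness condition relating $\|\phi_1\|_\infty$ to $\underline{\phi_0}\nu_1$ that is not among our hypotheses, and the detour through the Fortin operator and the telescoped primal equation is essential to convert it into a term controllable by the coercivity contribution on the left-hand side.
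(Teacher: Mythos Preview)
Your overall strategy is sound and has some elegant touches --- the Fortin detour for the coupling term $(\phi_1,\psi_h^\ell)_{\Omega_d}$ is essentially the same device the paper uses (it bounds $\|\psi_h^k\|_{Q'}$ via the Fortin operator and then pairs with $\|\phi_1\|_Q$), and your final arguments for $\|u_h^\ell\|_V$, $\|\lambda_h^\ell\|_{Q'}$, and $\|\psi_h^\ell\|_{Q'}$ via energy dissipation and LBB are correct and close to the paper's.

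There is, however, a genuine gap. Your telescoped inequality leaves the term $(\mathcal{R}^*(\psi_h^0),1)_{\Omega_d}$ on the right-hand side, and you implicitly treat it as a uniform constant. But the only hypothesis on the initial datum is $\|B'\psi_h^0\|_{V'}\lesssim 1$, which does \emph{not} control $\int_{\Omega_d}\mathcal{R}_0^*(\psi_h^0)$. For a concrete counterexample with the Shannon entropy ($\mathcal{R}_0^*=\exp$), take $\psi_h^0$ supported on a single element of size $h$ with value $2\log(1/h)$ there: then $\|\psi_h^0\|_{H^{-1}}\lesssim\|\psi_h^0\|_{L^2}\lesssim h^{1/2}\log(1/h)\to 0$, yet $\int\exp(\psi_h^0)\gtrsim h\cdot h^{-2}=h^{-1}\to\infty$. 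So your ``constant depending only on $\alpha_1$'' in the $\ell=1$ step is actually $h$-dependent, and both the primal bound on $\|u_h^1\|_V$ and the $L^1$-bound on $\psi_h^\ell$ fail to close.

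The paper avoids this by testing \cref{eq:lvpp_g_1} with $w_h=\psi_h^k$ (not $\psi_h^k-\psi_h^{k-1}$) at a \emph{fixed} $k$, which produces $(\nabla\mathcal{R}^*(\psi_h^k),\psi_h^k)_{\Omega_d}$ and the cross term $b(u_h^k,\psi_h^{k-1})=\langle B'\psi_h^{k-1},u_h^k\rangle$ rather than $(\mathcal{R}^*(\psi_h^0),1)_{\Omega_d}$. At $k=1$ this uses only $\|B'\psi_h^0\|_{V'}$, exactly what is assumed. Your approach would go through under the stronger hypothesis $\|\psi_h^0\|_{L^\infty}\lesssim 1$ (which \emph{is} assumed elsewhere in the paper, e.g.\ in \Cref{cor:err_rate_general}), but not under the hypothesis of \Cref{thm:stability} as stated.
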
 
\begin{proof}
We first observe that testing \cref{eq:lvpp_g_0} with $v_h = u_h^k$ and \cref{eq:lvpp_g_1} with $w_h = \psi_h^k$, subtracting the resulting equations, and using the expression~\cref{eq:specialform_R_decomposition} yields
\begin{equation}
\alpha_k a(u_h^k, u_h^k) + (\phi_0 \nabla \mathcal{R}_0^*(\psi_h^k), \psi_h^k)_{\Omega_d} = \alpha_k F(u_h^k) + b(u_h^k, \psi_h^{k-1}) - (\phi_1, \psi_h^k)_{\Omega_d}
\end{equation}
for every $k \geq 1$.
To handle the second term above, we split $\Omega_d$ into $\Omega_d^{-} = \{ x \in \Omega_d \mid \nabla \mathcal{R}_0^*(\psi_h^k)(x) \psi_h^k(x) \leq 0 \}$ and $\Omega_d^+ = \{ x \in \Omega_d \mid \nabla \mathcal{R}_0^*(\psi_h^k)(x) \psi_h^k(x) \geq 0 \}$.
Proceeding, we estimate  
\begin{align*}
 (\phi_0 \nabla \mathcal{R}_0^*(\psi_h^k), \psi_h^k)_{\Omega^+_d} & \geq  \underline{\phi_0} (\nabla \mathcal{R}_0^*(\psi_h^k), \psi_h^k)_{\Omega^+_d}, \\ 
  (\phi_0 \nabla \mathcal{R}_0^*(\psi_h^k), \psi_h^k)_{\Omega^-_d} &  \geq \|\phi_0 \|_{L^\infty(\Omega_d)} (\nabla \mathcal{R}_0^*(\psi_h^k), \psi_h^k)_{\Omega^-_d}.  
\end{align*}
Using the subgradient inequality and \eqref{eq:coercivity_R0}, we also bound 
\begin{equation}
(\nabla \mathcal R_0^*(\psi_h^k), \psi_h^k)_{\Omega_d^\pm} \geq (\mathcal{R}_0^*(\psi_h^k) - \mathcal{R}_0^* (0), 1)_{\Omega_d^{\pm}}
\geq \nu_1 \|\psi_h^k\|_{L^1(\Omega_d^{\pm})} + (c_1 -\mathcal{R}_0^*(0),1)_{\Omega_d^\pm}
.
\end{equation}
Noting that $c_1 \leq \mathcal{R}_0^*(0)$ by~\cref{eq:coercivity_R0}, the above estimates yield 
\begin{align}
 (\phi_0 \nabla \mathcal{R}_0^*(\psi_h^k), \psi_h^k)_{\Omega_d}
 \geq
 \nu_1 \underline{\phi_0} \|\psi_h^k\|_{L^1(\Omega_d)} + \|\phi_0\|_{L^\infty(\Omega_d)}(c_1 -\mathcal{R}_0^*(0),1)_{\Omega_d}.
 \end{align}
Employing the coercivity of the bilinear form $a$ \cref{eq:coercivity_a}, we obtain that 
\begin{align} \label{eq:stab_bound_0}
\alpha_k \nu \|u_h^k\|_{V}^2 + \nu_1 \underline{\phi_0}\|\psi_h^k\|_{L^1(\Omega_d)}  \leq (\alpha_k \|F\|_{V'} + \|B' \psi_h^{k-1}\|_{V'}) \|u_h^k\|_V +  \|\psi_h^k\|_{Q'} \|\phi_1\|_Q + c_2,
\end{align} 
where $c_2 := \|\phi_0\|_{L^\infty(\Omega_d)}(\mathcal{R}_0^*(0) - c_1 ,1)_{\Omega_d} \geq 0$.

Proceeding, we obtain a bound on $\|\psi_h^k\|_{Q'}$. Using the Fortin operator \cref{eq:fortin_map} together with \cref{eq:lvpp_g_0}, we write
\begin{align}
\langle B' \psi_h^k, v \rangle = b(v, \psi_h^k) = b(\Pi_h v, \psi_h^k) = b(\Pi_h v, \psi_h^{k-1})- \alpha_k a(u_h^k, \Pi_h v) + \alpha_k F(\Pi_h v)
\end{align}
for any $v \in V$.
The surjectivity of $B: V \rightarrow Q$ and the continuity of the bilinear form $a$ and operator $\Pi_h$ give
\begin{align} \label{eq:stab_appendix_0}
\beta \|\psi_h^k\|_{Q'} \leq \|B' \psi_h^k\|_{V'} \leq \|\Pi_h\| (\|B' \psi_h^{k-1}\|_{V'} + \alpha_k M \|u_h^k\|_V + \alpha_k \|F\|_{V'}), 
\end{align}
where $\beta > 0$ is the same constant appearing in the LBB condition~\cref{eq:inf_sup_continuous}. 
Using the above bound in \cref{eq:stab_bound_0} and appropriate applications of Young's inequality shows that there exists a constant $M_{k}$ (depending only on $\alpha_k$, $M$, $\|F\|_{V'}$, $\|\Pi_h\|$, $\|\phi_1\|_{Q}$, $\beta$, and $c_2$) such that 
\begin{align}
\nu \|u_h^k\|_{V}^2 + \nu_1 \underline{\phi_0}\|\psi_h^k\|_{L^1(\Omega_d)} \leq M_k +  \|B' \psi_h^{k-1}\|^2_{V'}.   \label{eq:stab_0_both}
\end{align}
Selecting $k =1$ in the above estimate, recalling that $\|\Pi_h v\|_V \lesssim \|v\|_V$, and using the assumption that $\|B '\psi_h^{0}\|_{V'} \lesssim 1$ for all $v \in V$ implies that there is a constant $C_1$ (independent of $h$ and $\alpha_k$ for $k >1$) such that
\begin{align}
\nu \|u_h^1\|^2_{V} \leq M_1 + \|B' \psi_h^{0}\|_{V'}^2 := C_1^2. 
\end{align}
To obtain a bound on $\|u_h^\ell\|_V$ for $\ell >1$,  we use the energy dissipation property (i.e., \Cref{lemma:energy_dissipation}): 
\begin{equation}
\frac12 a(u_h^\ell, u_h^\ell) - F(u_h^\ell)= E(u_h^\ell)  \leq E(u_h^1). 
\end{equation}
 Using \cref{eq:coercivity_a}, we obtain that 
 \begin{align}
\frac\nu2 \|u_h^\ell\|_{V}^2 
& \leq M\|u_h^1\|_{V}^2 + \|F\|_{V'}(\|u_h^1\|_{V} + \|u_h^\ell\|_V) \\ \nonumber
& \leq M C_1^2 + \|F\|_{V'} C_1 + \frac1\nu \|F\|_{V'}^2 + \frac\nu4 \|u_h^\ell\|_{V}^2.  
 \end{align}
We can then conclude that 
\begin{equation} \label{eq:bound_stab_u}
\frac{\nu}{4}\|u_h^\ell\|_{V}^2 \leq  M C_1^2 + \|F\|_{V'} C_1 + \frac1\nu \|F\|_{V'}^2  : = C_2^2.
\end{equation}
Employing again that $B: V \rightarrow Q$ is surjective, we obtain that 
\begin{equation} \label{eq:bound_stab_lambda}
\beta \|\lambda_h^\ell\|_{Q'} \leq \|B' \lambda_h^\ell\|_{V'} \leq M \|u_h^\ell\|_V + \|F\|_{V'},
\end{equation}
where we used \eqref{eq:lvpp_g_0} and similar arguments to 
\eqref{eq:stab_appendix_0}. The bounds \cref{eq:bound_stab_u} and \cref{eq:bound_stab_lambda} yield the first two terms in~\cref{eq:stability_uh}. 

To show the bound on $\|\psi_h^\ell\|_{Q'}$, we define the weighted averages  $\overline{u}_h^\ell = \sum_{k=1}^{\ell} \alpha_{k} u_h^{k} / \sum_{k=1}^{\ell} \alpha_{k}$ and sum \cref{eq:lvpp_g_0} from $k = 1$ to $k =\ell$. This gives 
 \begin{align}
     a(\overline{u}_h^\ell, v_h) + \frac{1}{\sum_{k=1}^{\ell} \alpha_{k}} b(v_h, \psi_h^\ell) = F(v_h) +  \frac{1}{\sum_{k=1}^{\ell} \alpha_{k}} b(v_h, \psi_h^0) ~\fa v_h \in V_h. 
 \end{align}
 We now observe that for any $v \in V$, 
 \begin{align}
   b(v,\psi_h^\ell) =  b(\Pi_h v, \psi_h^\ell)  = \left(\sum_{k=1}^\ell \alpha_k \right) (F(\Pi_h v) - a(\overline{u}_h^\ell, \Pi_h v) ) + b(\Pi_hv , \psi_h^0). 
    \end{align}
With similar arguments as before and the observation that $\|\overline u_h^\ell\|_{V} \leq 2\nu^{-1/2}C_2$, this implies that
 \begin{equation}
    \|B' \psi_h^\ell\|_{V'}  \leq C_3 \left( \sum_{k=1}^\ell \alpha_k +1 \right) 
    ~\fa \ell \geq 1, \label{eq:stabV'_psi}
 \end{equation}
 where $C_3 =(\|F\|_{V'} + 2 \nu^{-1/2}M C_2 + \|B' \psi_h^0\|_{V'}) \|\Pi_h\|$. To derive the bound on $\|\psi_h^k\|_{L^1(\Omega_d)}$, we substitute \cref{eq:stabV'_psi} into \cref{eq:stab_bound_0}.
\end{proof}

\bibliographystyle{plain}
\bibliography{references}

\begin{thebibliography}{10}

\bibitem{allgower1986mesh}
Eugene~L Allgower, Klaus B{\"o}hmer, FA~Potra, and WC~Rheinboldt.
\newblock A mesh-independence principle for operator equations and their
  discretizations.
\newblock {\em SIAM Journal on Numerical Analysis}, 23(1):160--169, 1986.

\bibitem{amari2016information}
Shun-ichi Amari.
\newblock {\em Information Geometry and its Applications}, volume 194.
\newblock Springer, 2016.

\bibitem{antil2023alesqp}
Harbir Antil, Drew~P Kouri, and Denis Ridzal.
\newblock {ALESQP}: {A}n augmented {L}agrangian equality-constrained {SQP}
  method for optimization with general constraints.
\newblock {\em SIAM Journal on Optimization}, 33(1):237--266, 2023.

\bibitem{attouch2014variational}
Hedy Attouch, Giuseppe Buttazzo, and G{\'e}rard Michaille.
\newblock {\em Variational analysis in {S}obolev and $BV$ spaces:
  {A}pplications to {PDE}s and optimization}.
\newblock SIAM, 2014.

\bibitem{babuvska1973finite}
Ivo Babu{\v{s}}ka.
\newblock The finite element method with penalty.
\newblock {\em Mathematics of {C}omputation}, 27(122):221--228, 1973.

\bibitem{bartels2015numerical}
S{\"o}ren Bartels.
\newblock {\em Numerical methods for nonlinear partial differential equations},
  volume~47.
\newblock Springer, 2015.

\bibitem{bauschke1997legendre}
Heinz~H Bauschke and Jonathan~M Borwein.
\newblock Legendre functions and the method of random {B}regman projections.
\newblock {\em Journal of Convex Analysis}, 4(1):27--67, 1997.

\bibitem{boffi2013mixed}
Daniele Boffi, Franco Brezzi, and Michel Fortin.
\newblock {\em Mixed Finite Element Methods and Applications}, volume~44.
\newblock Springer, 2013.

\bibitem{brenner2008mathematical}
Susanne~C Brenner and L~Ridgway Scott.
\newblock {\em The Mathematical Theory of Finite Element Methods}, volume~3.
\newblock Springer, 2008.

\bibitem{Brezis1971}
H.~Brezis and M.~Sibony.
\newblock Equivalence de deux inéquations variationnelles et applications.
\newblock {\em Archive for Rational Mechanics and Analysis}, 41(4):254–265,
  January 1971.

\bibitem{brezis2010functional}
Haim Brezis.
\newblock {\em Functional Analysis, Sobolev Spaces and Partial Differential
  Equations}, volume~2.
\newblock Springer, 2010.

\bibitem{bueler2024full}
Ed~Bueler and Patrick~E Farrell.
\newblock A full approximation scheme multilevel method for nonlinear
  variational inequalities.
\newblock {\em SIAM Journal on Scientific Computing}, 46(4):A2421--A2444, 2024.

\bibitem{burachik1998generalized}
Regina~S Burachik and Alfredo~N Iusem.
\newblock A generalized proximal point algorithm for the variational inequality
  problem in a {H}ilbert space.
\newblock {\em SIAM journal on Optimization}, 8(1):197--216, 1998.

\bibitem{carstensen2002each}
Carsten Carstensen and S{\"o}ren Bartels.
\newblock Each averaging technique yields reliable a posteriori error control
  in {FEM} on unstructured grids. {P}art {I}: Low order conforming,
  nonconforming, and mixed {FEM}.
\newblock {\em Mathematics of Computation}, 71(239):945--969, 2002.

\bibitem{chen1993convergence}
Gong Chen and Marc Teboulle.
\newblock Convergence analysis of a proximal-like minimization algorithm using
  {B}regman functions.
\newblock {\em SIAM Journal on Optimization}, 3(3):538--543, 1993.

\bibitem{Chouly2023}
Franz Chouly, Patrick Hild, and Yves Renard.
\newblock {\em Finite Element Approximation of Contact and Friction in
  Elasticity}, volume~48 of {\em Advances in Mechanics and Mathematics}.
\newblock Birkhäuser Cham, Cham, Switzerland, 2023.

\bibitem{ciarlet2002finite}
Philippe~G Ciarlet.
\newblock {\em The Finite Element Method for Elliptic Problems}.
\newblock SIAM, 2002.

\bibitem{ciarlet2013linear}
Philippe~G Ciarlet.
\newblock {\em Linear and Nonlinear Functional Analysis with Applications},
  volume 130.
\newblock SIAM, 2013.

\bibitem{clement1975approximation}
Ph~Cl{\'e}ment.
\newblock Approximation by finite element functions using local regularization.
\newblock {\em Revue fran{\c{c}}aise D'automatique, Informatique, Recherche
  Op{\'e}rationnelle. Analyse Num{\'e}rique}, 9(R2):77--84, 1975.

\bibitem{demkowicz2023mathematical}
Leszek~F Demkowicz.
\newblock {\em Mathematical Theory of Finite Elements}.
\newblock SIAM, 2023.

\bibitem{dokken2025latent}
J{\o}rgen~S Dokken, Patrick~E Farrell, Brendan Keith, Ioannis Papadopoulos, and
  Thomas~M Surowiec.
\newblock The latent variable proximal point algorithm for variational problems
  with inequality constraints.
\newblock {\em Computer Methods in Applied Mechanics and Engineering}, 2025.

\bibitem{zenodo:proximalgalerkin}
Jørgen~S. Dokken, Patrick~E. Farrell, Brendan Keith, Ioannis~P.A.
  Papadopoulos, and Thomas~M. Surowiec.
\newblock The latent variable proximal point algorithm for variational problems
  with inequality constraints, June 2025.
\newblock Version 0.4.1. \url{https://doi.org/10.5281/zenodo.15723819}.

\bibitem{eckstein1998approximate}
Jonathan Eckstein.
\newblock Approximate iterations in {B}regman-function-based proximal
  algorithms.
\newblock {\em Mathematical Programming}, 83(1):113--123, 1998.

\bibitem{ern2004theory}
Alexandre Ern and Jean-Luc Guermond.
\newblock {\em Theory and Practice of Finite Elements}, volume 159.
\newblock Springer, 2004.

\bibitem{ern2017finite}
Alexandre Ern and Jean-Luc Guermond.
\newblock Finite element quasi-interpolation and best approximation.
\newblock {\em ESAIM: Mathematical Modelling and Numerical Analysis},
  51(4):1367--1385, 2017.

\bibitem{ern2021finite1}
Alexandre Ern and Jean-Luc Guermond.
\newblock {\em Finite Elements I: Approximation and Interpolation}, volume~72
  of {\em Texts in Applied Mathematics}.
\newblock Springer, Cham, 2021.

\bibitem{ern2021finite2}
Alexandre Ern and Jean-Luc Guermond.
\newblock {\em Finite elements II: Galerkin Approximation, Elliptic and Mixed
  PDEs}, volume~73 of {\em Texts in Applied Mathematics}.
\newblock Springer, Cham, 2021.

\bibitem{evans2018measure}
Lawrence~C. Evans and Ronald~F. Gariepy.
\newblock {\em Measure Theory and Fine Properties of Functions}.
\newblock CRC Press, 2nd edition, 2024.

\bibitem{fu2025proximal}
Guosheng Fu, Brendan Keith, Dohyun Kim, Rami Masri, and Will Pazner.
\newblock The proximal {G}alerkin {M}ethod for nonsymmetric variational
  inequalities.
\newblock Manuscript in preparation, 2025.

\bibitem{fu2024locally}
Guosheng Fu, Brendan Keith, and Rami Masri.
\newblock A locally-conservative proximal {G}alerkin method for pointwise bound
  constraints.
\newblock {\em arXiv preprint arXiv:2412.21039}, 2024.

\bibitem{Fuhrer+2024+363+378}
Thomas F{\"u}hrer.
\newblock On a mixed {FEM} and a {FOSLS} with {$H^{-1}$} loads.
\newblock {\em Computational Methods in Applied Mathematics}, 24(2):363--378,
  2024.

\bibitem{Glowinski1984}
Roland Glowinski.
\newblock {\em Numerical Methods for Nonlinear Variational Problems}.
\newblock Springer Berlin Heidelberg, 1984.

\bibitem{glowinski1989augmented}
Roland Glowinski and Patrick Le~Tallec.
\newblock {\em Augmented Lagrangian and Operator-Splitting Methods in Nonlinear
  Mechanics}, volume~9 of {\em Studies in Applied and Numerical Mathematics}.
\newblock SIAM, Philadelphia, PA, 1989.

\bibitem{graser2009multigrid}
Carsten Gr{\"a}ser and Ralf Kornhuber.
\newblock Multigrid methods for obstacle problems.
\newblock {\em Journal of Computational Mathematics}, 27(1):1--44, 2009.

\bibitem{gustafsson2017finite}
Tom Gustafsson, Rolf Stenberg, and Juha Videman.
\newblock On finite element formulations for the obstacle problem--mixed and
  stabilised methods.
\newblock {\em Computational Methods in Applied Mathematics}, 17(3):413--429,
  2017.

\bibitem{hintermuller2002primal}
Michael Hinterm{\"u}ller, Kazufumi Ito, and Karl Kunisch.
\newblock The primal-dual active set strategy as a semismooth {N}ewton method.
\newblock {\em SIAM Journal on Optimization}, 13(3):865--888, 2002.

\bibitem{hintermuller2006feasible}
Michael Hinterm{\"u}ller and Karl Kunisch.
\newblock Feasible and noninterior path-following in constrained minimization
  with low multiplier regularity.
\newblock {\em SIAM Journal on Control and Optimization}, 45(4):1198--1221,
  2006.

\bibitem{hintermuller2004mesh}
Michael Hinterm{\"u}ller and Michael Ulbrich.
\newblock A mesh-independence result for semismooth {N}ewton methods.
\newblock {\em Mathematical Programming}, 101:151--184, 2004.

\bibitem{ito1990augmented}
Kazufumi Ito and Karl Kunisch.
\newblock An augmented {L}agrangian technique for variational inequalities.
\newblock {\em Applied Mathematics \& Optimization}, 21(1):223--241, 1990.

\bibitem{keith2023proximal}
Brendan Keith and Thomas~M Surowiec.
\newblock Proximal {G}alerkin: A structure-preserving finite element method for
  pointwise bound constraints.
\newblock {\em Foundations of Computational Mathematics}, pages 1--97, 2024.

\bibitem{kunisch2004total}
Karl Kunisch and Michael Hinterm{\"u}ller.
\newblock Total bounded variation regularization as a bilaterally constrained
  optimization problem.
\newblock {\em SIAM Journal on Applied Mathematics}, 64(4):1311--1333, 2004.

\bibitem{nocedal2006numerical}
Jorge Nocedal and Stephen~J Wright.
\newblock {\em Numerical Optimization}.
\newblock Springer, second edition, 2006.

\bibitem{nochetto2002positivity}
Ricardo Nochetto and Lars Wahlbin.
\newblock Positivity preserving finite element approximation.
\newblock {\em Mathematics of Computation}, 71(240):1405--1419, 2002.

\bibitem{papadopoulos2024hierarchical}
Ioannis Papadopoulos.
\newblock Hierarchical proximal {G}alerkin: a fast $hp$-{FEM} solver for
  variational problems with pointwise inequality constraints.
\newblock {\em arXiv preprint arXiv:2412.13733}, 2024.

\bibitem{rockafellar1967conjugates}
Ralph~Tyrrell Rockafellar.
\newblock Conjugates and {L}egendre transforms of convex functions.
\newblock {\em Canadian Journal of Mathematics}, 19:200--205, 1967.

\bibitem{RTRockafellar_1970}
Ralph~Tyrrell Rockafellar.
\newblock {\em Convex Analysis}.
\newblock Princeton University Press, Princeton, 1970.

\bibitem{rudin1992nonlinear}
Leonid~I Rudin, Stanley Osher, and Emad Fatemi.
\newblock Nonlinear total variation based noise removal algorithms.
\newblock {\em Physica D: Nonlinear Phenomena}, 60(1-4):259--268, 1992.

\bibitem{scholz1984numerical}
Reinhard Scholz.
\newblock Numerical solution of the obstacle problem by the penalty method.
\newblock {\em Computing (Wien. Print)}, 32(4):297--306, 1984.

\bibitem{schwedes2017mesh}
Tobias Schwedes, David~A Ham, Simon~W Funke, and Matthew~D Piggott.
\newblock {\em Mesh dependence in {PDE}-Constrained Optimisation}.
\newblock Springer, 2017.

\bibitem{scott1990finite}
L~Ridgway Scott and Shangyou Zhang.
\newblock Finite element interpolation of nonsmooth functions satisfying
  boundary conditions.
\newblock {\em Mathematics of computation}, 54(190):483--493, 1990.

\bibitem{tartar2007introduction}
Luc Tartar.
\newblock {\em An Introduction to {S}obolev spaces and Interpolation spaces},
  volume~3.
\newblock Springer Science \& Business Media, 2007.

\bibitem{TWTing_1969}
Tsuan~Wu Ting.
\newblock Elastic-plastic torsion of convex cylindrical bars.
\newblock {\em Journal of Mathematics and Mechanics}, 19(6):531--551, 1969.

\bibitem{verfurth1994posteriori}
R{\"u}diger Verf{\"u}rth.
\newblock A posteriori error estimation and adaptive mesh-refinement
  techniques.
\newblock {\em Journal of Computational and Applied Mathematics},
  50(1-3):67--83, 1994.

\bibitem{weiser2005asymptotic}
Martin Weiser, Anton Schiela, and Peter Deuflhard.
\newblock Asymptotic mesh independence of {N}ewton's method revisited.
\newblock {\em SIAM Journal on Numerical Analysis}, 42(5):1830--1845, 2005.

\bibitem{yan2001posteriori}
Ningning Yan.
\newblock A posteriori error estimators of gradient recovery type for elliptic
  obstacle problems.
\newblock {\em Advances in Computational Mathematics}, 15:333--361, 2001.

\end{thebibliography}

\end{document}